\documentclass[letterpaper, 11pt,  reqno]{amsart}
\usepackage{mathtools}
\usepackage{comment}
\usepackage{bbm}
\usepackage{amsmath,amssymb,amscd,amsthm,amsxtra, esint}
\usepackage{amsfonts,latexsym}
\usepackage{eucal}
\usepackage{mathabx}
\usepackage{caption}
\usepackage{subcaption}
\usepackage{mathtools}

\usepackage{lineno}

\usepackage{xcolor}

\usepackage[margin=1.2in,marginparwidth=1.5cm, marginparsep=0.5cm]{geometry}

\usepackage[implicit=true]{hyperref}
\allowdisplaybreaks[2]

\sloppy

\hfuzz  = 0.5cm 




\usepackage{color}

\definecolor{gr}{rgb}   {0.,   0.69,   0.23 }
\definecolor{bl}{rgb}   {0.,   0.5,   1. }
\definecolor{mg}{rgb}   {0.85,  0.,    0.85}
\definecolor{yl}{rgb}   {0.8,  0.7,   0.}
\definecolor{or}{rgb}  {0.7,0.2,0.2}

\newtheorem{theorem}{Theorem} [section]

\newtheorem{lemma}[theorem]{Lemma}
\newtheorem{proposition}[theorem]{Proposition}
\newtheorem{remark}[theorem]{Remark}

\newtheorem{corollary}[theorem]{Corollary}

\newtheorem{conjecture}{Conjecture}[section]


\DeclareMathOperator{\med}{med}





\newcommand{\noi}{\noindent}

\newcommand{\R}{\mathbb{R}}
\newcommand{\C}{\mathcal{C}}

\newcommand{\al}{\alpha}
\newcommand{\be}{\beta}

\newcommand{\eps}{\varepsilon}

\newcommand{\cj}{\overline}

\newcommand{\les}{\lesssim}
\newcommand{\ges}{\gtrsim}



\newcommand{\N}{\mathbb{N}}




\usepackage{tikz}

\usetikzlibrary{shapes.misc}
\usetikzlibrary{shapes.symbols}
\usetikzlibrary{shapes.geometric}
\tikzset{
	dot/.style={circle,fill=black,draw=black,inner sep=0pt,minimum size=0.5mm},
	>=stealth,
	}
\tikzset{
	ddot/.style={circle,fill=white,draw=black,inner sep=0pt,minimum size=0.8mm},
	>=stealth,
	}


\tikzset{decision/.style={ 
        draw,
        diamond,
        aspect=1.5
    }}

\tikzset{dia2/.style
={diamond,fill=white,draw=black,inner sep=0pt,minimum size=1mm},
	>=stealth,
	}


\mathtoolsset{showonlyrefs}

\numberwithin{equation}{section}
\numberwithin{theorem}{section}


\usepackage{color}

\newtheorem{thm}{Theorem}[section]
\newtheorem{lem}[thm]{Lemma}
\newtheorem{prop}[thm]{Proposition}

\newcommand{\abrac}[1]{\left\langle #1 \right\rangle}
\theoremstyle{definition}

\theoremstyle{remark}

\def\R{{\mathbb R}}

\def\N{{\mathbb N}}

\def\C{{\mathbb C}}

\newcommand{\del}{\partial}

\newcommand{\Nmax}{N_{\text{max}}}
\newcommand{\Nmed}{N_{\text{med}}}
\newcommand{\Nmean}{N_{\text{mean}}}
\newcommand{\Nmin}{N_{\text{min}}}
\newcommand{\Lmax}{L_{\text{max}}}
\newcommand{\Lmed}{L_{\text{med}}}
\newcommand{\Lmean}{L_{\text{mean}}}
\newcommand{\Lmin}{L_{\text{min}}}

\renewcommand{\Tilde}{\widetilde}

\numberwithin{equation}{section}

\begin{document}
\title{On the well-posedness of the initial value problem for the MMT model}

\author{Mahendra Panthee}
\address{Department of Mathematics, University of Campinas, Brazil}
\email{mpanthee@unicamp.br}
\author{James Patterson}
\address{School of Mathematics, University of Birmingham, UK}
\email{jxp277@student.bham.ac.uk}
\author{Yuzhao Wang}
\address{School of Mathematics, University of Birmingham, UK}
\email{y.wang.14@bham.ac.uk}

\begin{abstract}
This work investigates the initial value problem (IVP) for the two-parameter family of dispersive wave equations known as the Majda-McLaughlin-Tabak (MMT) model, 
which arises in the weak turbulence theory of random waves. 
The MMT model can be viewed as a derivative nonlinear Schrödinger (dNLS) equation where both the nonlinearity and dispersion involve nonlocal fractional derivatives.

The purpose of this study is twofold: first, to establish a sharp well-posedness theory for the MMT model; 
and second, to identify the critical threshold for the derivative in the nonlinearity relative to the dispersive order required to ensure well-posedness. 
As a by-product, we establish sharp well-posedness for non-local fractional dNLS equations; notably, our results resolve the regularity endpoint left open in \cite{WE-22}.
 \end{abstract}
 
 \maketitle
 
 \section{Introduction}
 In this work, we consider the initial value problem (IVP) for the  following two-parameter family of dispersive wave equation, known as the Majda--McLaughlin--Tabak (MMT) model
\begin{equation} \label{MMTEquation}
\begin{cases}
i\del_t u + (-\partial_x^2)^{\frac{\alpha}{2}}u =  \kappa D_x^\beta(|D_x^\beta u|^2 D_x^\beta u ),\\
u(x, 0) = u_0(x),
\end{cases} \qquad x \in \R, t\in\R,
\end{equation}
where $ u= u(x,t)$ is a complex function,  $\alpha>0$, $\kappa \in \{-1,1\}$, $\beta\in \R$ and the operator $D_x^\beta$ is defined via Fourier transform $\widehat{D_x^{\beta}u}(\xi) = |\xi|^{\beta}\widehat{u}(\xi)$. 
The MMT model was originally proposed in \cite{MMT-97} as a simplified Hamiltonian model for one-dimensional wave,
where $\alpha>0$ controls the dispersion relation $\omega(k)=|k|^{\alpha}$ and $\beta\in\R$ modulates the strength and locality of the nonlinearity.  
This two-parameter family interpolates between the classical cubic nonlinear Schrödinger equation (when $\alpha=2$, $\beta=0$) and fractional or nonlocal models relevant to physical settings.  
The model was introduced to test the predictions of weak wave turbulence (WT) theory on a computationally tractable system with a tunable dispersion and nonlinearity~\cite{ZDP-04,ZGPD-01}.
The model \eqref{MMTEquation} is a Hamiltonian system that possesses energy 
\begin{equation}\label{energy}
E(u) := \int \Big(|D_x^{\frac{\alpha}{2}}u|^2 - \frac\kappa2 |D_x^{\beta}u|^4\Big)dx = E(u_0)
\end{equation}
and mass 
\begin{equation}\label{mass}
M(u) := \int |u|^2 dx = M(u_0),
\end{equation}
conservation laws, see \cite{MMT-97}.
These Hamiltonian structures provide a natural $H^{\alpha/2}$ control.

The linear part $(-\partial_x^2)^{\frac\alpha2} u$ generalises the Schrödinger-type dispersion: 
for instance, $\alpha = 2$ reproduces the standard cubic nonlinear Schrödinger scaling, 
while $\alpha = \tfrac{1}{2}$ mimics the deep-water gravity wave dispersion $\omega \sim |k|^{1/2}$.
For $\alpha>1$, the convexity of the dispersion relation suppresses exact four-wave resonances, which tends to stabilise the dynamics and favours local existence; for $\alpha<1$, nontrivial resonances appear, leading to stronger energy transfer and the possibility of wave collapse~\cite{ZGPD-01}.  
These phenomena underlie the ``MMT spectrum'' observed in simulations, which differs from the standard Kolmogorov–Zakharov power laws and reflects the coexistence of weak turbulence with intermittent coherent events~\cite{ZDP-04}.
For $1 < \alpha < 2$, which is the regime of interest in this work, 
the dispersion is supercritical compared to water waves but still subquadratic, 
so that the group velocity remains highly frequency-dependent and the model retains strong dispersive smoothing. 
In particular, unlike the standard NLS case $\alpha = 2$, this intermediate fractional dispersion is nonlocal 
and leads to a delicate balance between dispersive effects and nonlinear resonances; see \cite{CHKL-15}.

The parameter $\beta$ determines whether the cubic interaction is regularising or singular:  
when $\beta=0$, one obtains the standard cubic nonlinearity and the equation behaves similarly to a fractional NLS, for which classical dispersive estimates and Hamiltonian conservation may yield local and global well-posedness in $H^{s}$; see \cite{CHKL-15}.  
Negative $\beta$ introduces additional smoothing operators, weakening nonlinear effects, whereas $\beta>0$ amplifies high-frequency components through derivative-type interactions such as
\[
D^{\beta}u \cdot D^{\beta} \cj u \cdot D^{\beta}u,
\]
creating a derivative-losing regime that is significantly more difficult to control analytically than that of \cite{CHKL-15}.  

In recent years, the Hamiltonian model \eqref{MMTEquation} has drawn considerable attention from the research community; see, for instance, \cite{CMMT-99}, \cite{GLZ-25}, \cite{SW-21}, \cite{ZDP-04}, \cite{ZGPD-01}, and the references therein. In particular, the authors in \cite{GLZ-25} studied the IVP for the kinetic wave equation (the statistical analog of the MMT model) 
\begin{equation*}
\partial_t n(t, k) = \mathcal{C}(n(t))(k),
\end{equation*}
with the collision operator given by
\begin{equation*}
\mathcal{C}(n)(k) = \int_{\R^3}K(k_1,k_2,k_3, k,n_1, n_2, n_3, n_k)\delta(k_1+k_2-k_3-k)\delta(\omega_1+\omega_2-\omega_3-\omega) dk_1dk_2dk_3,
\end{equation*}
where
$$K(k_1,k_2,k_3, k,n_1, n_2, n_3, n_k):=|k_1k_2k_3k|^{2\beta}(n_1n_2n_3+n_1n_2n_k-n_1n_3n_k-n_2n_3n_k),$$
with
\begin{equation*}
\begin{split}
&n_k = n(t, k), \quad n_i= n(t, k_i), \quad i= 1, 2, 3,\\
&\omega = |k|^{\alpha}, \quad \omega_i = |k_i|^{\alpha}, \quad i= 1, 2, 3,
\end{split}
\end{equation*}
and obtained the local well-posedness result.
Their analysis marks the first step towards a rigorous understanding of the kinetic formulation of wave turbulence and indicates that even in one dimension, the MMT family possesses rich analytical structure and subtle regularising effects.

\subsection{Main results}

Well-posedness of IVPs for fractional nonlinear Schrödinger (fNLS) equations has been extensively studied in the literature. For instance, in \cite{HS-15} the authors investigated the fNLS equation introduced in \cite{Las-02} with a generalized nonlinearity and established local well-posedness via Strichartz-type estimates. In \cite{CHKL-15}, the case of cubic nonlinearity was considered, and sharp local well-posedness was obtained in Bourgain spaces. To achieve this, the authors derived trilinear estimates in Bourgain spaces by employing Tao’s multiplier technique from \cite{TT-01}. Regarding global well-posedness in Sobolev spaces of negative regularity, we refer to the recent work \cite{BLLZ-23}. We also highlight several other contributions addressing well-posedness, ill-posedness and stability of fNLS and related models, see for example \cite{CHHO-17}, \cite{CHHO-14}, \cite{CHHO-13}, \cite{CHKL-14}, \cite{CHKL-13}, \cite{CP-18}, \cite{DET-16}, \cite{GH-11}, \cite{IP-14}, \cite{WE-22}, and the references therein.

To the best of our knowledge, there are no existing results on the well-posedness of the IVP \eqref{MMTEquation} in its original form, particularly for the regime $\be > 0$. 
In this work, our aim is to address this gap by studying the IVP \eqref{MMTEquation} for $1<\alpha\leq 2$ with initial data in the classical Sobolev spaces $H^s(\R)$. The MMT model enjoys the scaling property, that is, if $u(x,t)$ is a solution to the IVP \eqref{MMTEquation} then so is $u^{\lambda}(x,t) = \lambda^{\frac{4\beta-\alpha}{2}}u(\frac{x}{\lambda}, \frac{t}{\lambda^{\alpha}})$ for $\lambda>0$. Observe that,
\begin{equation*}
\|u^{\lambda}\|_{\dot{H}^s(\R)} = \lambda^{\frac{4\beta+1-\alpha}{2}-s}\|u\|_{\dot{H}^s(\R)}.
\end{equation*}
Hence, the critical Sobolev regularity index up to which one may expect local well-posedness for the IVP~\eqref{MMTEquation} is given by
\begin{equation*}
s_c := 2\beta + \frac{1 - \alpha}{2}.
\end{equation*}
More precisely, we expect that the equation~\eqref{MMTEquation} is well-posed in the subcritical or critical regime, namely in $H^s(\R)$ for $s \ge s_c$, 
while it becomes ill-posed in the supercritical regime $H^s(\R)$ for $s < s_c$.  
In what follows, we show that in many cases the actual threshold of well-posedness is higher than the formal scaling prediction $s_c$.

\subsection{Well-posedness}\label{SUB:well}

In this section, we establish a series of local well-posedness results for the MMT equation~\eqref{MMTEquation}.  
Our approach is based on deriving a trilinear estimate in the associated Bourgain spaces, employing Tao’s multilinear multiplier method~\cite{TT-01}, and applying the contraction mapping principle.  
We will demonstrate the sharpness of some obtained regularity thresholds in both the Sobolev index and the parameter relations involving~$\alpha$ and~$\beta$.

\subsubsection{Positive regularity case.}
We first consider the case $s \ge 0$.  
By adapting the techniques developed in~\cite{TT-01,CHKL-15}, we establish the key trilinear estimate required for the contraction argument.  
A crucial observation is that, on the hyperplane $\xi_1+\xi_2+\xi_3+\xi_4 = 0$, the inequality
\begin{equation}\label{est-i1}
\langle\xi_4\rangle^s \leq \sum_{j=1}^3 \langle\xi_j\rangle^s, \qquad s \ge 0,
\end{equation}
allows us to reduce the trilinear estimate to a bilinear one.  
This leads to the following result.

\begin{theorem}\label{Th-loc-1} 
Let $1 < \alpha \le 2$ and $-\tfrac{1}{4} < \beta < \tfrac{\alpha - 1}{2}$.  
Then, for any given data $u_0 \in H^s(\R)$, the IVP~\eqref{MMTEquation} is locally well-posed in $H^s (\R)$ for $s \ge \max\left\{ 0,\, 2\beta + \tfrac{2 - \alpha}{4} \right\}$.
\end{theorem}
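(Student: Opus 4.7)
The plan is to set up the Duhamel formulation of \eqref{MMTEquation} and run a contraction argument in the Bourgain space $X^{s,b}$ adapted to the linear symbol $\omega(\xi) = |\xi|^{\alpha}$, with $b > 1/2$ chosen close to $1/2$. Once the standard linear/energy estimates in $X^{s,b}$ are in place, the entire problem reduces to a single trilinear estimate of the form
\begin{equation*}
\bigl\| D_x^{\beta}\bigl( D_x^{\beta} u_1 \cdot \overline{D_x^{\beta} u_2} \cdot D_x^{\beta} u_3 \bigr) \bigr\|_{X^{s,\,b-1}} \lesssim \prod_{j=1}^{3} \|u_j\|_{X^{s,b}},
\end{equation*}
which I would prove by duality and Plancherel.

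Writing the estimate against a test function $f_4 \in L^2_{\xi,\tau}$ and letting $\xi_4 = -(\xi_1 - \xi_2 + \xi_3)$, $\tau_4 = -(\tau_1 - \tau_2 + \tau_3)$, the claim becomes a bound on the multilinear multiplier
\begin{equation*}
\int \frac{\langle \xi_4\rangle^{s} |\xi_4|^{\beta}\prod_{j=1}^{3}|\xi_j|^{\beta}}{\prod_{j=1}^{3}\langle \xi_j\rangle^{s}\,\langle \tau_j \pm \omega(\xi_j)\rangle^{b}\,\langle \tau_4 + \omega(\xi_4)\rangle^{1-b}} \prod_{j=1}^{4} f_j(\xi_j,\tau_j)\, d\xi\, d\tau.
\end{equation*}
Applying \eqref{est-i1} splits the integral into three symmetric pieces, each of which cancels one factor of $\langle \xi_j\rangle^{s}$ in the denominator, thereby reducing the trilinear form to three essentially bilinear $X^{s,b}$ estimates. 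I would then invoke Tao's $[k,Z]$-multiplier method \cite{TT-01} to dyadically decompose in frequency and modulation: $|\xi_j| \sim N_j$ and $\langle \tau_j \pm \omega(\xi_j)\rangle \sim L_j$. The crucial input is the resonance identity
\begin{equation*}
-\omega(\xi_1) + \omega(\xi_2) - \omega(\xi_3) + \omega(\xi_4) = \sum_{j=1}^{3}\bigl(\tau_j \mp \omega(\xi_j)\bigr) + \bigl(\tau_4 + \omega(\xi_4)\bigr),
\end{equation*}
which forces $L_{\max} \gtrsim |\Omega|$. Using the strict convexity of $\omega(\xi)=|\xi|^{\alpha}$ for $1<\alpha\le 2$, standard case analysis on the order of $N_1,\dots,N_4$ yields, in the typical non-degenerate configuration, a lower bound of the form $|\Omega| \gtrsim N_{\max}^{\alpha - 1} |\xi_i - \xi_j|\,N_{\min}$ for a suitable pair, which provides the dispersive smoothing used to offset the $D_x^\beta$ derivative losses. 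Bilinear $L^2_{x,t}$-Strichartz or convolution estimates on thickened paraboloids then close each dyadic piece.

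The principal obstacle is the \emph{high-high-high to high} interaction, where $N_1 \sim N_2 \sim N_3 \sim N_4 \sim N \gg 1$ and the frequencies are nearly parallel so that $|\Omega|$ may be as small as the threshold allows. There, the $D_x^{\beta}$ factors contribute an overall gain of $N^{4\beta}$ in the numerator, while the net regularity in the denominator contributes only $N^{2s}$ (after using \eqref{est-i1} to cancel one $\langle\xi_j\rangle^s$), and the best modulation distribution extracts roughly half a power of $\Omega$, giving a gain of $N^{(\alpha-1)/2}$ together with a further $N^{1/2}$ from the bilinear $L^2_{x,t}$ estimate. Balancing powers leads exactly to the condition $4\beta \le 2s + (\alpha - 1)/2 + 1/2$, i.e.\ $s \ge 2\beta + (2 - \alpha)/4$, matching the theorem. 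The assumption $\beta < (\alpha-1)/2$ guarantees that this threshold lies below $1/2$ and that an admissible $b > 1/2$ close to $1/2$ exists; the assumption $\beta > -1/4$ prevents low-frequency singularities from the outer $D_x^\beta$.

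With the trilinear estimate in hand, the contraction principle applied to the Duhamel map on a closed ball of $X^{s,b}_{T}$ (restricted to a small time interval $[-T,T]$ through the usual $T^{\theta}$ gain for $b$ slightly above $1/2$) gives the existence, uniqueness, continuous dependence, and persistence of regularity asserted in Theorem~\ref{Th-loc-1}, with $T$ depending only on $\|u_0\|_{H^s}$.
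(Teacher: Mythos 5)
Your proposal is correct and follows essentially the same route as the paper: Duhamel plus contraction in $X^{s,b}$, duality to a $[4;\R\times\R]$-multiplier bound, the reduction via \eqref{est-i1} to bilinear estimates, and Tao's dyadic multiplier method with the resonance/counting estimates to close each block, with the high-frequency balancing producing exactly the threshold $s \ge 2\beta + \tfrac{2-\alpha}{4}$. The only cosmetic difference is that the paper attributes the constraint $\beta < \tfrac{\alpha-1}{2}$ to the high--low interaction cases of the bilinear estimate (rather than to the choice of $b$) and handles $\beta > -\tfrac14$ via Tao's smoothing lemma for the singular symbol $|\xi|^{\beta}$, but these match your heuristics in substance.
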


We remark that the restriction on $\beta$ is sharp in view of Theorem~\ref{Th-ill-Line} below. In particular, if $\beta<-\frac14$ or $\beta>\frac{\al-1}{2}$, then the local well-posedness statement in Theorem~\ref{Th-loc-1} fails for every $s\in\R$.
See Remark \ref{RMK:cr} for further discussion.

\subsubsection{Negative regularity case.}
When $s < 0$, the relation~\eqref{est-i1} is no longer valid.  
Instead, depending on the minimum frequency, one may employ the following inequality:
\begin{equation}\label{est-i2}
\frac{\langle\xi_1\rangle^{-s}\langle\xi_2\rangle^{-s}\langle\xi_3\rangle^{-s}}
{\langle\xi_4\rangle^{-s}} 
\lesssim 
\langle\xi_1\rangle^{-\tfrac{3}{2}s}\langle\xi_3\rangle^{-\tfrac{3}{2}s},
\end{equation}
which, again, removes the $\xi_4$ dependence, thereby reducing the estimate to a bilinear form.  
Proceeding in the same way as for the positive regularity case yields an analogous estimate, but with the loss of an extra factor $\tfrac{s}{2}$ in the regularity exponent.  
This leads to the following result.

\begin{corollary}\label{Th-loc-2} 
Let $1 < \alpha \le 2$ and $-\tfrac{1}{4} < \beta < \tfrac{\alpha - 1}{2}$.  
Then, for any given data $u_0 \in H^s(\R)$, the IVP~\eqref{MMTEquation} is locally well-posed in $H^s (\R)$ for $s < 0$ and
$s \ge \tfrac{4}{3}\beta + \tfrac{2 - \alpha}{6}$.
\end{corollary}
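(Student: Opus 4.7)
The plan is to follow the same Bourgain-space contraction scheme that established Theorem~\ref{Th-loc-1}, modifying only the Sobolev-weight manipulation on the resonance hyperplane. Working in the Bourgain spaces $X^{s,b}$ adapted to the dispersion relation $\omega(\xi)=|\xi|^{\al}$ and using the Duhamel formulation of \eqref{MMTEquation}, the problem reduces, for an appropriate $b>\tfrac12$, to the trilinear estimate
\begin{equation*}
\bigl\|D_x^{\be}\bigl(|D_x^{\be}u|^{2}D_x^{\be}u\bigr)\bigr\|_{X^{s,b-1}}\lesssim \|u\|_{X^{s,b}}^{3}.
\end{equation*}

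After Parseval and duality, this is equivalent to a weighted quadrilinear integral on the resonance hyperplane $\xi_1+\xi_2+\xi_3+\xi_4=0$ carrying the Sobolev/derivative weight
\begin{equation*}
\frac{\jb{\xi_4}^{s}\,|\xi_1\xi_2\xi_3\xi_4|^{\be}}{\jb{\xi_1}^{s}\jb{\xi_2}^{s}\jb{\xi_3}^{s}},
\end{equation*}
together with the usual modulation factors. In the positive-regularity regime, the inequality \eqref{est-i1} removed the $\xi_4$-dependence of the Sobolev ratio. Since \eqref{est-i1} is no longer valid for $s<0$, I would instead apply \eqref{est-i2}, which bounds the above ratio by $\jb{\xi_1}^{-\tfrac{3}{2}s}\jb{\xi_3}^{-\tfrac{3}{2}s}$, again eliminating $\xi_4$ from the Sobolev weights and reducing the estimate to a bilinear integral of the same shape as the one already handled at $s=0$ in Theorem~\ref{Th-loc-1}.

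The only change is that two of the derivative factors are upgraded from $|\xi_j|^{\be}$ to $|\xi_j|^{\be-\tfrac{3}{2}s}$ for $j=1,3$, so the total derivative order on the quadrilinear form becomes $4\be-3s=4\bigl(\be-\tfrac{3}{4}s\bigr)$. Rerunning the dispersive/resonance case analysis of Theorem~\ref{Th-loc-1} at target regularity $0$ with $\be$ replaced by the effective parameter $\be-\tfrac{3}{4}s$ yields the threshold
\begin{equation*}
0\ge 2\bigl(\be-\tfrac{3}{4}s\bigr)+\tfrac{2-\al}{4},
\end{equation*}
which rearranges exactly to $s\ge \tfrac{4}{3}\be+\tfrac{2-\al}{6}$. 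A standard contraction mapping argument in $X^{s,b}$ then closes the fixed point.

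The main technical obstacle I expect is verifying that \eqref{est-i2} can be invoked uniformly across every frequency configuration arising from the dyadic localisation: one must check that the distinguished role of $\xi_1$ and $\xi_3$ in \eqref{est-i2} is compatible with the various high-high and high-low subregions in the proof of Theorem~\ref{Th-loc-1}, and that the modulation gains that drove the bilinear estimate remain available under the new, asymmetric Sobolev-weight distribution. A secondary point is to confirm that the thresholds match at $s=0^{-}$ with those of Theorem~\ref{Th-loc-1} so that the two statements combine into a coherent well-posedness picture.
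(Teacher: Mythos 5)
Your proposal is essentially the paper's own argument: apply \eqref{est-i2} (in the configuration $|\xi_4|\ll|\xi_2|\lesssim|\xi_1|\sim|\xi_3|$, with the full cancellation of \eqref{mult-1} in the remaining configurations) to remove the $\langle\xi_4\rangle$ weight, reduce to the bilinear estimate of Proposition~\ref{bilinear}, and read off the condition $\tfrac32 s\ge 2\beta+\tfrac{2-\alpha}{4}$, i.e.\ $s\ge\tfrac43\beta+\tfrac{2-\alpha}{6}$. One bookkeeping caution: the paper feeds $\langle\xi_1\rangle^{-\frac32 s}$ and $\langle\xi_3\rangle^{-\frac32 s}$ into Proposition~\ref{bilinear} as Sobolev weights, i.e.\ it applies the bilinear estimate with the \emph{same} $\beta$ and regularity index $\tfrac32 s$ in place of $s$; your alternative of redistributing the extra $-3s$ derivatives equally over all four factors as an ``effective parameter'' $\beta-\tfrac34 s$ at regularity $0$ is not a pointwise majorization in the high--low regimes (e.g.\ $|\xi_1|\gg 1\gtrsim|\xi_2|$), although it happens to produce the same threshold, so the shifted-regularity formulation should be used in the actual proof.
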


The bound~\eqref{est-i2} is rather crude and yields a non-optimal regularity threshold.  
To improve this, we refine the analysis by distinguishing between two frequency configurations:
\begin{itemize}
\item[(i)] If $|\xi_4| \gtrsim |\xi_j|$ for some $j \in \{1,2,3\}$, we can again reduce to the bilinear setting as in~\eqref{est-i1}.
\item[(ii)] If $|\xi_4| \ll |\xi_j|$ for all $j=1,2,3$, such a reduction is no longer possible; we therefore estimate the full trilinear form directly in Section \ref{sec-4}.
\end{itemize}
The latter case yields an improved regularity threshold for a wide range of $\beta < 0$, leading to the following theorem (but with a more restrictive range of $\al$).

\begin{theorem}\label{Th-loc-3} 
Let $1 < \alpha \le 2$ and $-\tfrac{1}{4} < \beta < \tfrac{\alpha - 1}{2}$.  
Then, for any given data $u_0 \in H^s(\R)$, the IVP~\eqref{MMTEquation} is locally well-posed whenever
$0 > s > \beta + \tfrac{5}{8} - \tfrac{\alpha}{2}$ and $
s \ge 2\beta + \tfrac{2 - \alpha}{4}$.
\end{theorem}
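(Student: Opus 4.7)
The plan is to run a contraction mapping argument for the Duhamel formulation of \eqref{MMTEquation} in a Bourgain space $X^{s,b}$ adapted to the dispersion $\omega(\xi) = |\xi|^{\alpha}$, with $b$ slightly larger than $\tfrac12$. Once the standard linear and transfer-of-regularity estimates are in place, everything reduces to the trilinear Bourgain bound
\[
\Big\|D_x^{\beta}\bigl(D_x^{\beta}u_1\,\overline{D_x^{\beta}u_2}\,D_x^{\beta}u_3\bigr)\Big\|_{X^{s,b-1}} \les \prod_{j=1}^3 \|u_j\|_{X^{s,b}},
\]
which, by duality and Plancherel, becomes an estimate on a weighted four-linear spacetime convolution of nonnegative Fourier multipliers on the hyperplane $\xi_1-\xi_2+\xi_3-\xi_4 = 0$, with modulation weights $\jb{L_j}$ governed by $L_j = \tau_j + (-1)^{j+1}|\xi_j|^{\alpha}$.

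Following the splitting indicated before the statement, I divide into two frequency regimes. In regime (i), where $|\xi_4|\gtrsim |\xi_{j^\star}|$ for some $j^\star\in\{1,2,3\}$, one has $\jb{\xi_4}^s\le \jb{\xi_{j^\star}}^s$ for $s<0$, so the output weight is absorbed into an input weight and the four-linear form collapses to a bilinear estimate of exactly the shape handled in the proof of Theorem~\ref{Th-loc-1}; this reproduces the constraint $s\ge 2\beta+\tfrac{2-\alpha}{4}$. The substantive new work is in regime (ii), where $|\xi_4|$ is much smaller than each $|\xi_j|$; momentum conservation then forces the two largest among $|\xi_1|,|\xi_2|,|\xi_3|$ to share a common dyadic size $N\gg|\xi_4|$, so the interaction is of ``low output versus three high inputs'' type.

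For regime (ii), I carry out a Tao-type multilinear multiplier analysis: perform dyadic decompositions of each frequency and each modulation, and bound the localised four-linear form via Cauchy--Schwarz together with the standard dispersive $L^2$-convolution estimate on the characteristic curves $\tau = \pm|\xi|^{\alpha}$. The resonance identity
\[
L_1-L_2+L_3-L_4 = -\bigl(|\xi_1|^{\alpha}-|\xi_2|^{\alpha}+|\xi_3|^{\alpha}-|\xi_4|^{\alpha}\bigr),
\]
together with the convexity $\alpha>1$, produces a pointwise lower bound on $\max_j \jb{L_j}$ of order (a fraction of) $N^{\alpha}$, which supplies the key smoothing factor. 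This gain is then balanced against the losses $|\xi_4|^{\beta}\jb{\xi_4}^s$ at the output and $|\xi_1\xi_2\xi_3|^{\beta}\jb{\xi_j}^{-s}\sim N^{3\beta-3s}$ at the inputs; summing the dyadic blocks and optimising the exponent split produces the endpoint condition $s>\beta+\tfrac58-\tfrac\alpha2$.

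The main obstacle is regime (ii): when $s<0$, the output weight $\jb{\xi_4}^s$ becomes singular as $|\xi_4|\to 0$, and for $\beta<0$ the derivative factor $|\xi_4|^{\beta}$ compounds this singularity rather than taming it. Closing the estimate requires extracting every available bit of smoothing from the modulation lower bound (this is precisely where $\alpha>1$ is essential) and controlling the small $\xi_4$-region by the measure of the admissible set it traces out on the hyperplane. The specific constant $\tfrac58$ in the endpoint reflects the optimal dyadic tradeoff between the $N^{\alpha}$ resonance gain, the combined weight loss, and the output frequency localisation, and any cruder splitting yields the weaker threshold of Corollary~\ref{Th-loc-2}.
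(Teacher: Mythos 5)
Your overall architecture matches the paper's: contraction in $X^{s,b}$, reduction via duality to a four-linear multiplier bound, and the split into regime (i) (where $\langle\xi_4\rangle^s$ is absorbed and one falls back on the bilinear estimate behind Theorem~\ref{Th-loc-1}) and regime (ii) ($|\xi_4|\ll|\xi_1|,|\xi_2|,|\xi_3|$, attacked by a direct dyadic multilinear analysis). However, your treatment of regime (ii) has a genuine gap in the two places where the real work lies.

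First, the resonance lower bound you invoke is not correct in general. On $\xi_1+\xi_2+\xi_3+\xi_4=0$ with $|\xi_4|\ll|\xi_j|$, Lemma~\ref{Reso-Lemma} gives $|\widetilde h|\gtrsim|\xi_1+\xi_2|\,|\xi_2+\xi_3|\,N^{\alpha-2}\sim N^{\alpha-1}N_{\mathrm{med}}$, where $N_{\mathrm{med}}$ is the third largest input frequency and may be as small as $1$; the bound $\widetilde H\gtrsim N^{\alpha}$ holds only in the special configuration $N_1\sim N_3\sim N_{\max}$. Moreover, the resonance need not transfer to a large modulation at all: the case $\widetilde H\ll L_{\max}\sim L_{\mathrm{mean}}$ must be treated separately. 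This configuration dependence is precisely what drives the case analysis in Lemma~\ref{claim1} and forces two different counting estimates, \eqref{N1N3max} versus \eqref{Count3-1}.

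Second, ``Cauchy--Schwarz together with the standard dispersive $L^2$-convolution estimate'' yields only the crude bound $\|Y\|\lesssim(L_{\min}L_{\mathrm{med}}N_{\min}N_{\mathrm{med}})^{1/2}$ (the paper's \eqref{BaseEst}), which is essentially the input behind Corollary~\ref{Th-loc-2} and does not reach $s>\beta+\tfrac58-\tfrac\alpha2$. The new ingredient is Proposition~\ref{TrilinearCountingEst}: after box-localising each $\xi_j$ to an interval of length $\ll N_{\mathrm{med}}$ (via Tao's Corollary 3.13 --- this localisation is what permits discarding the $L_{\max}$ constraint), one bounds the measure of the level sets $\{h_1(\xi_i)+h_2(\xi_j)+h_3(\xi-\xi_i-\xi_j)=\tau+O(L_{\mathrm{mean}})\}$ by lower-bounding the partial derivatives of the resonance function in two separate variables, and then interpolates with the base estimate. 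The resulting factors $N_{\min}^{1/4}N^{3/4-\alpha/2}$ and $N_{\min}^{1/6}N_{\mathrm{med}}^{1/6}N^{2/3-\alpha/2}$ are where the constant $\tfrac58$ actually originates; no rebalancing of the standard bilinear tools produces it. Finally, you need a mechanism to desingularise the weights $|\xi_j|^{\beta}$ for $\beta<0$ inside the $[4;\R\times\R]$ norm --- the paper's Lemma~\ref{SmoothingEst}, the four-linear analogue of Tao's Corollary 8.2 --- which is exactly where the restriction $\beta>-\tfrac14$ enters the trilinear part of the argument; your proposal gestures at the singularity but supplies no such mechanism.
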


Theorem \ref{Th-loc-3} improves Corollary \ref{Th-loc-2} for most, but not all, ranges of $\alpha$ and $\beta$. 
When $\beta + \tfrac{5}{8} - \tfrac{\alpha}{2} < 2\beta + \tfrac{2 - \alpha}{4}$, the trilinear refinement further improves Theorem~\ref{Th-loc-1} to negative regularities.  
Collecting the results of Theorem~\ref{Th-loc-1}, Corollary~\ref{Th-loc-2} and Theorem~\ref{Th-loc-3}, we obtain the complete picture of local well-posedness established in this work.

\begin{remark}[Summary of local well-posedness thresholds]\label{RMK:LWP}
\rm 
Let $1 < \alpha \le 2$, $-\tfrac{1}{4} < \beta < \tfrac{\alpha - 1}{2}$, and $\delta > 0$ be arbitrary.  
Define
\begin{equation}\label{s-beta-line}
s_{\beta,\alpha} := 
\begin{cases}
\frac{4}{3}\beta + \frac{2 - \alpha}{6}, & \text{for } \beta \in \big(-\tfrac{1}{4},\, \tfrac{7}{8} - \alpha\big], \\[6pt]
\beta + \frac{5}{8} - \frac{\alpha}{2} + \delta, & \text{for } \beta \in \big(\tfrac{7}{8} - \alpha,\, \tfrac{1}{8} - \tfrac{\alpha}{4}\big], \\[6pt]
2\beta + \frac{2 - \alpha}{4}, & \text{for } \beta \in \big(\tfrac{1}{8} - \tfrac{\alpha}{4},\, \tfrac{\alpha - 1}{2}\big).
\end{cases}
\end{equation}
Then, for any initial data $u_0 \in H^s(\R)$, the IVP~\eqref{MMTEquation} is locally well-posed whenever $s \ge s_{\beta,\alpha}$.  
This piecewise function $s_{\beta,\alpha}$ thus represents the best available regularity threshold obtained from the combination of Theorem~\ref{Th-loc-1}, Corollary~\ref{Th-loc-2} and Theorem~\ref{Th-loc-3}.

The different cases and regions of regularity where the  well-posedness result holds are shown in Figure \ref{fig-1} below.\\

\begin{figure}[ht]

\centering
\begin{subfigure}[b]{0.5\textwidth} 
\resizebox{\textwidth}{!}{
\begin{tikzpicture}
  \draw[<->] (-4.2,0) -- (3.5,0) node[right] {$\beta$};
  \draw[<->] (0,-3.0) -- (0,3.5) node[above] {$s$};
  \draw[magenta] (-3.0, -2.5)  -- (2,3)  node[right] {\small{$s =2\beta+\frac{2-\alpha}{4}$}}; 
    \draw[black][dashed] (-3.0,-3) -- (-3.0, 3) node[left]{$\beta = -\frac14$};
    \draw[black][dashed] (2,-3)node[right]{$\beta = \frac{\alpha-1}{2}$} -- (2, 3) ;
\pgfplothandlerpolarcomb
\pgfplotstreamstart
\pgfplotstreampoint{\pgfpoint{-1cm}{0cm}}
  \pgfpathcircle{\pgfpoint{-3cm}{0cm}} {1pt}
  \pgfpathcircle{\pgfpoint{2cm}{0cm}} {1pt}
  \pgfusepath{fill}
\pgfplotstreamend
\pgfusepath{stroke}

\end{tikzpicture}}
\subcaption{\label{fig-a}$\frac32 \leq \alpha \leq 2$ }
\end{subfigure}
~
\hspace{-1cm}
\begin{subfigure}[b]{0.55\textwidth}
\resizebox{\textwidth}{!}{
\begin{tikzpicture}

  \draw[<->] (-4.2,0) -- (3.5,0) node[right] {$\beta$};
  \draw[<->] (0,-3.0) -- (0,3.5) node[above] {$s$};

  \draw[black][dotted] (-1.5,-0.85) -- (-1.5,0) node[above]{$\frac18 - \frac{\alpha}{4}$};
  \draw[magenta] (-3.0, -2.5)  -- (2,3)  node[right] {\small{$s =2\beta+\frac{2-\alpha}{4}$}}; 

  \draw[blue][dashed] (-3, -1.5) node[left]{\small{$s=\beta+\frac58-\frac{\alpha}{2}$}} -- (-1.5,-0.85) ;
 \draw[blue] (-3.8, -0.8) ;

    \draw[black][dashed] (-3.0,-3) -- (-3.0, 3) node[left]{$\beta = -\frac14$};
    \draw[black][dashed] (2,-3)node[right]{$\beta = \frac{\alpha-1}{2}$} -- (2, 3) ;
  
\pgfplothandlerpolarcomb
\pgfplotstreamstart
\pgfplotstreampoint{\pgfpoint{-1cm}{0cm}}
  \pgfpathcircle{\pgfpoint{-1.5cm}{-0.85cm}} {1pt}
  \pgfpathcircle{\pgfpoint{-3cm}{0cm}} {1pt}
  \pgfpathcircle{\pgfpoint{2cm}{0cm}} {1pt}
   \pgfpathcircle{\pgfpoint{-1.5cm}{0cm}} {1pt}
  \pgfusepath{fill}
\pgfplotstreamend
\pgfusepath{stroke}

\end{tikzpicture}}
\subcaption{\label{fig-b}$\frac{9}{8} < \alpha < \frac{3}{2}$}

\end{subfigure}

\begin{subfigure}[b]{0.55\textwidth}
\resizebox{\textwidth}{!}{
\begin{tikzpicture}

  \draw[<->] (-4.2,0) -- (3.5,0) node[right] {$\beta$};
  \draw[<->] (0,-3.0) -- (0,3.5) node[above] {$s$};

  \draw[black][dotted] (-1.5,-0.85) -- (-1.5,0) node[above]{\footnotesize{\hspace{0.15cm}$\frac18 - \frac{\alpha}{4}$}};
  \draw[magenta] (-3.0, -2.5)  -- (2,3)  node[right] {$s =2\beta+\frac{2-\alpha}{4}$}; 

  \draw[blue][dashed] (-2.25, -1.17) node[left]{\hspace{-3.9cm}\small{$s=\beta+\frac58-\frac{\alpha}{2}$}} -- (-1.5,-0.85) ;
 \draw[blue] (-3.8, -0.8) ;

 \draw[orange] (-3, -1.75) node[left]{\small{$s=\frac43\beta+\frac{2-\alpha}{6}$}} -- (-2.25,-1.17) ;
 \draw[blue] (-3.8, -0.8) ;

    \draw[black][dashed] (-3.0,-3) -- (-3.0, 3) node[left]{$\beta = -\frac14$};
    \draw[black][dashed] (2,-3)node[right]{$\beta = \frac{\alpha-1}{2}$} -- (2, 3) ;
  \draw[black][dotted] (-2.25,-1.17) -- (-2.25, 0) node[above]{\hspace{-0.15cm}\footnotesize{$\frac78 - \alpha$}};
    \draw[black][dashed] (2,-3)node[right]{$\beta = \frac{\alpha-1}{2}$} -- (2, 3);
\pgfplothandlerpolarcomb
\pgfplotstreamstart
\pgfplotstreampoint{\pgfpoint{-1cm}{0cm}}
  \pgfpathcircle{\pgfpoint{-1.5cm}{-0.85cm}} {1pt}
  \pgfpathcircle{\pgfpoint{-2.25cm}{-1.17cm}} {1pt}
  \pgfpathcircle{\pgfpoint{-2.25cm}{0cm}} {1pt}
  \pgfpathcircle{\pgfpoint{-3cm}{0cm}} {1pt}
  \pgfpathcircle{\pgfpoint{2cm}{0cm}} {1pt}
   \pgfpathcircle{\pgfpoint{-1.5cm}{0cm}} {1pt}
  \pgfusepath{fill}
\pgfplotstreamend
\pgfusepath{stroke}

\end{tikzpicture}}
\subcaption{\label{fig-c}$1< \alpha \leq \frac{9}{8}$}
\end{subfigure}
\caption{\label{fig-1} Regions for local well-posedness}
\end{figure}
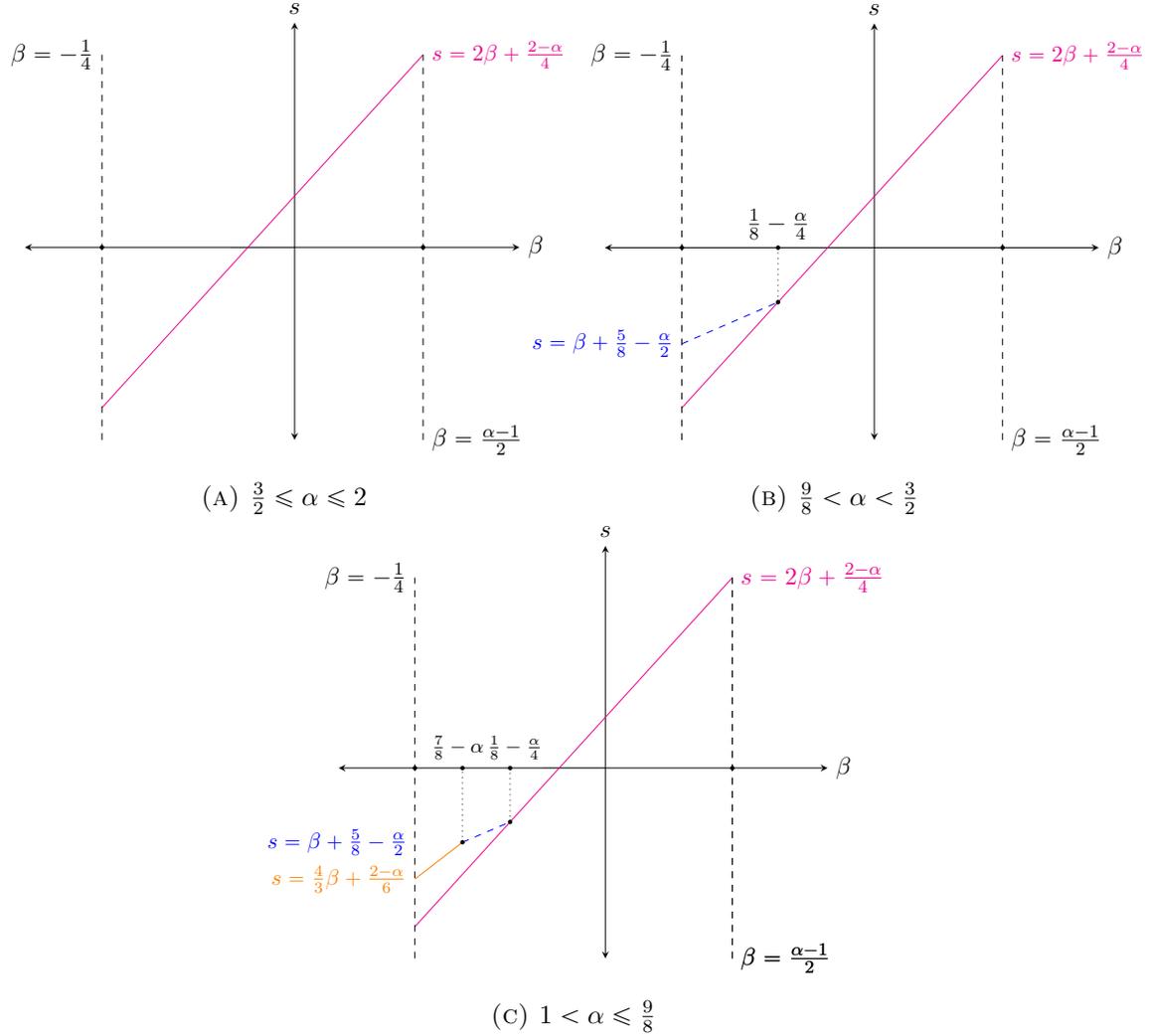

From the figures, we observe that the regions of well-posedness vary with the dispersion parameter~$\alpha$.  
For larger dispersion, namely $\tfrac{3}{2} \le \alpha \le 2$ (Figure~(A)), the local well-posedness holds for all points lying on and above the magenta line 
\[
s = 2\beta + \tfrac{2-\alpha}{4}.
\]
When the dispersion weakens to $\tfrac{9}{8} < \alpha < \tfrac{3}{2}$ (Figure~(B)), 
an additional restriction appears: the well-posedness region lies above both the magenta line 
$s = 2\beta + \tfrac{2-\alpha}{4}$ 
and the dashed blue line 
$s = \beta + \tfrac{5}{8} - \tfrac{\alpha}{2}$, 
with the transition point occurring at $\beta = \tfrac{1}{8} - \tfrac{\alpha}{4}$.
Finally, for the weakest dispersion range $1 < \alpha \le \tfrac{9}{8}$ (Figure~(C)), 
the admissible regularity region is further expanded by the orange curve 
\[
s = \tfrac{4}{3}\beta + \tfrac{2-\alpha}{6},
\]
which dominates for $\beta < \tfrac{7}{8} - \alpha$.
Hence, as $\alpha$ decreases, the boundary of the well-posedness region descends and the dependence on~$\beta$ becomes increasingly nonlinear, 
reflecting the greater difficulty of establishing well-posedness in the weakly dispersive regime.
\end{remark}

\begin{remark} \rm \label{RMK:high}
It is worth emphasising that the well-posedness results established in this work continue to hold even when the dispersion is of higher order. In particular, the analysis extends without essential modification to the regime $\alpha>2$, where the stronger dispersive effect further improves the smoothing and decay properties of the linear flow.
\end{remark}

\begin{remark}\rm 
\label{RMK:cr}
We remark that the restriction on $\beta$ is sharp in view of Theorem \ref{Th-ill-Line} below. 
Specifically, if $\beta < -\frac{1}{4}$ or $\beta > \frac{\alpha-1}{2}$, the local well-posedness statements in Theorem \ref{Th-loc-1}, Corollary \ref{Th-loc-2}, and Theorem \ref{Th-loc-3} fail for every $s \in \mathbb{R}$. 
This demonstrates that for a given dispersion of order $\alpha$, 
there is a fundamental limit on the strength of the nonlinear derivative for which well-posedness can be established.
The primary challenge in solving derivative nonlinear dispersive PDEs is managing the `derivative loss.' 
To address this, the central strategy is to exploit the `smoothing effect' inherent in the dispersion to compensate for the derivatives in the nonlinearity. 
The condition $\beta < \frac{\alpha-1}{2}$ appears to be the sharp criterion for the maximum derivative loss that a dispersive operator of order $\alpha$ can accommodate. 
We expect this to be a general principle for derivative nonlinearities: the smoothing gain from dispersion can only counteract a specific amount of derivative loss in the nonlinearity.
Further discussion and additional examples are provided in Subsection \ref{SUB:dNLS}.
\end{remark}


Having established local well-posedness, a natural question arises: can the local solution be extended globally in time? Using energy and mass conservation laws \eqref{energy} and \eqref{mass} one can obtain an {\em a priori} bound on the solution for data in $H^{\frac{\alpha}{2}}(\R)$ there by obtaining the following global well-posedness result.

\begin{theorem}\label{Th-gwp-Line}
The local solution to the IVP \eqref{MMTEquation} when $\kappa = -1$ given by Theorem \ref{Th-loc-1}, Corollary  \ref{Th-loc-2}, and and Theorem \ref{Th-loc-3} can be extended globally in time for any given data $u_0\in H^{s}(\R)$, $s\geq \frac{\alpha}{2}$.
\end{theorem}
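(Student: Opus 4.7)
The argument is the standard local-to-global iteration based on the Hamiltonian conservation laws \eqref{energy}--\eqref{mass}. The first step is to verify that $s=\alpha/2$ falls within the local well-posedness range described by Remark \ref{RMK:LWP}. A direct inspection of \eqref{s-beta-line} shows that the rightmost piece $s_{\beta,\alpha}=2\beta+(2-\alpha)/4$ attains its supremum $(3\alpha-2)/4$ as $\beta\uparrow(\alpha-1)/2$, and $(3\alpha-2)/4\leq\alpha/2$ is equivalent to $\alpha\leq 2$; the other two pieces produce strictly smaller thresholds, so $\alpha/2\geq s_{\beta,\alpha}$ throughout the admissible region $\beta\in(-\tfrac14,\tfrac{\alpha-1}{2})$. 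Hence, Theorem \ref{Th-loc-1}, Corollary \ref{Th-loc-2} or Theorem \ref{Th-loc-3} furnishes a local $H^{\alpha/2}$ solution.

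To extract a uniform a priori bound, observe that the 1D Sobolev embedding $H^{1/4}(\R)\hookrightarrow L^4(\R)$ together with $\beta+\tfrac14<\alpha/2$ (an immediate consequence of $\beta<(\alpha-1)/2$) gives $\|D_x^{\beta}u_0\|_{L^4}\lesssim \|u_0\|_{H^{\alpha/2}}$, so the energy $E(u_0)$ is finite for $u_0\in H^{\alpha/2}$. A density-and-continuous-dependence argument (approximate $u_0$ by $H^\infty$ data, apply the conservation identities to the resulting smooth solutions, and pass to the limit) transfers \eqref{energy}--\eqref{mass} to $H^{\alpha/2}$ solutions. Discarding the nonnegative quartic term in $E$ then yields
\[
\|u(t)\|_{H^{\alpha/2}}^2 \leq M(u_0)+E(u_0)
\]
throughout the lifespan. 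Since $s=\alpha/2$ is subcritical, the local existence time $\tau$ delivered by Theorem \ref{Th-loc-1} depends only on $\|u_0\|_{H^{\alpha/2}}$; iterating on successive intervals of uniform length $\tau=\tau(M(u_0)+E(u_0))$ extends the solution to all of $\R$.

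For data $u_0\in H^s$ with $s>\alpha/2$, one needs persistence of regularity: the same positive-regularity trilinear estimate underlying Theorem \ref{Th-loc-1}, run at the $H^s$ level against an $X^{\alpha/2,b}$ baseline, produces a Gronwall-type iteration of the form $\|u\|_{X^{s,b}(I_j)}\lesssim \|u(t_j)\|_{H^s}$ on successive intervals $I_j$ whose length is controlled solely by $\|u_0\|_{H^{\alpha/2}}$. The global $H^{\alpha/2}$ bound therefore propagates into an $H^s$ bound which may grow in time but remains finite on every compact interval. The main technical difficulty is precisely this persistence step: the Hamiltonian controls only the $H^{\alpha/2}$ norm, so higher regularity must be maintained through the multilinear machinery rather than through any conservation law, and one has to verify that the implicit constant in the Bourgain estimate at level $s$ depends only on the conserved $H^{\alpha/2}$ norm and not on $\|u(t)\|_{H^s}$ itself, so that no finite-time blow-up of the $H^s$ norm can occur.
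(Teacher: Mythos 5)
Your argument is correct and follows exactly the route the paper intends (and only sketches in the sentence preceding the theorem): the conservation laws \eqref{energy}--\eqref{mass} yield an a priori $H^{\alpha/2}$ bound after discarding the nonnegative quartic term, and one iterates the local theory, whose existence time in the subcritical regime depends only on the $H^{\alpha/2}$ norm. Your additional checks — that $\alpha/2\ge s_{\beta,\alpha}$ for all admissible $\beta$ precisely because $\alpha\le 2$, and that persistence of regularity for $s>\alpha/2$ requires a tame version of the trilinear estimate — are correct and in fact more detailed than anything the paper provides.
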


\subsection{Ill-posedness}
The regularity required on the initial data to obtain the local well-posedness results in Theorem \ref{Th-loc-1}, Corollary \ref{Th-loc-2}, and Theorem \ref{Th-loc-3} is considerably higher than that suggested by the scaling argument. A natural question, is whether these results are sharp. To address this question, at least partially, we establish the following ill-posedness result.

 \begin{theorem}\label{Th-ill-Line}
Let $1< \alpha\leq 2$, then for any given data $u_0\in H^s(\R)$, there exists no time $T=T(\|u_0\|_{H^s(\R)})$ such that the application that takes the initial data $u_0$ to the solution $u\in C([0, T]: H^s(\R))$ of the IVP \eqref{MMTEquation} is $C^3$ at the origin if $s< 2\beta + \frac{2-\alpha}{4}$ or $\beta<-\frac14$ or $\beta>\frac{\alpha-1}2$.
\end{theorem}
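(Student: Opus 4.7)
\emph{Proof plan.} The strategy is the classical Bourgain--Tzvetkov argument for failure of $C^3$-regularity of the solution map at the origin. Setting $S(t):=e^{-it(-\partial_x^2)^{\alpha/2}}$, the third Picard iterate of the Duhamel formula is the trilinear expression
\[
u^{(3)}(t) := -i\int_0^t S(t-t')D_x^\beta\bigl(|D_x^\beta S(t')u_0|^2 D_x^\beta S(t')u_0\bigr)dt'.
\]
If the data-to-solution map were $C^3$ at $0$ from $H^s(\R)$ to $C([0,T];H^s(\R))$, then one would necessarily have the trilinear bound $\sup_{t\in[0,T]}\|u^{(3)}(t)\|_{H^s}\lesssim\|u_0\|_{H^s}^3$. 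The plan is to falsify this bound in each of the three claimed regimes by exhibiting normalised initial data $u_{0,n}$ with $\|u_{0,n}\|_{H^s}\sim 1$ along which $\|u^{(3)}(t_n)\|_{H^s}\to\infty$ for some $t_n\in(0,T]$. Taking Fourier transforms,
\[
\widehat{u^{(3)}}(t,\xi) = -i|\xi|^\beta e^{-it|\xi|^\alpha}\int_0^t\!\int_{\xi_1-\xi_2+\xi_3=\xi}\! e^{it'\Phi}|\xi_1\xi_2\xi_3|^\beta\widehat{u_0}(\xi_1)\overline{\widehat{u_0}(\xi_2)}\widehat{u_0}(\xi_3)\,d\sigma\,dt',
\]
with modulation $\Phi:=|\xi|^\alpha-|\xi_1|^\alpha+|\xi_2|^\alpha-|\xi_3|^\alpha$; the inner time integral is of size $\min(t,1/|\Phi|)$, so the approach in each case is to concentrate $\widehat{u_0}$ on carefully chosen intervals that place $\Phi$ in a near-resonant regime, and then to compute a lower bound on $\|u^{(3)}\|_{H^s}$.

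For the scaling regime $s<2\beta+\tfrac{2-\alpha}{4}$, I would take $\widehat{u_0}=A\chi_{[N,N+\delta]}$ with $\delta\sim N^{(2-\alpha)/2}$ and amplitude $A\sim\delta^{-1/2}N^{-s}$ so that $\|u_0\|_{H^s}\sim 1$. A second-order Taylor expansion around frequency $N$ gives $|\Phi|\lesssim N^{\alpha-2}\delta^2\sim 1$, so the phase is bounded on the full two-dimensional convolution region of measure $\sim\delta^2$; a direct computation then yields $\|u^{(3)}(T)\|_{H^s}\gtrsim T^{1/2}N^{4\beta-2s+(2-\alpha)/2}$, which diverges as $N\to\infty$ under the hypothesis. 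The lower threshold $\beta<-\tfrac14$ is handled symmetrically at low frequency: take $\widehat{u_0}=\eps^{-1/2}\chi_{[\eps,2\eps]}$ and let $\eps\to 0$, noting that $\jb{\xi}^s\sim 1$ on this support so no $s$-dependence enters the normalisation. Here $|\Phi|\lesssim\eps^\alpha$ is automatically small and one computes $\|u^{(3)}(T)\|_{H^s}\gtrsim T\eps^{4\beta+1}$, which diverges as $\eps\to 0$ precisely when $\beta<-\tfrac14$.

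For the upper threshold $\beta>\tfrac{\alpha-1}{2}$, I would use a genuinely two-scale input $\widehat{u_0}=N^{-s}\chi_{[N,N+1]}+M^{-s}\chi_{[M,M+1]}$ with $N$ fixed and $M\to\infty$. The relevant trilinear interaction $\xi_1,\xi_2\in[N,N+1]$, $\xi_3\in[M,M+1]$ produces output near $\xi\sim M$, and the phase Taylor-expands as $\Phi\approx\alpha(M^{\alpha-1}-N^{\alpha-1})(\xi_1-\xi_2)\sim M^{\alpha-1}(\xi_1-\xi_2)$, so the resonance condition $|\Phi T|\lesssim 1$ confines $\xi_1-\xi_2$ to a band of measure $\sim M^{1-\alpha}$. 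Balancing the resonant and non-resonant contributions, the trilinear integral contributes a factor $\sim M^{1-\alpha}$, leading to $\|u^{(3)}(T)\|_{H^s}\gtrsim M^{2\beta+1-\alpha}\cdot N^{2\beta-2s}$, which diverges as $M\to\infty$ for every fixed $s\in\R$ exactly when $\beta>\tfrac{\alpha-1}{2}$.

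The main obstacle I expect is the Case 2 computation: unlike the other two cases the phase is not trivially small on the relevant resonant set but has a genuine size $M^{\alpha-1}|\xi_1-\xi_2|$, and the correct reduction of the effective resonant measure to $\sim M^{1-\alpha}$ must be tracked carefully. This gain must exactly cancel the $M^s$ weighting of the output $H^s$ norm in order to isolate the $s$-independent threshold $\tfrac{\alpha-1}{2}$; mishandling this balance would either blur the sharpness or destroy the uniformity in $s$. The other two cases follow the classical near-resonant wavepacket template, with the bandwidth choices $\delta\sim N^{(2-\alpha)/2}$ at high frequency and $\delta\sim\eps$ at low frequency being the decisive quantitative inputs.
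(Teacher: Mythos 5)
Your overall strategy coincides with the paper's: Bourgain-type failure of $C^3$ via a lower bound on the third Picard iterate, tested against three frequency configurations (high--high--high for the scaling threshold, low--low--low for $\beta<-\tfrac14$, and a high/low mixed interaction for $\beta>\tfrac{\alpha-1}{2}$), and your predicted divergence rates $N^{4\beta-2s+\frac{2-\alpha}{2}}$, $\eps^{4\beta+1}$ and $M^{2\beta+1-\alpha}$ match the paper's exactly. The low--low--low case is identical to the paper's, and your single-interval wavepacket of width $\delta\sim N^{(2-\alpha)/2}$ for the scaling case is a cosmetic variant of the paper's two nearby intervals of width $N^{(2-\alpha)/2-\varepsilon}$ (you then need to fix $t$ small but absolute so that $\mathrm{Re}\,\frac{e^{it\Phi}-1}{i\Phi}\gtrsim t$ on the whole region where $|\Phi|\lesssim 1$; the paper's extra $\varepsilon$ makes $\Phi\to0$ and avoids even this).

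The one place where your route genuinely diverges from the paper's, and where the gap you flag is real, is the regime $\beta>\tfrac{\alpha-1}{2}$. You take unit-width blocks at frequencies $N$ (fixed) and $M\to\infty$ and propose to extract the effective resonant measure $M^{1-\alpha}$ from the oscillatory factor $\frac{e^{iT\Phi}-1}{i\Phi}$ with $\Phi\sim M^{\alpha-1}(\xi_1-\xi_2)$. This can be made to work (the real part $\frac{\sin(T\Phi)}{\Phi}$ is nonnegative on the relevant range and its integral in $\xi_1-\xi_2$ is $\gtrsim T\cdot M^{1-\alpha}$ after rescaling), but you have not addressed a second problem your construction creates: with data $N^{-s}\chi_{[N,N+1]}+M^{-s}\chi_{[M,M+1]}$, the output Fourier support of the desired interaction ($\xi_1,\xi_2$ low, $\xi_3$ high) near $\xi\sim M$ \emph{overlaps} with the outputs of the parasitic interactions in which two or three of the inputs lie in $[M,M+1]$. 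For very negative $s$ the all-high contribution $\sim TM^{4\beta-3s}$ dominates your target term $\sim TM^{2\beta-s+1-\alpha}N^{2\beta-2s}$, so a lower bound on the sum requires a sign/phase argument (all contributions do have positive real part, but this must be said). The paper sidesteps both issues at once by shrinking the two low blocks to disjoint intervals of width $\lambda=N^{1-\alpha-\varepsilon}$ near frequency $1$ and the high block to width $\lambda$ near $N$: then $|\Phi|\sim N^{-\varepsilon}$ uniformly (no oscillation) and the output interval $[N+2\lambda,N+5\lambda]$ of the targeted interaction is disjoint from the supports of all other cubic interactions, so the targeted term can be read off in isolation. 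You should either adopt that separation or supply the constructive-interference argument; as written, Case 2 is incomplete.
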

To prove this result we constructed appropriate counter-examples based on the frequency interactions suggested in the proof of the reduced bilinear estimates. By considering high-high-high to high interaction, we prove that the regularity requirement is sharp. Analogously by considering high-low-low to high interaction and low-low-low to low we obtain the upper and lower limit respectively for the sharp range of the parameter $\beta$.\\

As seen in Figure \ref{fig-1}, the result of Theorem \ref{Th-ill-Line} shows that the local well-posedness established in Section \ref{SUB:well} is sharp for $\alpha \geq \frac32$ or $\beta > \frac{1}{8} - \frac{\alpha}{4}$, as the contraction mapping argument employed in its proof yields a smooth data-to-solution map. However, for other ranges of $\beta$, it remains an open question whether the result obtained in this work is sharp. This sort of ill-posedness result was first introduced by Bourgain in \cite{B-97} in the KdV context and later used by several authors for other models; see for example \cite{LPS-18, MST-02, RV-17, SS-23} and references therein.
 


\subsection{Derivative nonlinear Schr\"odinger equations}
\label{SUB:dNLS}
 
Considering $v=D_x^\beta u$, we see that the IVP \eqref{MMTEquation} transforms to
\begin{equation} \label{MMT-v}
\begin{cases}
i\del_t v + (-\partial_x^2)^{\frac{\alpha}{2}}v = D_x^{2\beta} (|v|^2 v),\\
v(x, 0) = v_0(x),
\end{cases}\qquad x, t\in\R,
\end{equation}

\noi 
In particular, for $\alpha = 2$ and $\beta < \tfrac12$, the equation \eqref{MMT-v} was studied in \cite{WE-22}, where the authors proved that \eqref{MMT-v} is well-posed in $H^s(\R)$ for $s > \beta$, and ill-posed for $s < \beta$ in the sense of failure of uniform continuity of the solution map. However, the critical regularity $s = \beta$ was left unresolved.
Theorem~\ref{Th-loc-1} addresses this critical case and establishes well-posedness in $H^\beta$ (after the transformation $v = D_x^\beta u$). In particular, the local well-posedness result for the IVP \eqref{MMTEquation} in $H^s$, $s \ge 2\beta$, directly implies the local well-posedness of \eqref{MMT-v} in $H^s$, $s \ge \beta$.
In particular, we have the following consequence of Theorem \ref{Th-loc-1} and Remark \ref{RMK:high},
\begin{corollary}
\label{COR:dNLS}
Let $\al > 1$ and $0 \leq \beta < \frac{\al-1}2$.
Then, for initial data $v_0 \in H^s (\R)$, the IVP~\eqref{MMT-v} is locally well-posed in $H^s (\R)  $ for $s \ge \beta + \frac{2-\al}4$.
\end{corollary}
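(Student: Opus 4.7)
The strategy, indicated in the paragraph preceding the statement, is to reduce \eqref{MMT-v} to \eqref{MMTEquation} via the Fourier-multiplier conjugation $v = D_x^\beta u$. Since $D_x^\beta$ commutes with both $i\partial_t$ and $(-\partial_x^2)^{\alpha/2}$, applying it to \eqref{MMTEquation} yields
\begin{equation*}
i\partial_t v + (-\partial_x^2)^{\alpha/2} v \;=\; D_x^\beta \bigl(D_x^\beta(|D_x^\beta u|^2 D_x^\beta u)\bigr) \;=\; D_x^{2\beta}(|v|^2 v),
\end{equation*}
which is exactly \eqref{MMT-v}. At the level of Sobolev spaces the multiplier $|\xi|^\beta$ shifts regularity by $\beta$ on frequencies $|\xi|\gtrsim 1$, so one expects $v \in H^s$ to correspond to $u \in H^{s+\beta}$.

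Under the assumptions $\alpha > 1$ and $0 \leq \beta < \tfrac{\alpha-1}{2}$, the threshold in Theorem~\ref{Th-loc-1} simplifies to $\max\{0, 2\beta + \tfrac{2-\alpha}{4}\} = 2\beta + \tfrac{2-\alpha}{4}$ whenever $\alpha \leq 2$, while the extension described in Remark~\ref{RMK:high} covers the range $\alpha > 2$. The core observation is that invoking Theorem~\ref{Th-loc-1} at regularity $s+\beta$ requires precisely $s+\beta \geq 2\beta + \tfrac{2-\alpha}{4}$, which coincides with the hypothesis $s \geq \beta + \tfrac{2-\alpha}{4}$. Concretely, the plan is: given $v_0 \in H^s(\R)$, perform a low/high frequency split $v_0 = P_{\leq 1} v_0 + P_{>1} v_0$ and set $u_0 := D_x^{-\beta} P_{>1} v_0 + P_{\leq 1} v_0$, which lies in $H^{s+\beta}(\R)$ with $\|u_0\|_{H^{s+\beta}} \lesssim \|v_0\|_{H^s}$; apply Theorem~\ref{Th-loc-1} (together with Remark~\ref{RMK:high} when $\alpha > 2$) to obtain a unique local solution $u \in C([0,T]; H^{s+\beta}(\R))$ of \eqref{MMTEquation}; set $v := D_x^\beta u$ and verify that $v \in C([0,T]; H^s(\R))$ solves \eqref{MMT-v} with the correct initial datum; finally, transfer uniqueness and continuous dependence through the boundedness of $D_x^{\pm \beta}$ on high frequencies.

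The main obstacle is the low-frequency singularity of $D_x^{-\beta}$ when $\beta>0$, which rules out a literal global inversion. The Littlewood--Paley splitting above circumvents this because at low frequencies all Sobolev norms are equivalent and $D_x^{2\beta}$ is bounded (in fact smoothing for $\beta\ge 0$), so the low-frequency contribution is harmless while the high-frequency contribution is controlled by exactly the trilinear estimates already established for \eqref{MMTEquation}. A minor subtlety is to verify that the ambiguity in the low-frequency extension of $u_0$ does not affect uniqueness for $v$: this follows because the solution map is defined intrinsically in the $v$-formulation, so different low-frequency extensions for $u_0$ yield the same $v$ after applying $D_x^\beta$. Thus no new harmonic-analytic input beyond Theorem~\ref{Th-loc-1} is required; the corollary is a direct consequence of the multiplier conjugation together with a routine low/high-frequency bookkeeping argument.
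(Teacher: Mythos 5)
Your overall strategy --- conjugating by $D_x^\beta$ and invoking Theorem~\ref{Th-loc-1} (with Remark~\ref{RMK:high} for $\al>2$) at the shifted regularity $s+\beta$ --- is exactly the route the paper takes, and you correctly identify the threshold bookkeeping $s+\beta\ge 2\beta+\frac{2-\al}{4}$. You are also right that the naive inversion $u_0=D_x^{-\beta}v_0$ fails for general $v_0\in H^s$ when $\beta>0$, since $|\xi|^{-\beta}\widehat{v_0}$ need not be square-integrable near $\xi=0$. However, your repair of this obstruction does not work as stated: with $u_0:=D_x^{-\beta}P_{>1}v_0+P_{\le1}v_0$ (where $P_{\le 1},P_{>1}$ denote the low/high frequency projections) one has
\begin{equation*}
D_x^\beta u_0 \;=\; P_{>1}v_0 \;+\; D_x^\beta P_{\le1}v_0 \;\neq\; v_0,
\end{equation*}
so the function $v=D_x^\beta u$ you construct solves \eqref{MMT-v} with the \emph{wrong} initial datum; the discrepancy $(\mathrm{Id}-D_x^{\beta})P_{\le1}v_0$ is generically a nontrivial element of $H^s$. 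The closing appeal to the ``ambiguity of the low-frequency extension'' does not rescue this: $D_x^\beta$ is injective on $L^2$-based spaces, so there is no family of distinct extensions of $u_0$ with the same image under $D_x^\beta$, and in any case none of your candidate data has image equal to $v_0$.

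The gap is not merely cosmetic, because low frequencies are precisely where the $u$- and $v$-formulations are inequivalent. In multiplier terms, passing from the bound on \eqref{tln-e7} at regularity $s+\beta$ to the estimate needed for $D_x^{2\beta}(|v|^2v)$ at regularity $s$ introduces the factors $\big(\langle\xi_i\rangle/|\xi_i|\big)^{\beta}$ on the three input frequencies, which are unbounded as $\xi_i\to0$ and are therefore not dominated by the multiplier the paper actually controls; so the corollary is not a purely formal consequence of Theorem~\ref{Th-loc-1}. The natural repair is to run the contraction directly in the $v$-variable: after the harmless bound $|\xi_4|^{2\beta}\le\langle\xi_4\rangle^{2\beta}$ (valid since $\beta\ge0$), the relevant multiplier involves only Japanese brackets, and one re-runs the reduction to the bilinear estimate of Proposition~\ref{bilinear} (or invokes a smoothing lemma in the spirit of Lemma~\ref{SmoothingEst}) to obtain $\|D_x^{2\beta}(v_1\bar v_2 v_3)\|_{X^{s,b'}}\lesssim\prod_{j}\|v_j\|_{X^{s,b}}$ for $s\ge\beta+\frac{2-\al}{4}$. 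As written, your argument does not establish the corollary for general $v_0\in H^s(\R)$.
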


We remark that Corollary \ref{COR:dNLS} improves the well-posedness result in \cite{WE-22} by covering the endpoint case $s = \be$.

\medskip

When $\al = 2$ and $\beta = \tfrac12$, the equation \eqref{MMT-v} reduces to
\begin{align}\label{dNLS-n}
i\partial_t v - \partial_x^2 v = D_x\big(|v|^2 v\big),
\end{align}
which resembles the derivative Schr\"odinger equation
\begin{align}\label{dNLS}
i\partial_t v - \partial_x^2 v = i\big(|v|^2 v\big)_x.
\end{align}
It is known that \eqref{dNLS} is locally well-posed in $H^s(\R)$ for
$s \ge \tfrac12$, as shown in \cite{Tak99} via a gauge transform, and
ill-posed for $s < \tfrac12$ by \cite{BL01}. In contrast, implementing an
analogous gauge transform for \eqref{dNLS-n} is considerably more delicate,
due to the presence of the nonlocal derivative $D_x$. We plan to address this
problem in forthcoming work.

\medskip

More generally, for the fractional derivative NLS \eqref{MMT-v}, we have the
following consequence of Theorem~\ref{Th-ill-Line}.

\begin{corollary}
\label{COR:ill}
Let $\al > 1$. The initial value problem \eqref{MMT-v} is ill-posed in the
sense that the solution map fails to be $C^3$ if $\be < -\frac14$ or
$\be > \frac{\al-1}{2}$.
\end{corollary}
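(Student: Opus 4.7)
The plan is to reduce Corollary~\ref{COR:ill} to Theorem~\ref{Th-ill-Line} via the substitution $v = D_x^\beta u$ that connects \eqref{MMTEquation} and \eqref{MMT-v}. Since $D_x^\beta$ is a Fourier multiplier of order $\beta$, it acts as a bounded linear isomorphism $H^s(\R)\to H^{s-\beta}(\R)$ on functions whose Fourier supports are bounded away from the origin, which is precisely the regime in which the ill-posedness counterexamples of Theorem~\ref{Th-ill-Line} are constructed.

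I would then proceed by contradiction. Suppose that, for some $\sigma\in\R$, the data-to-solution map $\Phi : v_0 \mapsto v$ for \eqref{MMT-v} were $C^3$ at the origin as a map $H^\sigma(\R) \to C([0,T];H^\sigma(\R))$. Setting $s := \sigma + \beta$, the composition $\Psi := D_x^{-\beta} \circ \Phi \circ D_x^\beta$ is a well-defined map $H^s(\R) \to C([0,T];H^s(\R))$ that sends $u_0$ to the corresponding solution of \eqref{MMTEquation}. Because pre- and post-composition with the bounded linear operators $D_x^{\pm\beta}$ preserves $C^3$-regularity at the origin, $\Psi$ is itself $C^3$ at the origin in $H^s$. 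This contradicts Theorem~\ref{Th-ill-Line}, which asserts the failure of $C^3$-regularity for the MMT solution map at \emph{every} $s\in\R$ whenever $\beta < -\tfrac14$ or $\beta > \tfrac{\alpha-1}{2}$. Since $\sigma\in\R$ was arbitrary, this gives the claimed ill-posedness of \eqref{MMT-v}.

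The only subtle point is that $D_x^\beta$ is singular at frequency zero for $\beta < 0$, so the composition argument above needs justification. This is harmless in practice: the counterexamples underlying Theorem~\ref{Th-ill-Line} (both the high-high-high-to-high and low-low-low-to-low interaction regimes noted after its statement) are built from initial data whose Fourier transforms are supported in dyadic annuli $|\xi|\sim N$ with $N$ bounded away from zero, so $D_x^{\pm\beta}$ acts as a bounded linear map on these localised subspaces with norms of order $N^{\pm\beta}$ that can be absorbed into the scaling of the counterexamples. The main — and only non-routine — step is the bookkeeping that tracks how the $N$-dependent lower bounds for the third variation established in Theorem~\ref{Th-ill-Line} transform under the substitution $v = D_x^\beta u$; once this is checked, Corollary~\ref{COR:ill} follows at once.
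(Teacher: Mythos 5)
Your reduction is sound for the branch $\be>\frac{\al-1}{2}$: the relevant counterexample (the high--low--low to high interaction of Subsection~\ref{subsec:opt-beta-upper}) has all Fourier supports, including that of the output, contained in $\{|\xi|\ge 1\}$, where $|\xi|\sim\langle\xi\rangle$; there $D_x^{\pm\be}$ genuinely act as norm-equivalences between $H^{\sigma}$ and $H^{\sigma\mp\be}$ on the localised data, and the lower bound $|t|N^{1-\al+2\be-\varepsilon}$ transfers verbatim to \eqref{MMT-v}. The gap is in the branch $\be<-\frac14$. The only construction in Theorem~\ref{Th-ill-Line} producing that threshold is the low--low--low interaction of Subsection~\ref{subsec:opt-beta-lower}, whose data and output live at frequencies $|\xi|\sim\lambda=N^{-1}\to 0$, i.e.\ exactly where $D_x^{\be}$ is singular. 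Doing the bookkeeping you defer: with $v_0=D_x^{\be}u_0$ one has $\|v_0\|_{H^{\sigma}}\sim\lambda^{\be}\|u_0\|_{H^{s}}\sim\lambda^{\be}$ (not $\sim 1$), while $d_0^3\Phi^{v}_t(v_0,v_0,v_0)=D_x^{\be}\,d_0^3\Phi_t(u_0,u_0,u_0)$ is supported at $|\xi|\sim\lambda$, so by \eqref{CounterEx4Norm} its $H^{\sigma}$-norm is $\gtrsim |t|\,\lambda^{\be}\cdot\lambda^{4\be+1}$. The normalised ratio is therefore
\[
\frac{\bigl\|d_0^3\Phi^{v}_t(v_0,v_0,v_0)\bigr\|_{H^{\sigma}}}{\|v_0\|_{H^{\sigma}}^{3}}\ \gtrsim\ |t|\,\lambda^{2\be+1},
\]
which diverges only for $\be<-\tfrac12$, not for $\be<-\tfrac14$. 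The loss of the factor $\lambda^{-2\be}$ between the numerator ($\lambda^{\be}$) and the cube of the denominator ($\lambda^{3\be}$) is structural: the nonlinearity of \eqref{MMT-v} carries only $|\xi|^{2\be}$ on the output frequency, whereas that of \eqref{MMTEquation} carries $|\xi|^{\be}\prod_{j}|\xi_j|^{\be}\sim\lambda^{4\be}$ in this regime, so the two equations see genuinely different low-frequency obstructions and the counterexample does not conjugate.

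For the same reason the soft version of your argument (``pre- and post-composition with $D_x^{\pm\be}$ preserves $C^3$'') is unavailable: for $\be<0$ the operator $D_x^{\be}$ is unbounded from $H^{\sigma+\be}$ to $H^{\sigma}$ precisely because of the zero-frequency singularity, and the MMT counterexample for $\be<-\tfrac14$ is not uniformly supported away from the origin. To recover the full range $\be<-\tfrac14$ of Corollary~\ref{COR:ill} one has to build a counterexample directly for \eqref{MMT-v}, exploiting an interaction that sends $O(1)$ input frequencies to output frequency $\xi\to 0$: taking $\widehat{v_0}$ a suitable bump at unit frequencies so that $\widehat{|S(t_1)v_0|^2S(t_1)v_0}(\xi)$ is bounded below near $\xi=0$, the multiplier $|\xi|^{2\be}$ in the first Picard iterate fails to be square-integrable near the origin exactly when $4\be\le -1$, so the analogue of \eqref{C3-d3Phi} leaves every $H^{\sigma}$. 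This mechanism is absent from your transfer argument, so the corollary does not ``follow at once'' from the bookkeeping as claimed; since the paper states the corollary without proof, this is a point you must supply rather than inherit.
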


The endpoint $\be = - \frac14$ has appeared in \cite{MMT-97}.
As in our case, the endpoint $\be = \frac{\al -1}2$ has also been left open in \cite{WE-22} even for $\al = 2$.
It is expected that the ill-posedness range in Corollary~\ref{COR:ill} is
sharp.
More generally,
we consider 

\begin{conjecture}\label{CON:well}
Let $\al > 1$ and $\beta \in \big[-\frac14, \frac{\al-1}{2}\big]$. Then there
exists $s_0 \in \R$ such that \eqref{MMT-v} is well-posed in $H^s$ if and only if
$s \ge s_0$.
\end{conjecture}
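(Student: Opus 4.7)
The conjecture has two faces: existence of a sharp threshold $s_0$, and monotonicity of well-posedness in $s$. I would first establish the monotonicity. Claim: if \eqref{MMT-v} is locally well-posed in $H^{s_1}$, then the same holds in $H^{s_2}$ for every $s_2 > s_1$. This is a persistence-of-regularity statement. Given $v_0 \in H^{s_2} \subset H^{s_1}$, the $X^{s_1,b}$ contraction produces a solution on a time interval $[0, T(\|v_0\|_{H^{s_1}})]$, and a parallel contraction in $X^{s_2,b}$ on the same interval yields an $H^{s_2}$-valued solution; the trilinear bound at level $s_2$ is obtained from the one at level $s_1$ by redistributing derivatives on the Fourier hyperplane, controlled by the already-available $X^{s_1,b}$-norm of two of the three factors. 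Combined with Theorem~\ref{Th-loc-1}, Corollary~\ref{Th-loc-2}, Theorem~\ref{Th-loc-3}, and Theorem~\ref{Th-ill-Line}, this produces a well-defined finite number
\[
s_0(\al,\be) := \inf\bigl\{\, s \in \R : \eqref{MMT-v} \text{ is locally well-posed in } H^s \,\bigr\}.
\]

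Next I would identify $s_0$ with the scaling-critical index $\be + \tfrac{2-\al}{4}$, which in the $v$-variable is the natural candidate. Theorem~\ref{Th-ill-Line}, translated via $v = D_x^\be u$, rules out well-posedness below this value. For $\al \ge \tfrac{3}{2}$, Corollary~\ref{COR:dNLS} supplies a matching upper bound, so the conjecture holds in this range with $s_0 = \be + \tfrac{2-\al}{4}$. For $1 < \al < \tfrac{3}{2}$, a positive gap remains between the scaling threshold and the current LWP threshold in the negative-regularity regime, and the monotonicity statement above guarantees only that $s_0$ lies inside this gap.

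To close the gap I would refine the trilinear estimate in the low-output-frequency configuration (case (ii) preceding Theorem~\ref{Th-loc-3}), where the present argument is most lossy. Two complementary routes seem natural: a finer dyadic decomposition along the resonant manifold $\om_1+\om_2=\om_3+\om_4$, coupled with a Bona--Smith-type regularisation to approach the endpoint; or a Poincar\'e--Dulac normal-form transformation removing the non-resonant part of the cubic interaction before re-estimating in adapted $U^p$--$V^p$ spaces, in the spirit of Gr\"unrock and Herr. Endpoint-sensitive logarithmic refinements will likely be required to hit $s = s_0$ exactly.

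The principal obstacle is precisely the endpoint $s = s_0$: $X^{s,b}$-theory typically fails by a logarithm at the critical regularity, so modified function spaces become indispensable. Equally serious is the upper-endpoint case $\be = \tfrac{\al-1}{2}$, in particular the equation \eqref{dNLS-n}, where the nonlocality of $D_x^{2\be}$ obstructs the gauge transformation of \cite{Tak99} used for the classical derivative NLS; treating that boundary will presumably require genuinely new ideas tailored to the nonlocal derivative structure, as the authors themselves anticipate in the discussion following \eqref{dNLS-n}.
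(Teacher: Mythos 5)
This statement is a \emph{conjecture} that the paper explicitly leaves open: the authors only verify it in the restricted range where $2\beta+\frac{2-\alpha}{4}\ge 0$ (combining Theorem~\ref{Th-loc-1} with Theorem~\ref{Th-ill-Line}), and they state outright that one should not expect to prove it in full by the contraction-type arguments of this paper, pointing instead to short-time Fourier restriction norms or refined energy methods as future work. Your proposal is therefore a research programme rather than a proof, and it contains two genuine gaps beyond the admitted speculation at the end. First, the monotonicity/persistence-of-regularity claim is asserted, not established. ``Redistributing derivatives on the Fourier hyperplane'' is exactly the step that fails for $s<0$ in this paper (the inequality \eqref{est-i1} breaks down, which is why Section~\ref{sec-4} exists), and more fundamentally, well-posedness at level $s_1$ by some unspecified method does not hand you a trilinear or energy estimate at level $s_2$ to redistribute. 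Even granting monotonicity, defining $s_0$ as an infimum does not give well-posedness \emph{at} $s=s_0$, whereas the conjecture's ``if and only if $s\ge s_0$'' requires the endpoint to be included; the paper's own discussion of \eqref{dNLS-n} shows the endpoint is precisely where the difficulty lies.

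Second, your identification of $s_0$ with $\beta+\frac{2-\alpha}{4}$ conflates failure of a $\mathcal C^3$ solution map with ill-posedness. Theorem~\ref{Th-ill-Line} only rules out smooth (hence contraction-produced) flow maps below that index; it does not preclude well-posedness with a merely continuous data-to-solution map, obtained for instance by gauge transformation or energy methods --- exactly the situation for the classical derivative NLS \eqref{dNLS} and the modified Benjamin--Ono equation \eqref{mBO} cited in the paper, which are well-posed in regimes where uniform continuity of the flow fails. (As an aside, $\beta+\frac{2-\alpha}{4}$ is not the scaling-critical index for \eqref{MMT-v}; scaling gives $\beta+\frac{1-\alpha}{2}$, which is strictly lower for $\alpha<2$. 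The quantity you name is the $\mathcal C^3$-threshold.) So neither direction of the ``if and only if'' is closed by your argument: the forward direction fails at the endpoint and in the negative-regularity gap for $1<\alpha<\frac32$, and the converse direction would require ill-posedness in a stronger sense than $\mathcal C^3$-failure. The conjecture remains open.
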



More broadly, for a dispersive equation with a dispersion relation of order
$\al$, such as
\[
i\partial_t v + (-\partial_x^2)^{\frac{\al}{2}} v = 0
\qquad\text{or}\qquad
\partial_t v + \mathcal{H}(-\partial_x^2)^{\frac{\al}{2}} v = 0,
\]
where $\mathcal{H}$ denotes the Hilbert transform (the latter being the
classical dispersion generalised Benjamin--Ono equation \cite{CKS-02, ZG-12}), one may
view Conjecture~\ref{CON:well} as imposing a natural constraint on the
strength of derivative nonlinearities: the total number of derivatives falling
on the nonlinearity should not exceed $\al-1$.

Beyond \eqref{dNLS-n}, there are several other examples of endpoint-type
fractional dispersive PDEs of interest. One example is a third-order NLS with
a derivative cubic nonlinearity,
\[
i\partial_t v + D^3 v = \partial_x^2\big(v^3\big),
\]
while the cubic third-order NLS without derivatives has been studied
extensively; see \cite{CP-24, MT-18} and references therein. Other examples include the modified Benjamin--Ono
equation
\begin{align}\label{mBO}
\partial_t v - \mathcal{H}\partial_x^2 v = \partial_x\big(v^3\big),
\end{align}
the modified KdV equation
\[
\partial_t v - \partial_x^3 v = \partial_x^2\big(v^3\big),
\]
and, more generally, higher-order KdV-type equations of the form
\[
\partial_t v - \partial_x^{2k+1} v = \partial_x^{2k}\big(v^3\big),
\qquad k \ge 1 \ \text{an integer}.
\]
Resolving Conjecture~\ref{CON:well} would yield a satisfactory solution theory
for all of the above-mentioned equations.

In the subcritical-derivative regime $-\frac14<\be < \frac{\al-1}{2}$,
Theorems~\ref{Th-loc-1} and \ref{Th-ill-Line} verify Conjecture~\ref{CON:well}
whenever $2\be + \frac{2-\al}{4} \geq 0 $. 
However, in view of the failure of
uniform continuity for the solution maps of the derivative NLS \eqref{dNLS} and
the modified Benjamin–Ono equation \eqref{mBO}, one should not expect to prove
the conjecture in full via a simple contraction argument of the type used in this paper. Instead, techniques such as short-time Fourier restriction norm
methods \cite{MT-22}, or refined energy estimates \cite{MV-15}, may be better suited to handle these cases.
We will address this in our further work.

\begin{remark}\rm
Fractional nonlinear Schr\"odinger (fNLS) equation with derivative nonlinearity appears naturally in fractional quantum mechanics, see for example \cite{ Las-02, NL-2000}. Recently, Kato et al. \cite{KKO-25} considered the IVP for the fNLS equation
\begin{equation}
    \partial_t u +iD^{\alpha}u = F(u, \partial_xu, \overline{u}, \overline{\partial_xu}),
\end{equation}
with general nonlinearity that contains at most the first order derivative term. The authors in \cite{KKO-25} explored the structure of the nonlinearity and obtained a necessary and sufficient condition for the well-posedness of the IVP in the periodic case. It is worth  noticing that the MMT model \eqref{MMTEquation} possesses nonlinearity with a fixed structure and may involve fractional order derivatives. It would be interesting to have an analysis in the sprit of \cite{KKO-25} for the MMT model as well. We plan to pursue this direction in our ongoing project.

\end{remark}

\medskip
  
  \noindent
{\bf Organisation of the paper:} 
The remainder of the paper is organised as follows. 
Section~\ref{sec-2} introduces the function spaces and collects the auxiliary estimates needed for the proofs of our main results. 
In particular, we first state Proposition~\ref{prop-1} without proof, and then prove the well-posedness results—Theorem~\ref{Th-loc-1}, Corollary~\ref{Th-loc-2}, and Theorem~\ref{Th-loc-3}—using Proposition~\ref{prop-1}.
The proof of Proposition~\ref{prop-1} is divided into two sections. 
In Section~\ref{sec-3}, we reduce Proposition~\ref{prop-1} to bilinear estimates via duality and symmetry; establishing these bilinear estimates yields Proposition~\ref{prop-1} for $s \ge 0$. 
Section~\ref{sec-4} is devoted to a trilinear estimate for certain frequency interactions, which is then used to prove Proposition~\ref{prop-1} for $s<0$. 
Finally, the ill-posedness results are proved in Section~\ref{sec-5}. \\

\section{Function spaces and preliminary estimates}\label{sec-2}

In this section, we introduce some notation, function spaces and basic results that will be used throughout the text.  For $s\in \R$, the classical Sobolev spaces are defined by
$$ H^s(\R) = \{f\in \mathcal{S}': \|f\|_{H^s(\R)}:= \|\langle \xi\rangle^s \widehat{f}\|_{L^2(\R)}<\infty\},$$
where $\langle\cdot\rangle = (1+|\cdot|^2)^{\frac12}$  and $\widehat{f}$ is the usual Fourier transform of the function $f$. We define the  group associated to the linear problem by
$$S(t)u_0 = (e^{it|\xi|^{\alpha}}\widehat{u_0})^{\vee}.$$
Also, for $s, b\in \R$, we define the Fourier restriction norm space 
\begin{equation*}
X^{s,b}(\R) = \{f\in \mathcal{S}': \|f\|_{X^{s,b}}:= \|\langle\xi\rangle^s\langle\tau-|\xi|^{\alpha}\rangle^{b}\widehat{f}(\xi, \tau)\|_{L^2(\R^2)}<\infty\}.
\end{equation*}

In what follows, we record some linear and nonlinear estimates satisfied by the solution in  $X^{s,b}$  spaces. First we define a cut-off function $\psi_1 \in C^{\infty}(\R;\; \R^+)$ which is even such that $0\leq \psi_1\leq 1$ and
\begin{equation*}\label{cut-1}
\psi_1(t) = \begin{cases} 1, \quad |t|\leq 1,\\
                          0, \quad |t|\geq 2.
            \end{cases}
\end{equation*}
We also define $\psi_T(t) = \psi_1(t/T)$, for $0< T\leq 1$.

We now move to state some standard results that are crucial in the proof of the local well-posedness result.
\begin{lemma}\label{lemma1}
For any $s, b \in \R$, we have
\begin{equation*}
\|\psi_1S(t)\phi\|_{X^{s,b}}\leq C \|\phi\|_{H^s}.
\end{equation*}
Further, if  $-\frac12<b'\leq 0\leq b<b'+1$ and $0\leq T\leq 1$, then
\begin{equation*}
\|\psi_{T}\int_0^tS(t-t')f(u(t'))dt'\|_{X^{s,b}}\lesssim T^{1-b+b'}\|f(u)\|_{X^{s, b'}}.
\end{equation*}
\end{lemma}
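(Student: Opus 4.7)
Both estimates are standard Bourgain-space linear bounds going back to Bourgain's original $X^{s,b}$ framework, and are essentially identical to those in Ginibre--Ozawa--Velo and Kenig--Ponce--Vega. The plan is to handle the two inequalities separately: the first by a direct Fourier computation, the second by the usual low/high modulation splitting.

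For the homogeneous bound, I would compute the space-time Fourier transform of $\psi_1(t)S(t)\phi(x)$ directly. Since the spatial Fourier transform of $S(t)\phi$ equals $e^{it|\xi|^{\alpha}}\hat\phi(\xi)$, Fubini yields
\begin{equation*}
\mathcal{F}_{x,t}[\psi_1(t)S(t)\phi](\xi,\tau) = \hat\phi(\xi)\,\hat\psi_1(\tau - |\xi|^{\alpha}).
\end{equation*}
Plugging this into the definition of $\|\cdot\|_{X^{s,b}}$ and changing variables $\sigma = \tau - |\xi|^{\alpha}$, the double integral factorises as
\begin{equation*}
\|\psi_1 S(t)\phi\|_{X^{s,b}}^2 = \|\phi\|_{H^s}^2 \cdot \int_\R \langle\sigma\rangle^{2b}|\hat\psi_1(\sigma)|^2 \, d\sigma,
\end{equation*}
and the second factor is finite for every $b \in \R$ because $\psi_1 \in \mathcal{S}(\R)$.

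For the Duhamel bound, let $\tilde F(\xi,\tau)$ denote the full space-time Fourier transform of the forcing $F(x,t) = f(u(x,t))$. A short computation gives the representation
\begin{equation*}
\mathcal{F}_x\!\left[\int_0^t S(t-t')F(t')\,dt'\right]\!(\xi)(t) = \int_\R \tilde F(\xi,\tau)\,\frac{e^{it\tau} - e^{it|\xi|^{\alpha}}}{i(\tau - |\xi|^{\alpha})}\,d\tau.
\end{equation*}
After multiplication by $\psi_T(t)$, the plan is to split this using a smooth cutoff at modulation scale $|\sigma| \sim 1/T$, where $\sigma := \tau - |\xi|^{\alpha}$. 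On the high-modulation piece $|\sigma| \gtrsim 1/T$, the factor $\sigma^{-1}$ is integrable enough that the two exponentials $e^{it\tau}$ and $e^{it|\xi|^{\alpha}}$ can be treated separately: the latter is handled by the first part of the lemma, and the former exploits the fact that $\psi_T(t)e^{it\tau}$ has time-Fourier transform concentrated near $\tau$ at scale $1/T$. On the low-modulation piece $|\sigma| \lesssim 1/T$, I would Taylor-expand $e^{it\sigma} - 1 = \sum_{k \geq 1}(it\sigma)^k/k!$; each term factors as $(t^k\psi_T(t))\cdot S(t)(\cdot)$, and the elementary bound $\|t^k\psi_T\|_{H^b_t} \lesssim T^{k+1/2-b}$ produces a convergent geometric series whose sum yields the required factor $T^{1-b+b'}$.

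The main obstacle is the scaling bookkeeping. The conditions $-\tfrac12 < b' \leq 0 \leq b < b'+1$ are tuned precisely so that $\sigma^{-1}$ is integrable against $\langle\sigma\rangle^{2(b-b')-2}\,d\sigma$ in the high-modulation regime (requiring $b - b' < 1$) and so that the Taylor expansion converges with the correct net power of $T$ in the low-modulation regime (where the lower bound $b' > -\tfrac12$ and $b \geq 0$ are needed). Since these computations are entirely standard and carried out in detail in several references (e.g.\ Tao's monograph on nonlinear dispersive equations), I would invoke them rather than reproduce every step.
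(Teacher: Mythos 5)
Your proposal is correct, and it is essentially the same argument as the paper's: the paper proves Lemma~\ref{lemma1} only by citing the standard reference \cite{GTV}, and your sketch (direct Fourier factorisation for the homogeneous bound, and the low/high modulation splitting with Taylor expansion of $e^{it\sigma}-1$ for the Duhamel term) is precisely the argument carried out there. The bookkeeping you describe for the exponent $T^{1-b+b'}$ and the role of the hypotheses $-\tfrac12<b'\le 0\le b<b'+1$ is accurate.
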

\begin{proof}
The proof of this lemma is now standard; see, for instance, \cite{GTV}.
\end{proof}

\begin{remark} In the proof of the local well-posedness results, we will take $b'=-\frac12+2\epsilon$ and $b=\frac12+\epsilon$ so that $1-b+b'$ is strictly positive.
\end{remark}

Now, we state the trilinear estimate that is central in our argument.

\begin{proposition}\label{prop-1}
Let $1<\alpha\leq 2$, $-\frac{1}{4} < \beta < \frac{\alpha-1}{2}$, b and $b'$ be as in Lemma \ref{lemma1}. Then  the following trilinear estimate
\begin{equation}\label{tlinear}
\|D_x^\beta(|D_x^\beta u|^2 D_x^\beta u )\|_{X^{s,b'}(\R)}\lesssim  \|u\|_{X^{s,b}(\R)}^3,
\end{equation}
holds in the following cases:
\begin{itemize}
\item $s \geq 2\beta + \frac{2-\alpha}{4}$ for $\frac18 - \frac{\alpha}{4} < \beta < \frac{\alpha -1}{2}$
\item $s > \beta + \frac{5}{8} - \frac{\alpha}{2}$ for $\frac{7}{8} - \alpha < \beta \leq \frac{1}{8} - \frac{\alpha}{4}$
\item $ s \geq \frac{4}{3}\beta + \frac{2-\alpha}{6}$ for $-\frac14 < \beta \leq \frac{7}{8} - \alpha$.
\end{itemize}
\end{proposition}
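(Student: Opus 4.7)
By Plancherel and duality against a test function $\widehat{f_4}$, the trilinear claim \eqref{tlinear} is equivalent to a quadrilinear multiplier bound
\begin{equation*}
\bigg|\int_{\Gamma} M(\xi,\tau)\prod_{j=1}^{4}\widehat{f_j}(\xi_j,\tau_j)\, d\Gamma\bigg|\lesssim \prod_{j=1}^{4}\|f_j\|_{L^2_{\xi,\tau}},
\end{equation*}
where $\Gamma=\{\xi_1-\xi_2+\xi_3-\xi_4=0,\ \tau_1-\tau_2+\tau_3-\tau_4=0\}$ and
\begin{equation*}
M = \prod_{j=1}^{4}|\xi_j|^{\beta}\cdot\frac{\langle\xi_4\rangle^{s}}{\prod_{j=1}^{3}\langle\xi_j\rangle^{s}}\cdot\frac{\langle\tau_4-|\xi_4|^{\alpha}\rangle^{-b'}}{\prod_{j=1}^{3}\langle\tau_j-|\xi_j|^{\alpha}\rangle^{b}}.
\end{equation*}
A Littlewood--Paley decomposition localising $|\xi_j|\sim N_j$ and $\langle\tau_j-|\xi_j|^{\alpha}\rangle\sim L_j$ reduces the problem to uniform bounds on each dyadic block with a small polynomial gain in $N_{\max}$ and $L_{\max}$. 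Since $\xi_1-\xi_2+\xi_3-\xi_4=0$, the two largest of $N_1,\dots,N_4$ are always comparable, and one splits into two frequency regimes.

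\medskip

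\noindent\emph{Regime (A): $N_4\gtrsim \max_{j\le 3}N_j$.} Here $\xi_4$ sits among the two largest frequencies, so the $\langle\xi_4\rangle^{s}$ weight can be absorbed into those of $\xi_1,\xi_2,\xi_3$: for $s\ge 0$ by \eqref{est-i1}, for $s<0$ by the redistribution \eqref{est-i2}. After this cancellation the four-linear form is controlled by a bilinear $L^2_{t,x}$ estimate of the type
\begin{equation*}
\|P_{N_i}u_i\cdot P_{N_j}u_j\|_{L^2_{t,x}}\lesssim C(N_i,N_j,L_i,L_j)\|u_i\|_{X^{0,b}}\|u_j\|_{X^{0,b}},
\end{equation*}
which is obtained via Tao's $[k,Z]$ multiplier argument, making use of the dispersive lower bound $\big||\xi_i|^{\alpha}-|\xi_j|^{\alpha}-|\xi_i-\xi_j|^{\alpha}\big|\sim N_{\min}^{\alpha-1}N_{\max}$ on the bilinear resonance. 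Optimising the dyadic sum yields Case~(i) over the full range of $s$ and the Case~(iii) threshold after the coarser reshuffle \eqref{est-i2}.

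\medskip

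\noindent\emph{Regime (B): $N_4\ll \max_{j\le 3}N_j$.} Now $N_1\sim N_2\sim N_3\gg N_4$ and, because \eqref{est-i1} fails in the negative regularity regime, the $\langle\xi_4\rangle^{s}$ factor cannot be eliminated. One must bound the full four-linear form directly, which is the content of Section~\ref{sec-4}. The key ingredient is a favourable lower bound on the quartic resonance
\begin{equation*}
\Omega=|\xi_1|^{\alpha}-|\xi_2|^{\alpha}+|\xi_3|^{\alpha}-|\xi_4|^{\alpha}
\end{equation*}
restricted to $\xi_1-\xi_2+\xi_3-\xi_4=0$: on this surface $|\Omega|$ is comparable to a positive power of $N_1$ times an appropriate factor of $N_4$, forcing at least one modulation $L_j$ to be correspondingly large. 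Combining this modulation gain with a bilinear $L^4$ Strichartz bound for two of the high-frequency factors and Bernstein's inequality for the low-frequency factor produces the improved threshold $s>\beta+\tfrac{5}{8}-\tfrac{\alpha}{2}$ of Case~(ii); a less efficient redistribution of the $\langle\xi_j\rangle^{-s}$ weights gives the Case~(iii) threshold.

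\medskip

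\noindent\textbf{Main obstacle.} The decisive difficulty is Regime~(B): the reduction to bilinear estimates fails, and one must extract a sharp lower bound on the fractional resonance $\Omega$ without the explicit algebraic cancellations available when $\alpha=2$. Tracking how this gain trades against the derivative loss $|\xi_1\xi_2\xi_3\xi_4|^{\beta}$ and the unbalanced weight $\langle\xi_4\rangle^{s}$ is precisely what produces the piecewise thresholds in \eqref{s-beta-line}; any $\e$-power gain in $N_{\max}$ and $L_{\max}$ then suffices for the subsequent dyadic summation, and the sharpness of the resulting thresholds is witnessed by the counterexamples in Theorem~\ref{Th-ill-Line}.
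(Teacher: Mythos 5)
Your overall architecture — duality to a $[4;\R\times\R]$ multiplier bound, splitting according to whether $|\xi_4|$ is comparable to or much smaller than the other frequencies, reducing to a bilinear estimate in the first regime and attacking the quadrilinear form directly in the second — is exactly the paper's. However, there are concrete problems. First, your stated dispersive lower bound $\bigl||\xi_i|^{\alpha}-|\xi_j|^{\alpha}-|\xi_i-\xi_j|^{\alpha}\bigr|\sim N_{\min}^{\alpha-1}N_{\max}$ has the exponents on the wrong frequencies: a mean value theorem computation (take $\xi_i=N\gg\xi_j=\epsilon$) gives $\sim \alpha N^{\alpha-1}\epsilon$, i.e.\ $N_{\max}^{\alpha-1}N_{\min}$, and for $1<\alpha<2$ your version strictly overestimates the resonance gain. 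Since this lower bound is what forces $\Lmax\gtrsim H$ and drives all of the dyadic bookkeeping, carrying the wrong formula through would produce incorrect (too favourable) thresholds; the paper uses $\Nmax^{\alpha-1}\Nmin\lesssim H\lesssim\Nmax^{\alpha}$ throughout. Relatedly, you misplace \eqref{est-i2}: it is needed precisely when $|\xi_4|\ll|\xi_j|$ for all $j$ (your Regime (B)), not when $N_4$ is among the largest frequencies, where a full cancellation already works.

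Second, the decisive step — producing the threshold $s>\beta+\tfrac58-\tfrac{\alpha}{2}$ in the regime $|\xi_4|=|\xi_{\min}|\ll|\xi_{\mathrm{med}}|$ — is asserted rather than proved. You propose a bilinear $L^4$ Strichartz bound plus Bernstein, but you do not show that this yields the specific gains $\Nmin^{1/4}\Nmax^{3/4-\alpha/2}$ (and $\Nmin^{1/6}\Nmed^{1/6}\Nmax^{2/3-\alpha/2}$ in the balanced case $N_1\sim N_3\sim\Nmax$) from which the $\tfrac58$ numerology actually follows; the paper obtains these by box-localising the $[4;\R\times\R]$ multiplier, measuring level sets of the quartic resonance via the mean value theorem in each of the cases $L_j\sim\Lmax$, and interpolating against the trivial counting bound $(\Lmin\Lmed\Nmin\Nmed)^{1/2}$. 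Note also that the resonance lower bound in this regime is $\tilde H\gtrsim N^{\alpha-1}N_{\mathrm{med}}$, not ``a factor of $N_4$'' as you suggest. Finally, your plan gives no account of where the hypothesis $\beta>-\tfrac14$ enters: for $\beta<0$ the symbols $|\xi_j|^{\beta}$ are singular at the origin and must be replaced by $\langle\xi_j\rangle^{\beta}$ via Tao's smoothing lemma for $[k;Z]$ multipliers, and it is exactly this step that imposes $\beta>-\tfrac14$. Without these ingredients the plan does not yet constitute a proof of the stated piecewise thresholds.
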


The proof of this proposition will be provided below in a separate section; Section \ref{sec-3} for $s \ge 0$ cases and Section \ref{sec-4} for $s < 0$ cases. 
For the sake of completeness, we use Proposition~\ref{prop-1} to prove the local well-posedness results stated above. 

\begin{proof}[Proof of Theorem \ref{Th-loc-1}, Corollary \ref{Th-loc-2}, and Theorem \ref{Th-loc-3}] Let $1<\alpha\leq2$, $s_{\beta, \alpha}$ as defined in \eqref{s-beta-line},  $s\geq s_{\beta,\alpha}$ and $u_0\in H^s$. Using the linear estimates from Lemma~\ref{lemma1} and trilinear estimate from Proposition~\ref{prop-1} with $b=\frac12+\varepsilon$ and $b'=-\frac12+2\varepsilon$, by standard argument we can prove that the application
$$\Phi_t(u) = S(t) u_0-\int_0^t S(t-t') D_x^\beta(|D_x^\beta u|^2 D_x^\beta u) dt'$$
is a contraction map in an appropriate ball of $H^s\cap X^{s,b}_T$ for appropriate $0<T\leq 1$. So, we omit the details.
\end{proof}

\section{Bilinear estimates}
\label{sec-3}

In this section, 
we prove Proposition \ref{prop-1} with non-negative regularity $s \ge 0$ by reducing it to a bilinear estimate.
Before providing a proof for Proposition \ref{prop-1}, we will prove a bilinear estimate that plays a fundamental role. To achieve this, we will follow the multipliers techniques introduced by Tao in \cite{TT-01}. For the sake of completeness, we start by recording some notation and settings from \cite{TT-01}.  For any $k\geq 2$ and any abelian additive group $G:= \R\times\R$ with invariant measure $d\xi$, we denote by $\Gamma_k(G)$ the hyperplane defined by
$$\Gamma_k(G):= \{\xi=(\xi_1, \xi_2,\cdots, \xi_k)\in G^k :  \xi_1+\xi_2+\cdots + \xi_k=0\},$$
endowed with the  measure
$$\int_{\Gamma_k(G)}f:= \int_{G^{k-1}}f(\xi_1, \xi_2, \cdots, \xi_{k-1}, -\xi_1-\xi_2-\cdots -\xi_{k-1}) d\xi_1\cdots d\xi_{k-1}.$$
We call any function $m: \Gamma_k(G):\to\C$ a $[k; G]$-multiplier. Given such a multiplier $m$, we define its norm $\|m\|_{[k;G]}$ as the best constant such that, for all test functions $f_j\in G$,  the following inequality holds
$$\Big|\int_{\Gamma_k(G)}  m(\xi)\prod_{j=1}^kf_j(\xi_j)\Big|\leq \|m\|_{[k;G]}\prod_{j=1}^k \|f_j\|_{L^2(G)}.$$

For some basic properties of this operator norm $\|m\|_{[k;G]}$, we refer readers to \cite[Section 3]{TT-01}. Now we record some more notations and summation conventions that will be used throughout this text. We will use $N_j$, $L_j$  and $H$ to denote dyadic variables. 
For $N_1, N_2, N_3 >0$, we define $N_{\max}\geq N_{\med}\geq N_{\min}$ as the maximum, medium and minimum of  $N_1, N_2, N_3$. Similarly, we define $L_{\max}\geq L_{\med}\geq L_{\min}$ in the case when  $L_1, L_2, L_3>0$.

While proving a bilinear estimate in the $X^{s,b}$ spaces, as will be clear later, we need to find $\|\widetilde{m}\|_{[3; G]}$-norm of the following multiplier
$$\widetilde{m}:= \frac{m(\xi_1, \xi_2, \xi_3)}{\prod_{j=1}^3\langle\lambda_j\rangle^{b_j}},$$
where $\lambda_j:=\tau_j- h_j(\xi_j)$ and $h_j(\xi_j)$ is the dispersion relation under consideration.

 As described in \cite{TT-01}, using the comparison principle and the averaging argument to estimate the $\|\widetilde{m}\|_{[3; G]}$-norm, one can suppose $|\lambda_j|\ges 1$ and $\max(|\xi_1|, |\xi_2|, |\xi_3|)\ges 1$.  Performing the dyadic decomposition of the variables $\xi_j$, $\lambda_j$ and 
 \begin{align} \label{H}
 h(\xi) = h_1(\xi_1)+ h_2(\xi_2)+h_3(\xi_3),
 \end{align}
 one has that
 \begin{equation*}
 \|\widetilde{m}\|_{[3; G]}\les \Big\| \sum_{N_{\max}\ges 1}\sum_{H}\sum_{L_1, L_2, L_3\ges 1}\frac{\mathbf{m}(N_1,N_2, N_3)}{L_1^{b_1}L_2^{b_2}L_3^{b_3}}\chi_{N_1,N_2,N_3,H,L_1, L_2,L_3}\Big\|_{[3; G]},
 \end{equation*}
 where $\chi_{N_1,N_2,N_3,H,L_1, L_2,L_3}$ is the multiplier
 \begin{equation}\label{char-1}
 \chi_{N_1,N_2,N_3,H,L_1, L_2,L_3}(\xi, \tau) := \chi_{|h(\xi)|\sim H}\prod_{j=1}^3\chi_{|\xi_j|\sim N_j} \chi_{|\lambda_j|\sim L_j}
 \end{equation}
 and 
 $$\mathbf{m}(N_1,N_2, N_3):= \sup_{|\xi_j|\sim N_j, \forall\, j=1,2,3}|m(\xi_1, \xi_2, \xi_3)|.$$

Note that,  on the support of the multipliers, we have $\xi_1+\xi_2+\xi_3=0$, $\tau_1+\tau_2+\tau_3=0$ and the resonance  relation $h(\xi) +\lambda_1 + \lambda_2 + \lambda_3=0$. In view of these relations, the multiplier $ \chi_{N_1,N_2,N_3,H,L_1, L_2,L_3}(\xi, \tau) $ vanishes unless 
\begin{equation*}
\Nmax \sim \Nmed \hspace{1cm}\text{and} \hspace{1cm} \Lmax \sim \max(H, \Lmed).
\end{equation*}
As described above, considering  $N_1 \geq N_2 \geq N_3$,  we have $N_1 \sim N_2 \gtrsim 1$. Therefore, using triangle inequality and Schur's test,  to compute $\|\widetilde{m}\|_{[3; G]}$, one needs to estimate
\begin{equation*}
 \sup_{N\gtrsim 1}  \sum_{\Nmax \sim \Nmed \sim N} \sum_H \sum_{\Lmax \sim \max(H, \Lmed)} \frac{\mathbf{m}(N_1,N_2, N_3)}{L_1^{b_1}L_2^{b_2}L_3^{b_3}} \big\|  \chi_{N_1,N_2,N_3;H;L_1,L_2,L_3} \big\|_{[3; G]}.
 \end{equation*}

In this way, estimates for the multiplier $ \chi_{N_1,N_2,N_3,H,L_1, L_2,L_3}(\xi, \tau) $ play a crucial role in this argument. In the following proposition, we record the estimates from \cite{CHKL-15} for the dispersion relation $h_j(\xi_j) = \pm |\xi_j|^\alpha$, $1<\alpha \leq 2$. Before stating the result, let us first note that from the resonance relation $h(\xi_1, \xi_2, \xi_3) = |\xi_1|^\alpha - |\xi_2|^\alpha + |\xi_3|^\alpha$, it is true that $\Nmax^{\alpha-1}\Nmin \lesssim H \lesssim \Nmax^\alpha$. 

\begin{prop}[Bilinear multiplier Estimates] \label{CountingEst}
Let $H, N_1,N_2,N_3, L_1, L_2, L_3$ be dyadic numbers and  for $1<\alpha \leq2 $, $h(\xi) = |\xi_1|^\alpha - |\xi_2|^\alpha + |\xi_3|^\alpha$ be the resonance relation. Then 
we have the following
\begin{itemize}
\item If $H \sim \Lmax \sim L_1$ and $N_1 \sim \Nmax$,
\[\big\| \chi_{N_1,N_2,N_3;H;L_1,L_2,L_3} \big\|_{[3; \R\times\R]}\lesssim \Lmin^\frac{1}{2} \min(\Nmin^\frac{1}{2}, \Nmax^{\frac{1-\alpha}{2}}\Lmed^\frac{1}{2}).\]

\item If $H \sim \Lmax \sim L_1$ and $N_2 \sim N_3 \gg N_1$, 
\[\big\| \chi_{N_1,N_2,N_3;H;L_1,L_2,L_3} \big\|_{[3; \R\times\R]} \lesssim \Lmin^\frac{1}{2} \min(\Nmin^\frac{1}{2}, \Nmax^{\frac{2-\alpha}{2}}\Nmin^{-\frac{1}{2}}\Lmed^\frac{1}{2}).\]

\item If $H \sim \Lmax \sim L_2$ and $\Nmax \sim \Nmin$, 
\[\big\| \chi_{N_1,N_2,N_3;H;L_1,L_2,L_3} \big\|_{[3; \R\times\R]} \lesssim \Lmin^\frac{1}{2}\Nmin^\frac{1}{2}.\]

\item If $H \sim \Lmax \sim L_2$ and $\Nmax \sim \Nmed \gg \Nmin$, 
\[ \big\| \chi_{N_1,N_2,N_3;H;L_1,L_2,L_3} \big\|_{[3; \R\times\R]} \lesssim \Lmin^\frac{1}{2} \min(\Nmin^\frac{1}{2}, \Nmax^{\frac{1-\alpha}{2}} \Lmed^\frac{1}{2}). \]

\item If $H \ll \Lmax \sim \Lmed$, 
\[ \big\| \chi_{N_1,N_2,N_3;H;L_1,L_2,L_3} \big\|_{[3; \R\times\R]} \lesssim \Lmin^\frac{1}{2}  \Nmin^\frac{1}{2}.\]
\end{itemize}
By symmetry, the same estimates hold for the case $H \sim \Lmax \sim L_3$
\end{prop}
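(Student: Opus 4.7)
The plan is to derive the five bounds by Tao's multilinear multiplier machinery \cite{TT-01}, following closely the fiber-counting scheme of \cite[Proposition 3.1]{CHKL-15}, which treats precisely the dispersion relation $h_j(\xi_j)=\pm|\xi_j|^\alpha$. By a standard Cauchy--Schwarz reduction in the dual formulation of $\|\cdot\|_{[3;\R\times\R]}$,
\begin{equation*}
\|\chi_{E}\|_{[3;\R\times\R]}^{2}\;\les\;\min_{j\in\{1,2,3\}}\sup_{(\xi_j,\tau_j)}\,\big|E(\xi_j,\tau_j)\big|,
\end{equation*}
where $E(\xi_j,\tau_j)$ denotes the planar fiber of the support above the fixed $j$-th coordinate pair. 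Thus the task reduces to estimating this fiber measure under the dyadic conditions $|\xi_j|\sim N_j$, $|\lambda_j|\sim L_j$, and $|h(\xi)|\sim H$, for the optimal choice of $j$ in each configuration.

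Within a fiber, I first integrate out the free modulation coordinate: using $\tau_1+\tau_2+\tau_3=0$, the two remaining conditions on $|\lambda_k|$ force the free $\tau$-variable to lie in a band of length $\les \Lmin$, yielding the universal prefactor $\Lmin^{1/2}$ in every bound. The remaining one-dimensional spatial integral (after using $\xi_1+\xi_2+\xi_3=0$) can be controlled in two complementary ways: the box conditions alone give the trivial bound $\Nmin$ (explaining the first entry $\Nmin^{1/2}$ inside each minimum), while the resonance constraint $|h|\sim H$ restricts the free $\xi$-variable to a level set of measure $\les H/|\pa_\xi h|$. The replacement of $H$ by $\Lmed$ in the stated bounds is then obtained from the standard identity $\Lmax\sim\max(H,\Lmed)$, combined with a short case distinction on whether $\Lmax\gg\Lmed$ (forcing $H\sim\Lmax$) or $\Lmax\sim\Lmed$ (in which case $H\les\Lmed$ automatically).

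The essential derivative computation is the following: on the support surface $\xi_3=-\xi_1-\xi_2$,
\begin{equation*}
\pa_{\xi_2}h\;=\;-\alpha\,\sgn(\xi_2)|\xi_2|^{\alpha-1}\;-\;\alpha\,\sgn(\xi_3)|\xi_3|^{\alpha-1},
\end{equation*}
and analogously for $\pa_{\xi_1}h$. In the generic regimes $N_1\sim\Nmax$ or $\Nmax\sim\Nmin$, either the two summands add constructively or only one is of leading size, so $|\pa_\xi h|\sim \Nmax^{\alpha-1}$, giving the $\xi$-measure bound $\les\Nmax^{1-\alpha}\Lmed$; this produces the first, third, and fourth bullets. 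In the ``anti-parallel'' sector $N_2\sim N_3\gg N_1$, one has $\xi_2\approx-\xi_3$ with opposite signs, so the two leading terms cancel; writing $\sgn(\xi_2)|\xi_2|^{\alpha-1}+\sgn(\xi_3)|\xi_3|^{\alpha-1}=\sgn(\xi_2)\bigl(|\xi_2|^{\alpha-1}-|\xi_3|^{\alpha-1}\bigr)$ and invoking the mean-value theorem with $\bigl||\xi_2|-|\xi_3|\bigr|=|\xi_1|\sim\Nmin$ gives the improved Jacobian $|\pa_\xi h|\sim\Nmin\,\Nmax^{\alpha-2}$, and hence the sharper bound $\Nmax^{2-\alpha}\Nmin^{-1}\Lmed$ of the second bullet. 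In the off-resonance sector $H\ll\Lmax\sim\Lmed$, the resonance surface offers no additional saving and only the trivial spatial bound $\Nmin$ is retained, producing the fifth bullet; the symmetric case $\Lmax\sim L_3$ is identical upon re-labelling which coordinate fiber is chosen.

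The principal technical hurdle will be the Jacobian analysis in the anti-parallel sub-case, where the naive bound $\Nmax^{\alpha-1}$ is too weak by a factor $\Nmax/\Nmin$. Recovering this factor requires exploiting the sign cancellation $\sgn(\xi_2)=-\sgn(\xi_3)$ through the mean-value estimate $\bigl||\xi_2|^{\alpha-1}-|\xi_3|^{\alpha-1}\bigr|\les \Nmax^{\alpha-2}\Nmin$, which depends crucially on $1<\alpha\le 2$: for $\alpha=1$ the function $|\xi|^{\alpha-1}$ degenerates to a constant and the cancellation becomes total (failing the needed quantitative control), while for $\alpha=2$ the estimate reduces to $\bigl||\xi_2|-|\xi_3|\bigr|\sim\Nmin$ itself. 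A secondary bookkeeping issue is correctly tracking, in each of the four ``on-resonance'' sectors, which modulation sizes get identified with $\Lmin$ and $\Lmed$ and choosing the fiber accordingly so that the outer minimum over $j\in\{1,2,3\}$ is attained.
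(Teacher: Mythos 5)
The paper itself offers no proof of this proposition --- it is recorded verbatim from \cite[Proposition 3.1]{CHKL-15}, which in turn follows Tao's $[k;Z]$ box-counting scheme --- and your reconstruction follows exactly that scheme: Cauchy--Schwarz down to a fiber, a factor $\Lmin^{1/2}$ from the modulations, and a lower bound on the Jacobian of the resonance function, with the mean-value cancellation in the anti-parallel sector $N_2\sim N_3\gg N_1$ correctly identified (including the sign analysis and the role of $1<\alpha\le 2$). So the architecture is right; the problem is in the step that produces the $\Lmed$ factor.

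You assert that the constraint $|h|\sim H$ confines the free $\xi$-variable to a level set of measure $\lesssim H/|\partial_\xi h|$ and that $H$ can then be traded for $\Lmed$ via $\Lmax\sim\max(H,\Lmed)$. In the main on-resonance regime $H\sim\Lmax\gg\Lmed$ this trade goes the wrong way: your mechanism yields $\Lmin^{1/2}(\Lmax/|\partial_\xi h|)^{1/2}$, weaker than the asserted $\Lmin^{1/2}(\Lmed/|\partial_\xi h|)^{1/2}$ by $(\Lmax/\Lmed)^{1/2}$, so as written the refined bounds in bullets one, two and four are not established. The correct source of $\Lmed$ is finer than the dyadic level set of $h$: after fixing the fiber over the coordinate with $L_j=\Lmax$, the identity $\lambda_1+\lambda_2+\lambda_3=-h(\xi)$ together with the two remaining constraints $|\lambda_k|\sim L_k$ confines $g(\xi):=h_{k_1}(\xi_{k_1})+h_{k_2}(\xi_{k_2})$ to an interval of length $O(\Lmed)$ about a \emph{fixed} centre determined by the fiber, whence the measure is $\lesssim \Lmed/\inf|g'|$; this is precisely the mechanism of Lemma~\ref{CountingEstLem} in the quadrilinear setting. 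Two further corrections: your claim that $|\partial_\xi h|\sim\Nmax^{\alpha-1}$ in the regime $\Nmax\sim\Nmin$ is false --- with $L_2=\Lmax$ the Jacobian $\sgn(\xi_1)|\xi_1|^{\alpha-1}-\sgn(\xi_3)|\xi_3|^{\alpha-1}$ vanishes at $\xi_1=\xi_3$, which is admissible when all frequencies are comparable, and this degeneration is exactly why the third bullet carries only the trivial bound. And the deferred ``bookkeeping'' is not cosmetic: in the second bullet the $\Lmax$-fiber has spatial extent $\sim\Nmax$, so the entry $\Nmin^{1/2}$ of the minimum must come from fibering over a different coordinate (one of $\xi_2,\xi_3$, chosen according to which of $L_2,L_3$ is $\Lmin$); the two entries of each minimum generally arise from different choices of $j$.
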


We now move to prove the bilinear estimate that will lead to the proof of Proposition~\ref{prop-1} for non-negative regularity, the key ingredient for the proof of Theorem~\ref{Th-loc-1} and Corollary~\ref{Th-loc-2}, as announced earlier. 
\begin{prop}\label{bilinear} Let $1<\alpha\leq2$, $-\frac{1}{4} \le \beta < \frac{\alpha-1}{2}$, $\widehat{J_x^{\beta}u}(\xi)=\langle\xi\rangle^{\beta}\widehat{u}(\xi)$ and $\varepsilon>0$ arbitrarily small. Then for all $s\geq 2\beta + \frac{2-\alpha}{4}$ the following bilinear estimate holds
\begin{equation}\label{bil-1}
\|J_x^{\beta}u J_x^{\beta}v\|_{L^2(\R)} =\|J_x^{\beta}u \overline{J_x^ {\beta}v}\|_{L^2(\R)} \leq \|u\|_{X^{0, \frac12-2\epsilon}(\R)}\|v\|_{X^{s,\frac12+\epsilon}(\R)}.
\end{equation}
\end{prop}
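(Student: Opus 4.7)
The plan is to reduce the bilinear estimate to a trilinear multiplier estimate on the hyperplane $\{\xi_1+\xi_2+\xi_3=0,\ \tau_1+\tau_2+\tau_3=0\}$ by duality, and then to verify it using Tao's $[k;G]$-norm framework together with the counting estimates of Proposition~\ref{CountingEst}. First I would write
$$\|J_x^\beta u\cdot\overline{J_x^\beta v}\|_{L^2_{x,t}}=\sup_{\|h\|_{L^2}=1}\left|\int(J_x^\beta u)(x,t)\,\overline{(J_x^\beta v)(x,t)}\,h(x,t)\,dx\,dt\right|,$$
pass to the Fourier side, and change variables $(\xi_2,\tau_2)\mapsto(-\xi_2,-\tau_2)$ to absorb the conjugation on $v$. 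After $L^2$-normalising via
$$U:=\langle\lambda_1\rangle^{\frac12-2\epsilon}\widehat u,\quad V:=\langle\xi_2\rangle^s\langle\lambda_2\rangle^{\frac12+\epsilon}\widehat v,\quad W:=\widehat h,$$
Proposition~\ref{bilinear} reduces to the multiplier bound
$$\left\|\frac{\langle\xi_1\rangle^\beta\langle\xi_2\rangle^{\beta-s}}{\langle\lambda_1\rangle^{\frac12-2\epsilon}\langle\lambda_2\rangle^{\frac12+\epsilon}}\right\|_{[3;\R\times\R]}\lesssim 1,$$
with dispersions $h_1(\xi_1)=|\xi_1|^\alpha$, $h_2(\xi_2)=-|\xi_2|^\alpha$, $h_3\equiv 0$, and resonance $H=|\xi_1|^\alpha-|\xi_2|^\alpha$.

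Next I would perform the standard dyadic decomposition $|\xi_j|\sim N_j$, $|\lambda_j|\sim L_j$ described in the excerpt. The constraint $\xi_1+\xi_2+\xi_3=0$ forces $\Nmax\sim\Nmed$, while the resonance identity forces $\Lmax\sim\max(H,\Lmed)$. The argument then splits into three frequency regimes: (i) high-high-high $N_1\sim N_2\sim N_3$, (ii) high-high-low $N_1\sim N_2\gg N_3$, and (iii) low-high-high, namely the two symmetric cases in which the low frequency sits on $u$ or on $\overline v$. In each regime I further split by which $L_j$ is dominant and by whether $\Lmax\sim H$ or $\Lmax\gg H$; for every resulting dyadic block, the corresponding case of Proposition~\ref{CountingEst} provides the bound on $\|\chi_{N_1,N_2,N_3;H;L_1,L_2,L_3}\|_{[3;G]}$. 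Multiplying by the weight $N_1^\beta N_2^{\beta-s}L_1^{-(1/2-2\epsilon)}L_2^{-(1/2+\epsilon)}$ and summing over $N_j$, $L_j$, and $H$ produces the desired uniform bound, the $\epsilon$-gain coming from $b=\frac12+\epsilon$ and $b'=-\frac12+2\epsilon$ being used to absorb all logarithmic losses from the dyadic sums and Schur's test.

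The main obstacle is the high-high interaction where $u$ and $\overline v$ carry comparable high frequencies, combined with the worst modulation configuration $L_3=\Lmax$: since $W$ is only in $L^2$ and carries no Bourgain weight, no factor $\langle\lambda_3\rangle^{-b_3}$ is available to offset the derivative loss $N^{2\beta-s}$. In this regime one must optimise the two branches of the counting bound
$$\|\chi_{N_1,N_2,N_3;H;L_1,L_2,L_3}\|_{[3;G]}\lesssim\Lmin^{\frac12}\min\!\bigl(\Nmin^{\frac12},\,\Nmax^{\frac{1-\alpha}{2}}\Lmed^{\frac12}\bigr),$$
whose balanced choice of $\Lmed$ against the available $H\sim\Nmax^\alpha$-budget yields precisely the sharp regularity threshold $s\geq 2\beta+\frac{2-\alpha}{4}$; the assumption $\beta<\frac{\alpha-1}{2}$ is exactly what guarantees that the dispersive gain from $\langle\lambda_1\rangle^{-1/2}\langle\lambda_2\rangle^{-1/2}$ dominates the derivative loss $\langle\xi_1\rangle^\beta\langle\xi_2\rangle^\beta$, closing the estimate.
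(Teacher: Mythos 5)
Your overall strategy coincides with the paper's: dualise, reduce to a $[3;\R\times\R]$-multiplier bound, decompose dyadically in $\xi_j$, $\lambda_j$ and $H$, and run the case analysis through Proposition~\ref{CountingEst} with Schur's test, the $\epsilon$-room in $b,b'$ absorbing the logarithmic losses. However, two concrete points in your plan would prevent the argument from closing as written.

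First, you assign the dual variable the trivial dispersion $h_3\equiv 0$ and take $H=|\xi_1|^\alpha-|\xi_2|^\alpha$. Proposition~\ref{CountingEst} is proved for the resonance function $h(\xi)=|\xi_1|^\alpha-|\xi_2|^\alpha+|\xi_3|^\alpha$, and the paper accordingly foliates the dual $L^2$ function by the modulation $\lambda_3=\tau_3-h_3(\xi_3)$ with $h_3(\xi_3)=|\xi_3|^\alpha$ (this is why $L_3$ appears in the sums even though no weight $L_3^{-b}$ is available). With that choice one has the two-sided bound $\Nmax^{\alpha-1}\Nmin\les H\les\Nmax^{\alpha}$, which is used throughout: it forces $\Nmin\ges N^{1-\alpha}$ in every case with $\Lmax\sim H$ and supplies the decay $\Lmax^{-\frac12+2\epsilon}\les(N^{\alpha-1}\Nmin)^{-\frac12+2\epsilon}$. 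With your $H=|\xi_1|^\alpha-|\xi_2|^\alpha$ this lower bound fails ($H$ vanishes identically on $\xi_1=\xi_2$ with $\xi_3$ arbitrary), and the counting estimates you intend to quote no longer apply.

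Second, your account of where the threshold $s\ge 2\beta+\frac{2-\alpha}{4}$ comes from does not hold up. In the critical block $N_1\sim N_2\sim\Nmax\gg N_3=\Nmin$ with $L_3=\Lmax\sim H$, one has $H\sim\Nmax^{\alpha-1}\Nmin$ (not $\Nmax^{\alpha}$), and the applicable counting bound is the second bullet of Proposition~\ref{CountingEst} (by symmetry), namely
\begin{equation}
\big\|\chi_{N_1,N_2,N_3;H;L_1,L_2,L_3}\big\|_{[3;\R\times\R]}\les \Lmin^{\frac12}\min\big(\Nmin^{\frac12},\;\Nmax^{\frac{2-\alpha}{2}}\Nmin^{-\frac12}\Lmed^{\frac12}\big),
\end{equation}
not the first-bullet bound $\Lmin^{\frac12}\min(\Nmin^{\frac12},\Nmax^{\frac{1-\alpha}{2}}\Lmed^{\frac12})$ that you wrote. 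Had the latter been valid in that configuration, the sum would close at $s\ge 2\beta+\frac{1-\alpha}{2}$, which is strictly below $2\beta+\frac{2-\alpha}{4}$ and would contradict the sharpness established in Theorem~\ref{Th-ill-Line}. The true threshold arises from the $\Nmin$-sum rather than from balancing $\Lmed$ against an $H$-budget: one sums $\Nmin^{\frac12}$ over $\Nmin\les\Nmax^{\frac{2-\alpha}{2}}$ (the crossover of the two branches above at $\Lmed\sim1$), picking up $\Nmax^{\frac{2-\alpha}{4}}$ against the weight $\Nmax^{2\beta-s}$. These are the two details you would need to repair before the computation goes through.
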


\begin{remark}
\rm 
When $\beta = 0$, Proposition \ref{bilinear} was previously established in \cite[Proposition 3.2]{CHKL-15}. In this work, we extend the result to a broader range of $\beta$. In particular, when $\beta > 0$, estimate \eqref{bil-1} exhibits a `smoothing' effect, which enables us to treat derivative nonlinearities in \eqref{MMTEquation}. For fixed $\alpha$, the upper bound $\beta < \frac{\alpha - 1}{2}$ is expected to be sharp; see Theorem \ref{Th-ill-Line} for a counterexample in the case $\beta > \frac{\alpha - 1}{2}$.
\end{remark}

\begin{proof}

Using the duality and standard properties of the Fourier transform, the estimate \eqref{bil-1} reduces to bounding the following multiplier expression:
\begin{equation} \label{multiplier}
\bigg\| \frac{\langle \xi_1\rangle^\beta \langle\xi_2\rangle^\beta}{\abrac{\xi_1}^s \abrac{\tau_1 - |\xi_1|^\alpha}^{\frac{1}{2}+\varepsilon} \abrac{\tau_2 + |\xi_2|^\alpha}^{\frac{1}{2}-2\varepsilon}} \bigg\|_{[3; \R\times\R ]}.
\end{equation}

For the dyadic numbers $N_j$, $L_j$ and $H$, recall the multiplier $\chi_{N_1,N_2,N_3;H;L_1,L_2,L_3}$ given in \eqref{char-1} with $h_j(\xi_j) = \pm |\xi_j|^\alpha$ and $\lambda_j := \tau_j - h_j(\xi_j)$.
Then, for estimating \eqref{multiplier}, it is instead sufficient to estimate 
\begin{equation} \label{CharNormSum}
 \Bigg\| \sum_{\Nmax \geq \Nmed \geq \Nmin} \sum_{\Lmax \geq \Lmed \geq \Lmin \gtrsim 1} \sum_{H} \frac{\abrac{N_1}^\beta \abrac{N_2}^\beta}{\abrac{N_1}^s L_1^{\frac{1}{2}+\varepsilon} L_2^{\frac{1}{2}-2\varepsilon}}  \chi_{N_1,N_2,N_3;H;L_1,L_2,L_3} \Bigg\|_{[3; \R\times\R]}. 
\end{equation}
From the identities $\xi_1 + \xi_2 + \xi_3 = 0 $ and $\lambda_1 + \lambda_2 + \lambda_3 + h(\xi_1, \xi_2, \xi_3) = 0$, we see the multiplier \eqref{char-1} vanishes unless 
\begin{equation}\label{cond-1}
\Nmax \sim \Nmed \hspace{1cm}\text{and} \hspace{1cm} \Lmax \sim \max(H, \Lmed).
\end{equation}


\noi 
For the resonance relation \eqref{H}, 
it follows that 
\begin{align} \label{H2}
\Nmax^{\alpha-1}\Nmin \lesssim H \lesssim \Nmax^\alpha.
\end{align}
See Lemma \ref{Reso-Lemma} for a justification of \eqref{H2}.
Using \eqref{cond-1} and Schur's test (Lemma 3.11 in \cite{TT-01}), estimating \eqref{CharNormSum}   reduces  to bounding
\begin{equation}\label{est-m1}
\sup_{N\gtrsim 1}  \sum_{\Nmax \sim \Nmed \sim N} \sum_H \sum_{\Lmax \sim \max(H, \Lmed)} \frac{\abrac{N_1}^\beta \abrac{N_2}^\beta}{\abrac{N_1}^s L_1^{\frac{1}{2}+\varepsilon} L_2^{\frac{1}{2}-2\varepsilon}} \big\|  \chi_{N_1,N_2,N_3;H;L_1,L_2,L_3} \big\|_{[3; \R\times\R]}.
\end{equation}

In what follows, in the light of \eqref{cond-1}, we bound \eqref{est-m1} by estimating the following expressions arising, respectively, from high and low modulation parts
\begin{equation} \label{Hll}
\sum_{\Nmax \sim \Nmed \sim N} \sum_{\Lmax \sim \Lmed \gtrsim 1} \sum_{H \ll \Lmax} \frac{\abrac{N_1}^\beta \abrac{N_2}^\beta}{\abrac{N_1}^s L_1^{\frac{1}{2}+\varepsilon} L_2^{\frac{1}{2}- 2\varepsilon}} \big\|  \chi_{N_1,N_2,N_3;H;L_1,L_2,L_3} \big\|_{[3; \R\times\R]}
\end{equation}
and
\begin{equation}\label{Hsim}
\sum_{\Nmax \sim \Nmed \sim N} \sum_{L_1,L_2,L_3 \gtrsim 1} \sum_{H \sim \Lmax} \frac{\abrac{N_1}^\beta \abrac{N_2}^\beta}{\abrac{N_1}^s L_1^{\frac{1}{2}+\varepsilon} L_2^{\frac{1}{2}- 2\varepsilon}}\big\|  \chi_{N_1,N_2,N_3;H;L_1,L_2,L_3} \big\|_{[3; \R\times\R]}.
\end{equation}

\noi 
{\bf Case 1}.
We start by dealing with the high modulation term \eqref{Hll}. By Proposition \ref{CountingEst}, using that $L_1^{\frac{1}{2}+\varepsilon} L_2^{\frac{1}{2}-2\varepsilon} \gtrsim \Lmin^{\frac{1}{2}+\varepsilon}\Lmed^{\frac{1}{2}-2\varepsilon}$ and taking in consideration $\Lmax \gtrsim \Nmin\Nmax^{\alpha-1}$, we have
\begin{equation}\label{Sum2.2}
\begin{split}
\eqref{Hll} &\lesssim \sum_{\Nmax \sim \Nmed \sim N} \sum_{\Lmax \sim \Lmed \gtrsim N^{\alpha-1}\Nmin } \frac{\abrac{N_1}^\beta \abrac{N_2}^\beta}{\abrac{N_1}^s \Lmin^{\frac{1}{2} + \varepsilon} \Lmed^{\frac{1}{2}-2\varepsilon}}\Lmin^\frac{1}{2}\Nmin^\frac{1}{2} \log \Lmax  \\
& \lesssim \sum_{\Nmax \sim \Nmed \sim N}\sum_{\Lmin\geq 1} \frac{\abrac{N_1}^\beta \abrac{N_2}^\beta}{\abrac{N_1}^s \Lmin^{\varepsilon} \Nmin^{\frac12 - 2\varepsilon}N^{\frac{\alpha-1}{2}-2(\alpha-1)\varepsilon}}\Nmin^\frac{1}{2}N^\varepsilon \\
& \lesssim \sum_{\Nmax \sim \Nmed \sim N}  \frac{\abrac{N_1}^\beta \abrac{N_2}^\beta}{\abrac{N_1}^s N^{\frac{\alpha-1}{2}-2(\alpha-1)\varepsilon}}\Nmin^{2\varepsilon}N^\varepsilon.
\end{split}
\end{equation}

We bound \eqref{Sum2.2}, considering two different cases.\\

\noindent
{\it Case 1.1}.  $N_1 = \Nmin$ or $N_2 = \Nmin$: First consider $N_1 = \Nmin$. With this consideration, we can bound \eqref{Sum2.2} by
\begin{equation}\label{Sum1}
 \eqref{Sum2.2} \les \sum_{\Nmin}  \abrac{\Nmin}^{\beta-s } N^{\beta+\frac{1-\alpha}{2}+(2\alpha+1)\varepsilon}  .
 \end{equation}

If $\Nmin\les 1$, then the sum in \eqref{Sum1} is bounded by $N^{\beta+\frac{1-\alpha}{2}+(2\alpha+1)\varepsilon}$ requiring the condition $\beta<\frac{\alpha-1}{2}$ to obtain the required uniform bound.

If  $\Nmin \gg 1$,  then one can estimate \eqref{Sum1} as
\begin{equation*}
   \eqref{Sum1}\les \begin{cases} N^{2\beta +\frac{1-\alpha}{2} - s + (2\alpha + 1)\varepsilon}, \quad &{\textrm{if}}\; \beta-s\geq 0,\\
   N^{\beta +\frac{1-\alpha}{2}+(2\alpha+1)\varepsilon}, &{\textrm{if}}\; \beta-s< 0,
   \end{cases}
\end{equation*}
which is bounded
requiring condition $s > 2\beta + \frac{1-\alpha}{2}$ and $\beta<\frac{\alpha-1}{2}$ to obtain the desired uniform bound for \eqref{Hll}.

Now consider $N_2 = \Nmin$. In this case, one can control \eqref{Sum2.2} as
\begin{equation}\label{Sum11}
 \eqref{Sum2.2} \les \sum_{\Nmin} \abrac{\Nmin}^{\beta} N^{\beta+\frac{1-\alpha}{2}-s+(2\alpha+1)\varepsilon}
 \les
 \begin{cases}
 N^{2\beta +\frac{1-\alpha}{2}- s + (2\alpha + 1)\varepsilon}, \quad &{\textrm{if}}\; \beta \geq 0 \\
 N^{\beta + \frac{1-\alpha}{2} -s +(2\alpha+1)\varepsilon}, \quad &{\textrm{if}}\; \beta < 0,
 \end{cases}
\end{equation}
leaving conditions $s > 2\beta + \frac{1 -\alpha}{2}$ for $\be \ge 0$, and $s > \be + \frac{1-\al}2$ for $\be < 0$, to get a uniform bound for \eqref{Hsim}. 
In both cases, we have $s \ge 2 \be + \frac{2-\al}4$ for $\beta \in [-\frac14, \frac{\al -1}2)$ provided $\al > 1$, which is sufficient for our purpose.\\

\noindent
{\it Case 1.2}.  $N_1 \sim N_2 \sim \Nmax$: In this case, we have 
\begin{equation} 
 \eqref{Sum2.2} \les N^{2\beta +\frac{1-\alpha}{2} - s + (2\alpha + 1)\varepsilon}
\end{equation} 
requiring only the conditions $s > 2\beta + \frac{1-\alpha}{2}$ to obtain the required uniform bound for \eqref{Hll}.\\

\noi 
{\bf Case 2}.
We now move on to estimate the low modulation term \eqref{Hsim}.\\

\noi 
{\it Case 2.1}. 
We first consider the cases $L_1 = \Lmax$ and $N_1 = \Nmax$. By Proposition \ref{CountingEst}, 
one has
\begin{equation}\label{L1MaxN1Max}
\begin{split}
\eqref{Hsim} 
& \lesssim \sum_{\Nmax \sim \Nmed \sim N} \sum_{L_1, L_2, L_3 \gtrsim 1}\frac{\abrac{N}^\beta \abrac{N_2}^\beta}{\abrac{N}^s \Lmax^{\frac{1}{2}+\varepsilon} L_2^{\frac{1}{2}-2\varepsilon}}\Lmin^\frac{1}{2} \min( \Nmin^\frac{1}{2}, \Nmax^{\frac{1-\alpha}{2}} \Lmed^\frac{1}{2}) \\
& \lesssim \sum_{\Nmax \sim \Nmed \sim N} \sum_{\Lmax \geq \Lmed \gtrsim 1}\frac{\abrac{N}^\beta \abrac{N_2}^\beta}{\abrac{N}^s \Lmax^{\frac{1}{2}+\varepsilon}}\Lmed^{2\varepsilon} \min( \Nmin^\frac{1}{2}, \Nmax^{\frac{1-\alpha}{2}} \Lmed^\frac{1}{2})  \\
& \lesssim \sum_{\Nmax \sim \Nmed \sim N} \sum_{ \Lmed \gtrsim 1}\frac{\abrac{N}^\beta \abrac{N_2}^\beta}{\abrac{N}^s} \Lmed^{\varepsilon} \min( \Nmin^\frac{1}{2}\Lmed^{-\frac{1}{2}}, \Nmax^{\frac{1-\alpha}{2}})   
\end{split}
\end{equation}
summing over $\Lmin$ the $\Lmax$ in the last two lines. \\

To estimate \eqref{L1MaxN1Max}, we  split the $\Nmin$ sum over two sets $\{ 0 < \Nmin < N^{1-\alpha}\}$ and $\{ N^{1-\alpha} \leq \Nmin \lesssim N\}$.
In the first set, the sum in \eqref{L1MaxN1Max} is bounded by
\begin{equation}\label{Sum3.3}
    \sum_{0< \Nmin < N^{1-\alpha}}\sum_{\Lmed  \ges 1}\frac{\abrac{N}^\beta}{\abrac{N}^s}\abrac{N_2}^\beta \Nmin^\frac{1}{2} L_{\med}^{ - \frac12 + \eps} . 
 \end{equation}
 
Then summing over $\Lmed  $ and $\Nmin$, the sum in \eqref{Sum3.3} can be estimated as
\begin{equation*} 
\eqref{Sum3.3}\les \begin{cases} N^{2\beta + \frac{1-\alpha}2 - s  }, \qquad & {\textrm{if}}\; \beta\geq 0,\\
N^{\beta +  \frac{1-\alpha}2 - s}, & {\textrm{if}}\; \beta < 0,
\end{cases}
\end{equation*}
which, similarly to \eqref{Sum11}, is bounded.

For the sum over the second set $\{ N^{1-\alpha} \leq \Nmin \lesssim N\}$, we need to split up the $\Lmed$ sum as follows
\begin{equation*}
\begin{split}
\eqref{L1MaxN1Max} &\lesssim \sum_{N^{1-\alpha} \leq \Nmin \lesssim N}\bigg( \sum_{1 \leq \Lmed < \Nmin N^{\alpha-1}} \frac{\abrac{N}^\beta}{\abrac{N}^s} \abrac{N_2}^\beta N^{\frac{1-\alpha}{2}}\Lmed^{\varepsilon} \\
& \qquad+\sum_{\Lmed \geq \Nmin N^{\alpha-1}} \frac{\abrac{N}^\beta}{\abrac{N}^s}\abrac{N_2}^\beta \Nmin^\frac{1}{2}\Lmed^{-\frac{1}{2}+\varepsilon} \bigg)\\
& \lesssim \sum_{N^{1-\alpha} \leq \Nmin \lesssim N} \frac{\abrac{N}^{\beta+\frac{1-\alpha}{2}}}{\abrac{N}^s}\abrac{N_2}^\beta (\Nmin N^{\alpha-1})^{\varepsilon} \\
&\les 
\begin{cases}
N^{2\beta + \frac{1-\alpha}{2} - s+\alpha\varepsilon}, \qquad & {\textrm{if}}\; \beta\geq 0,\\
N^{\beta +\frac{1-\alpha}{2} - s +\alpha\varepsilon}, & {\textrm{if}}\; \beta < 0,
\end{cases}
\end{split}
\end{equation*}
which is again sufficient for our purpose.\\

\noi 
{\it Case 2.2}.
Now consider the case $L_3 = \Lmax$ and $N_3 = \Nmax$. Since $N^{\alpha-1}\Nmin \sim H \sim L_3 \gtrsim 1$, we can reduce to $\Nmin \gtrsim N^{1-\alpha}$.\\

\noi 
{\it Case 2.2.1}.
If $N_1 = \Nmin$, then
\begin{equation*}
\begin{split} 
\eqref{Hsim} 
& \lesssim \sum_{N^{1-\alpha} \lesssim \Nmin \lesssim N} \sum_{\Lmax \geq \Lmed \geq \Lmin} \frac{\abrac{N}^\beta \abrac{\Nmin}^\beta}{\abrac{\Nmin}^s \Lmin^{\frac{1}{2}+\varepsilon} \Lmed^{\frac{1}{2}-2\varepsilon}}\Lmin^\frac{1}{2}N^{\frac{1-\alpha}{2}}\Lmed^\frac{1}{2} \\
& \lesssim \sum_{N^{1-\alpha} \lesssim \Nmin \lesssim N} \sum_{\Lmed \lesssim \Lmax \lesssim N^\alpha} \frac{\abrac{\Nmin}^\beta}{\abrac{\Nmin}^s}\abrac{N}^\beta \Lmed^{2\varepsilon}N^{\frac{1-\alpha}{2}}\\
&\lesssim
 \sum_{N^{1-\alpha} \lesssim \Nmin \lesssim N} \abrac{\Nmin}^{\beta-s} \abrac{N}^{\beta}N^{\frac{1-\alpha}{2}} N^{2(\alpha-1)\varepsilon}\Nmin^{2\varepsilon}\\
 &\les \begin{cases} N^{\beta + \frac{1-\alpha}{2} +2\alpha\varepsilon} + N^{2\beta +\frac{1-\alpha}{2} - s + 2\alpha\varepsilon}, \quad &{\textrm{if}}\; \beta-s\geq 0,\\
   N^{\beta +\frac{1-\alpha}{2}+2\alpha\varepsilon} &{\textrm{if}}\; \beta-s< 0,
   \end{cases}
\end{split}
\end{equation*}
by considering the sets $\{N^{1-\alpha} \lesssim \Nmin \leq 1\}, \{ 1 \leq \Nmin \lesssim N\}$ separately.
Imposing condition $s > 2\beta + \frac{1-\alpha}{2}$ and $\beta<\frac{\alpha-1}{2}$ to obtain a uniform bound for \eqref{Hsim}.
Therefore, if $\be \ge 0$, there is no restriction on the range of $s$; while for $\be < 0$, we have $s > 2 \be + \frac{1-\al}2 $, which is sufficient again.
\\

\noi 
{\it Case 2.2.2}.
Similarly, if $N_2=\Nmin$, then
\begin{equation*}
\begin{split} 
\eqref{Hsim} &\lesssim
 \sum_{N^{1-\alpha} \lesssim \Nmin \lesssim N} \abrac{\Nmin}^{\beta} \abrac{N}^{\beta-s}N^{\frac{1-\alpha}{2}} N^{2(\alpha-1)\varepsilon} \Nmin^{2\varepsilon}\\
 &\les \begin{cases}  N^{2\beta +\frac{1-\alpha}{2} - s + (2\alpha + 1)\varepsilon}, \quad &{\textrm{if}}\; \beta\geq 0,\\
   N^{\beta + \frac{1-\alpha}{2}-s+ (2\alpha + 1)\varepsilon} &{\textrm{if}}\; \beta< 0,
   \end{cases}
\end{split}
\end{equation*}
yielding a required uniform bound.\\

\noi 
{\it Case 2.3}.
Next, we deal with the case $L_2 = \Lmax$ and $N_2 \sim \Nmax$. In this case, by Proposition~\ref{CountingEst} and using $\Lmax \gtrsim \Nmin\Nmax^{\alpha-1}$, one has
\begin{equation}\label{Sum7}
\begin{split}
\eqref{Hsim} & \lesssim \sum_{\Nmin \lesssim \Nmax \sim N} \sum_{\Lmax \geq \Lmed \geq \Lmin \gtrsim 1}\frac{\abrac{N_1}^{\beta}\abrac{N}^\beta}{\abrac{N_1}^sL_1^{\frac{1}{2}+\varepsilon}\Lmax^{\frac{1}{2}-2\varepsilon}}\Lmin^\frac{1}{2}\Nmin^\frac{1}{2} \\
& \lesssim \sum_{\Nmin \lesssim \Nmax \sim N} \sum_{\Lmax \geq  \Lmin \gtrsim 1}\frac{\abrac{N_1}^{\beta}\abrac{N}^\beta}{\abrac{N_1}^sL_1^{\frac{1}{2}+\varepsilon}\Lmax^{\frac{1}{2}-3\varepsilon}}\Lmin^\frac{1}{2}\Nmin^\frac{1}{2} \\
&\lesssim  \sum_{\Nmin \lesssim \Nmax \sim N}\frac{\abrac{N}^{\beta} \abrac{N_1}^{\beta-s}}{N^{\frac{\alpha-1}{2} - 3(\alpha-1)\varepsilon}\Nmin^{\frac12 - 3\varepsilon}}\Nmin^\frac{1}{2} \\
& \lesssim N^{\beta + \frac{1-\alpha}{2} + 3(\alpha-1)\varepsilon}\sum_{\Nmin \lesssim N} \abrac{N_1}^{\beta-s}\Nmin^{3\varepsilon}. 
\end{split}
\end{equation}

\noi 
{\it Case 2.3.1}.
If $N_1 \sim \Nmax \sim N$, then we can bound \eqref{Sum7} by $N^{2\beta + \frac{1-\alpha}{2} - s + 2\alpha\varepsilon}$
imposing the condition $s > 2\beta + \frac{1-\alpha}{2}$.\\ 

\noi 
{\it Case 2.3.2}.
Otherwise, $N_1 \sim \Nmin$.
We first consider the case where $\Nmin \les 1$.
Then, we have 
$\eqref{Sum7} \lesssim  N^{\beta + \frac{1-\alpha}{2} + 2(\alpha-1)\varepsilon}$. 
When $\Nmin \geq 1$, we have
\begin{equation}
\eqref{Sum7} \lesssim  
\begin{cases}
N^{2\beta + \frac{1-\alpha}{2} - s + 2\alpha\varepsilon} &  \text{ if } \beta - s+ 3\varepsilon \geq 0 \\
N^{\beta + \frac{1-\alpha}{2} + 2(\alpha-1)\varepsilon} & \text{ if } \beta - s + 3\varepsilon < 0.
\end{cases}
\end{equation}
Both cases are sufficient for our purpose.
\\



\noi
{\it Case 2.4}.
Now, we move on to the case $L_1 =\Lmax$ and $N_2 \sim N_3 \gg N_1$. Since $1 \lesssim \Lmax \sim H \sim \Nmin N^{\alpha-1}$ we can reduce to $\Nmin \gtrsim N^{1-\alpha}$. Once again, using Proposition \ref{CountingEst}, we have
\begin{equation*}
\begin{split}
\eqref{Hsim} & \lesssim \sum_{\Nmax \sim \Nmed \sim N} \sum_{\Lmax \geq \Lmed \geq \Lmin \gtrsim 1} \frac{\abrac{\Nmin}^\beta \abrac{N}^\beta}{\abrac{\Nmin}^s L_1^{\frac{1}{2}+\varepsilon} L_2^{\frac{1}{2}-2\varepsilon}}\Lmin^\frac{1}{2}\Nmin^\frac{1}{2} \\
& \lesssim \sum_{\substack{\Nmax \sim \Nmed \sim N \\ \Nmin \gtrsim N^{1-\alpha}}} \frac{\abrac{\Nmin}^\beta \abrac{N}^\beta}{\abrac{\Nmin}^s (N^{\alpha-1}\Nmin)^{\frac{1}{2}+\varepsilon}} \Nmin^\frac{1}{2}(N^{\alpha-1}\Nmin)^{2\varepsilon}\\
& \lesssim  \sum_{\Nmin } \abrac{\Nmin}^{\beta-s} \abrac{N}^\beta N^{\frac{1-\alpha}{2}}(N^{\alpha-1}\Nmin)^{\varepsilon}\\
&\les \begin{cases} N^{2\beta +\frac{1-\alpha}{2} - s + 2\alpha\varepsilon}, \quad &{\textrm{if}}\; \beta-s\geq 0,\\
N^{\beta +\frac{1-\alpha}{2}+2\alpha\varepsilon} &{\textrm{if}}\; \beta-s< 0,
\end{cases}
\end{split}
\end{equation*}
which is again sufficient for our purpose.\\

\noi
{\it Case 2.5}.
Next, we consider the cases $L_2 = \Lmax$ and $N_3 \sim N_1 \gg N_2$. In this case, again we have $1 \lesssim L_2 \sim N^{\alpha-1}\Nmin$, hence $\Nmin \gtrsim N^{1-\alpha}$. Furthermore, in this case $L_2 \sim H \sim N^\alpha$.
By Proposition \ref{CountingEst}, summing over $\Lmax$ and $\Lmin$ first, we have
\begin{equation*}
\begin{split}
\eqref{Hsim} & \lesssim \sum_{\Nmax \sim \Nmed \sim N}  \sum_{\Lmed \geq \Lmed \gtrsim 1} \frac{\abrac{N}^\beta \abrac{\Nmin}^\beta }{\abrac{N}^s L_1^{\frac{1}{2}+\varepsilon}\Lmax^{\frac{1}{2}-2\varepsilon}} \Lmin^\frac{1}{2} \Nmin^\frac{1}{2} \\
& \lesssim \sum_{\substack{\Nmax \sim \Nmed \sim N \\  \Nmin \gtrsim N^{1-\alpha}}} \frac{\abrac{N}^\beta \abrac{\Nmin}^\beta}{\abrac{N}^s (N^{\alpha})^{\frac{1}{2}-2\varepsilon}} \Nmin^{\frac{1}{2}} \log(N) \\
&\les \begin{cases} N^{2\beta +\frac{1-\alpha}{2} - s + (2\alpha\varepsilon+\eps)}, \quad &{\textrm{if}}\; \beta\geq 0,\\
   N^{\beta + \frac{1-\alpha}{2} -s +(2\alpha\varepsilon + \eps)} + N^{2\beta + \frac{1-\alpha}{2}-s+(2\alpha\varepsilon+\eps)} &{\textrm{if}}\; \beta< 0.
   \end{cases}
\end{split}
\end{equation*}
Note again for $\beta < 0$ we have split into cases $\Nmin \leq 1$ and $\Nmin \geq 1$ to get each summand respectively. We complete the proof of this case. \\



\noi
{\it Case 2.6}.
Finally, we consider the case of $L_3 = \Lmax$ and $N_1 \sim N_2 \gg N_3$. Here again $1 \lesssim L_3 \sim H \sim N^{\alpha-1}\Nmin$ so we restrict to $\Nmin \gtrsim N^{1-\alpha}$. By Proposition \ref{CountingEst}, we have
\begin{equation}\label{Sum5.1}
\begin{split}
\eqref{Hsim} & \lesssim \sum_{\substack{\Nmax \sim \Nmed \sim N \\ \Nmin \gtrsim N^{1-\alpha}}} \sum_{\Lmed \geq \Lmin \gtrsim 1}\frac{\abrac{N}^{2\beta}  \Lmin^\frac{1}{2} \min(\Nmin^\frac{1}{2}, N^{\frac{2-\alpha}{2}}\Nmin^{-\frac{1}{2}}\Lmed^\frac{1}{2})}{\abrac{N}^s L_1^{\frac{1}{2}+\varepsilon} L_2^{\frac{1}{2}-2\varepsilon}} \\
& \lesssim \sum_{\substack{\Nmax \sim \Nmed \sim N \\ \Nmin \gtrsim N^{1-\alpha}}} \sum_{\Lmed \gtrsim 1} \frac{\abrac{N}^{2\beta}  \min(\Nmin^\frac{1}{2}, N^{\frac{2-\alpha}{2}}\Nmin^{-\frac{1}{2}}\Lmed^\frac{1}{2})}{\abrac{N}^s \Lmed^{\frac{1}{2}-2\varepsilon}}.
\end{split}
\end{equation} 

\noi
{\it Case 2.6.1}.
Firstly, we consider when $\{N^{1-\alpha} \leq \Nmin \leq N^{\frac{2- \alpha}{2}}\}$.
The sum \eqref{Sum5.1} in this case  can be estimated by
\begin{equation*}
 \eqref{Sum5.1}\lesssim\sum_{\Nmin \lesssim N^{\frac{2-\alpha}{2}}} \frac{\abrac{N}^{2\beta}}{\abrac{N}^s}\Nmin^\frac{1}{2}  \lesssim N^{2\beta + \frac{2-\alpha}{4} - s},
\end{equation*}
imposing condition $s \geq 2\beta + \frac{2 -\alpha}{4}$ to obtain a uniform bound.\\

\noi
{\it Case 2.6.2}.
 For the case when  $\{N^{\frac{2-\alpha}{2}}  < \Nmin \lesssim N\}$, we split up condition on $\Lmed$ into $\{ 1 \leq \Lmed \leq N^{\alpha-2}\Nmin^2\}$ and $\{ \Lmed > N^{\alpha-2}\Nmin^2\}$. We consider each of these cases separately. For the first case, we estimate \eqref{Sum5.1} as
\begin{equation*}
\begin{split}
\eqref{Sum5.1}& \lesssim \sum_{N^{\frac{2-\alpha}{2}} \leq \Nmin \lesssim N} \sum_{ \Lmed \lesssim N^{\alpha-2}\Nmin^2 } \frac{\abrac{N}^{2\beta}}{\abrac{N}^s} N^{\frac{2-\alpha}{2}}\Nmin^{-\frac{1}{2}}\Lmed^{2\varepsilon}  \\
& \lesssim  N^{2\beta + \frac{2-\alpha}{2} -s} N^{\frac{\alpha-2}{4}} N^{8\varepsilon}  \leq N^{2\beta +\frac{2-\alpha}{4} - s}. 
\end{split}
\end{equation*}
which is sufficient. 
While for the last case, one can bound \eqref{Sum5.1} as
\begin{equation*}
\begin{split}
\eqref{Sum5.1}& \lesssim \sum_{N^{\frac{2-\alpha}{2}} \leq \Nmin \lesssim N} \sum_{N^{\alpha-2}\Nmin^2 \lesssim \Lmed} \frac{\abrac{N}^{2\beta}}{\abrac{N}^s \Lmed^{\frac{1}{2}-2\varepsilon}} \Nmin^\frac{1}{2}  \\
& \lesssim \sum_{N^{\frac{2-\alpha}{2}} \leq \Nmin \lesssim N} \frac{\abrac{N}^{2\beta}}{\abrac{N}^s (N^{\alpha-2} \Nmin^2)^{\frac{1}{2}-2\varepsilon}} \Nmin^\frac{1}{2}  \\
& \leq N^{2\beta +\frac{2-\alpha}{4} - s} 
\end{split}
\end{equation*}
again arriving at the desired uniform bound.

Gathering all the cases, we complete the proof of the proposition.
\end{proof}

\begin{remark}\label{Bilin-T} \rm
It is worth noting that one may attempt to establish an analogous bilinear estimate in the periodic setting as well. 
When $\beta = 0$, this was carried out in \cite{CHKL-15}, following essentially the same argument as on the real line. 
However, in the periodic setting, the lack of symmetry in the multiplier when $\beta \neq 0$ introduces an additional difficulty: the analogue of \eqref{Hsim} in the case $L_3 = \Lmax$ and $N_3 = \Nmax$ becomes more challenging to control. 
In particular, when $\be > 0$,
\eqref{MMTEquation} has a derivative nonlinearity,
we do not expect that the argument here can apply in the periodic setting.
We shall develop a different approach for the periodic setting in our forthcoming paper.
\end{remark}

Now, we move on to proving the trilinear estimates \eqref{tlinear}  stated in Proposition \ref{prop-1}. 
We first address the non-negative regularity case $ s\ge 0$ and leave the negative regularity case $s < 0$ for the next section.
Our strategy is to reduce the argument to establishing the key bilinear estimates in Proposition \ref{bilinear}.

\begin{proof}[Proof of Proposition \ref{prop-1} with $s \ge 0$]
Using duality and Fourier transform properties, proving the trilinear estimate \eqref{tlinear} reduces to proving  
\begin{equation}\label{tln-e6}
\Big|\int_{\substack{\xi_1+\cdots+\xi_4=0\\ \tau_1+\cdots+\tau_4=0}} m(\xi_1,\tau_1,\cdots,\xi_4,\tau_4)\prod_{j=1}^4{f}_j(\xi_j, \tau_j)\Big| \lesssim \prod_{j=1}^4\|f_j\|_{L^2_{\xi\tau}},
\end{equation}
where
\begin{equation*}
\begin{split}
&f_1(\xi_1, \tau_1):= \langle\xi_1\rangle^s\langle\tau_1-|\xi_1|^{\alpha}\rangle^{\frac12+\epsilon}\widehat{u}(\xi_1,\tau_1)\\
&f_2(\xi_2, \tau_2):= \langle\xi_2\rangle^s\langle\tau_2+|\xi_2|^{\alpha}\rangle^{\frac12+\epsilon}\widehat{\bar{u}}(\xi_2,\tau_2)\\
&f_3(\xi_3, \tau_3):= \langle\xi_3\rangle^s\langle\tau_3-|\xi_3|^{\alpha}\rangle^{\frac12+\epsilon}\widehat{u}(\xi_3,\tau_3)\\
&f_4(\xi_4, \tau_4):= \langle\xi_4\rangle^{-s}\langle\tau_4+|\xi_4|^{\alpha}\rangle^{\frac12-2\epsilon}\widehat{\bar{v}}(\xi_4,\tau_4).
\end{split}
\end{equation*}
and the multiplier $m(\xi_1,\tau_1,\dots,\xi_4,\tau_4)$ is given by
\begin{equation}\label{tln-e7}
\frac{\langle\xi_4\rangle^{s}\,|\xi_1|^{\beta}|\xi_2|^{\beta} |\xi_3|^{\beta} |\xi_4|^{\beta}}{\langle\tau_1-|\xi_1|^{\alpha}\rangle^{\frac12+\epsilon}\langle\tau_2+|\xi_2|^{\alpha}\rangle^{\frac12+\epsilon}\langle\tau_3-|\xi_3|^{\alpha}\rangle^{\frac12+\epsilon}\langle\tau_4+|\xi_4|^{\alpha}\rangle^{\frac12-2\epsilon}\prod_{i=1}^3 \langle \xi_i \rangle^s}.
\end{equation}

We may further rewrite \eqref{tln-e6} as
\begin{equation}\label{tln-e8}
\|m\|_{[4;\R\times\R]}\lesssim 1.
\end{equation}

Note that $\xi_1+\xi_2+\xi_3+\xi_4 =0$ implies
\begin{equation*}
\langle\xi_4\rangle^s\leq\sum_{j=1}^3\langle\xi_j\rangle^s\quad\mathrm{ for}\quad s\geq 0.
\end{equation*}
Therefore, if $|\xi_2| = \max_{j=1, 2, 3} |\xi_j|$, then $\langle\xi_4\rangle^s\les \langle\xi_2\rangle^s$ and consequently, we have
\begin{equation}\label{mult-1}
\begin{split}
 m&\leq \frac{|\xi_1|^{\beta}|\xi_2|^{\beta}}{\langle\xi_1\rangle^s\langle\tau_1-|\xi_1|^{\alpha}\rangle^{\frac12+\epsilon}\langle\tau_2+|\xi_2|^{\alpha}\rangle^{\frac12-2\epsilon}}\cdot \frac{|\xi_3|^{\beta}|\xi_4|^{\beta}}{\langle\xi_3\rangle^s \langle\tau_3-|\xi_3|^{\alpha}\rangle^{\frac12+\epsilon} \langle\tau_4+|\xi_4|^{\alpha}\rangle^{\frac12-2\epsilon}}\\
       & =: \mathfrak{m}(\xi_1,\xi_2, \tau_1, \tau_2) \cdot \mathfrak{m}(\xi_3,\xi_4, \tau_3,\tau_4),
 \end{split}
\end{equation}
where we used the relation $\langle\tau_2+|\xi_2|^{\alpha}\rangle^{\frac12-2\epsilon}\leq \langle\tau_2+|\xi_2|^{\alpha}\rangle^{\frac12+\epsilon}$.

Therefore, using the comparison principle, permutation and composition properties (see respectively Lemmas 3.1, 3.3 and 3.7 in \cite{TT-01}), it is enough to bound $\| \mathfrak{m}\|_{[3;Z\times\R]}$, defined above in \eqref{mult-1}. Furthermore, if we define
\begin{equation}
\Tilde{\mathfrak{m}}(\xi_1,\xi_2,\tau_2,\tau_3) := \bigg(\frac{\abrac{\xi_1}}{|\xi_1|}\bigg)^\beta\bigg(\frac{\abrac{\xi_2}}{|\xi_2|}\bigg)^\beta \mathfrak{m}(\xi_1,\xi_2,\tau_1,\tau_2),
\end{equation}
then for $\beta > 0$, $ \mathfrak{m} \leq \Tilde{\mathfrak{m}}$. By the comparison principle, it is now sufficient to show $\| \Tilde{\mathfrak{m}} \|_{[3;  \R^2]} \lesssim 1$. With the argument from \cite{TT-01}, described earlier, $\|\Tilde{\mathfrak{m}}\|_{[3;Z\times\R]}\lesssim 1$, is implied by showing the bilinear estimate \eqref{bil-1} proved in Proposition \ref{bilinear}. When $\beta < 0$, we can also reduce to the above, see Remark \ref{SmoothingTao}, as long as we impose that $\beta  > -\frac14$.
Similarly, when either $|\xi_1|$ or $|\xi_3|$ is the largest among $|\xi_j|$, $j=1,2,3$, the argument once again reduces to the same bilinear estimates. Since these follow by essentially identical reasoning, we omit the details.
\end{proof}

\section{Trilinear Estimate and negative regularity}\label{sec-4}
In this section, we focus on proving Proposition \ref{prop-1} for negative regularity viz., the  proof of \eqref{tln-e8} for $s<0$. Since we wish to show the trilinear estimate directly, it would help to smooth out the singularities that arise when $\beta < 0$. The following allows us to replace $D_x^\beta$ with $J_x^\beta$, when bounding the $[4;\R \times \R]$-multiplier norm.
\begin{lem} \label{SmoothingEst}
Let $m$ be a $[4; \R \times \R]$-multiplier, and $\gamma < \frac14$. Then
\begin{equation}
\big\| \prod_{j=1}^4 \left( \frac{\abrac{\xi_j}}{|\xi_j|}\right)^\gamma m \big\|_{[4; \R \times \R]} \lesssim \| \widetilde{m}\|_{[4;\R \times \R]}
\end{equation}
where the implicit constant depends only on $\gamma$, and
\[ \widetilde{m}(\xi, \tau) := \sup_{\substack{\xi' = \xi + O(1) \\ \tau' = \tau + O(1)}} |m(\xi', \tau')|.\]
\end{lem}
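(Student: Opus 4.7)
The plan is to combine a dyadic decomposition of each test function near the frequency origin with a unit-scale tile decomposition in $\tau$, apply a bilinear convolution estimate on each resulting tile, and sum back up via the $[4]$-multiplier norm of $\widetilde m$. Fix test functions $f_j$ with $\|f_j\|_{L^2}=1$ and decompose $f_j = \sum_{k_j \ge 0} f_j^{(k_j)}$, where $f_j^{(k_j)}$ is localised to $|\xi_j|\sim 2^{-k_j}$ for $k_j\ge 1$ and to $|\xi_j|\gtrsim 1$ for $k_j=0$; on the $k_j$-th shell one has the pointwise bound $(\langle\xi_j\rangle/|\xi_j|)^\gamma\lesssim 2^{k_j\gamma}$. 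Further split each $f_j^{(k_j)}$ by unit $\tau$-intervals (and by unit $\xi$-intervals when $k_j=0$), so that each piece $f_{j,k_j,t_j}$ is supported on a tile $T_{j,k_j,t_j}$ of $2$D area $\sim 2^{-k_j}$ in $\R\times\R$.

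On each tile configuration, the bilinear convolution estimate (writing $\int_{\Gamma_4}\prod f_{j,k_j,t_j}\,d\mu$ as $\int(f_1*f_3)(\eta)(f_2*f_4)(-\eta)\,d\eta$ after an optimal pairing by $k_j$, and using Young's inequality $\|F*G\|_{L^2}\le\|F\|_{L^1}\|G\|_{L^2}$ together with Cauchy–Schwarz) yields
\[
\Big|\int_{\Gamma_4\cap\prod T_{j,k_j,t_j}}\prod_j f_{j,k_j,t_j}\,d\mu\Big|\lesssim 2^{-(k_{(3)}+k_{(4)})/2}\prod_j\|f_{j,k_j,t_j}\|_{L^2},
\]
where $k_{(3)}\le k_{(4)}$ denote the two largest among $k_1,\ldots,k_4$. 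On the same tile, the definition of $\widetilde m$ gives $|m|\lesssim \widetilde m(\vec\xi^*,\vec t^*)$, up to a multiplicative constant, for any choice of centres $\vec\xi^*\in\prod\{|\xi_j|\sim 2^{-k_j}\}$ and $\vec t^*=(t_j+\tfrac12)$. Setting $c_{j,t_j}:=\|f_{j,k_j,t_j}\|_{L^2}$, the resulting discrete sum is controlled by the continuous multiplier norm:
\[
\sum_{\vec t}\widetilde m(\vec\xi^*,\vec t^*)\prod_j c_{j,t_j}\lesssim \|\widetilde m\|_{[4;\R\times\R]}\prod_j\|f_j^{(k_j)}\|_{L^2},
\]
which follows by testing the $[4]$-norm against $g_j(\xi,\tau):=\chi_{[\xi_j^*-1/2,\xi_j^*+1/2]}(\xi)\sum_{t_j}c_{j,t_j}\chi_{[t_j,t_j+1]}(\tau)$, after selecting $\vec\xi^*$ with $\sum_j\xi_j^*=O(1)$, using that $\widetilde m$ is comparable across unit boxes and that the $\Gamma_4$ matching forces $\sum t_j=O(1)$.

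Combining the pointwise bound, the bilinear gain, and the discretisation, and summing first in $\vec t$ then in $\vec k$, one finds
\[
\Big|\int\prod_j\Big(\frac{\langle\xi_j\rangle}{|\xi_j|}\Big)^{\!\gamma} m\,\prod_j f_j\,d\mu\Big|\lesssim \|\widetilde m\|_{[4]}\sum_{\vec k}2^{(\sum_j k_j)\gamma-(k_{(3)}+k_{(4)})/2}\prod_j\|f_j^{(k_j)}\|_{L^2}.
\]
The elementary inequality $k_{(3)}+k_{(4)}\ge\tfrac12\sum_j k_j$ bounds the exponent by $(\gamma-\tfrac14)\sum_j k_j$. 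The hypothesis $\gamma<\tfrac14$ makes the geometric series $\sum_{k_j\ge 0}2^{2k_j(\gamma-1/4)}$ finite, and Cauchy–Schwarz in each $k_j$ separately gives $\sum_{k_j}2^{k_j(\gamma-1/4)}\|f_j^{(k_j)}\|_{L^2}\le C_\gamma\|f_j\|_{L^2}$, completing the bound after taking the supremum over the $f_j$'s.

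The main technical obstacle lies in the discretisation step: the implicit constant relating $\sum_{\vec t}\widetilde m(\vec\xi^*,\vec t^*)\prod c_{j,t_j}$ to $\|\widetilde m\|_{[4]}\prod\|f_j^{(k_j)}\|_{L^2}$ must be kept uniform in $\vec k$. This forces the centres $\vec\xi^*$ to be chosen compatibly with the hyperplane constraint and requires a quantitative use of the local near-constancy of $\widetilde m$ on unit neighbourhoods (which is built into its defining supremum). Once this is in place, the threshold $\gamma<\tfrac14$ appearing in the final summation is exactly what the geometric series requires, and is why this assumption is the natural range for the lemma.
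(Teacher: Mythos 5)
Your proposal is correct in its essentials, and it is worth noting that the paper itself gives \emph{no} proof of Lemma \ref{SmoothingEst}: it only states the lemma and, in Remark \ref{SmoothingTao}, points to the trilinear analogue (Corollary 8.2 of \cite{TT-01}, with threshold $\gamma_1+\gamma_2+\gamma_3<\tfrac12$). What you have written is precisely the quadrilinear adaptation of Tao's argument, and you correctly identify why the threshold improves from $\tfrac16$ (equal exponents, trilinear) to $\tfrac14$: with four factors one can split $\int_{\Gamma_4}\prod f_j$ as $\int (f_a*f_b)(f_c*f_d)(-\cdot)$ and extract \emph{two} $L^1$ norms from indices in different pairs, giving the gain $2^{-(k_{(3)}+k_{(4)})/2}\le 2^{-\frac14\sum_j k_j}$, after which the geometric series converges exactly for $\gamma<\tfrac14$. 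The one step on which the whole argument hinges — and which you get right — is that the test functions $g_j$ in the transference back to $\|\widetilde m\|_{[4;\R\times\R]}$ must be spread over \emph{unit} $\xi$-intervals rather than over the actual $2^{-k_j}$-supports of the $f_j^{(k_j)}$; had you used the small intervals, the measure of $\Gamma_4$ intersected with the product of boxes would shrink and the transference loss would exactly cancel the bilinear gain. Two routine points are left implicit and should at least be cited: (i) the comparability in $[4]$-norm of $\widetilde m$ taken with different implied constants in the $O(1)$ (needed both to dominate $|m|$ on a tile by $\widetilde m$ at its centre and to bound $\widetilde m$ at the centre by $\widetilde m$ on the unit test boxes), which follows from the covering/translation arguments of Section 3 of \cite{TT-01}; and (ii) the almost-orthogonal summation over unit $\xi$-tiles for the $k_j=0$ components, which should be folded into the same testing of $\|\widetilde m\|_{[4;\R\times\R]}$ (i.e.\ the $g_j$ should carry coefficients over $\xi$-tiles as well as $\tau$-tiles) rather than summed by hand, since the discrete four-fold convolution sum is not controlled by $\prod_j\ell^2$ norms alone. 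With those standard ingredients supplied, your proof is complete.
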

\begin{remark} \label{SmoothingTao}
A version of Lemma \ref{SmoothingEst} for a $[3;\R \times \R]$-multiplier is of course also true.
\begin{equation}
\big\| \prod_{j=1}^3 \left( \frac{\abrac{\xi_j}}{|\xi_j|}\right)^{\gamma_j}
m \big\|_{[3; \R \times \R]} \lesssim \| \widetilde{m} \|_{[3; \R \times \R]}
\end{equation}
for $\gamma_1 + \gamma_2 + \gamma_3 < \frac{1}{2}$. This is proven as Corollary 8.2 in \cite{TT-01}. 
We remark that the restriction $\gamma < \frac14$ gives the restriction $\be > -\frac14$ in Claim \ref{claim1}.
\end{remark}

An application of Lemma \ref{SmoothingEst} now allows us to bound the $\| \cdot \|_{[4; \R\times\R]}$ norm of \eqref{tln-e7} by the norm of 
\begin{equation}\label{Tri-1}
\begin{aligned}
\widetilde{m} &(\xi_1,\tau_1,\dots,\xi_4,\tau_4) \\
& :=\frac{\langle\xi_4\rangle^{s}\,\abrac{\xi_1}^{\beta}\abrac{\xi_2}^{\beta} \abrac{\xi_3}^{\beta} \abrac{\xi_4}^{\beta}}{\langle\tau_1-|\xi_1|^{\alpha}\rangle^{\frac12+\epsilon}\langle\tau_2+|\xi_2|^{\alpha}\rangle^{\frac12+\epsilon}\langle\tau_3-|\xi_3|^{\alpha}\rangle^{\frac12+\epsilon}\langle\tau_4+|\xi_4|^{\alpha}\rangle^{\frac12-2\epsilon}\prod_{i=1}^3 \langle \xi_i \rangle^s}.
\end{aligned}
\end{equation}
provided we take $\beta  > -\frac14$. Before we start calculating this norm, we first need corresponding trilinear multiplier estimates, as we used in the bilinear estimate. For this, we will end up localizing \eqref{Tri-1} with the multiplier
\begin{equation} \label{Ymultiplier}
Y_{N_j; \Tilde{H}; L_j} = Y_{N_1, N_2, N_3, N_4; \Tilde{H}; L_1,L_2,L_3,L_4} := \chi_{|\Tilde{h}(\xi)| \sim \Tilde{H}} \prod_{j=1}^4 \chi_{|\xi_j| \sim N_j} \chi_{|\lambda_j| \sim L_j}
\end{equation}
where, as before, $\lambda_j = \tau_j - h_j(\xi_j)$, and now with $\Tilde{h}(\xi) := h_1(\xi_1) + h_2(\xi_2) + h_3(\xi_3) + h_4(\xi_4) = |\xi_1|^\alpha - |\xi_2|^\alpha + |\xi_3|^\alpha - |\xi_4|^\alpha$. Let the following be orderings of $N_j$ and $L_j$ according to size;
$\Nmax \geq \Nmean \geq \Nmed \geq \Nmin$ and $\Lmax \geq \Lmean \geq \Lmed \geq \Lmin$. Since $\xi_1 + \xi_2 + \xi_3 +\xi_4 = 0$, we have $\Nmax \sim \Nmean$. Since we need only show \eqref{tln-e9} (see below), we can assume $N_4 = \Nmin \ll \Nmed$ and $\Nmed \geq 1$. Since the role of $\xi_1$ and $\xi_3$ is completely symmetric and at least one must be of size $\Nmax$, we will always assume that $N_1 \sim \Nmax$. This leaves only two cases, either $N_2 \sim \Nmax$ or $N_3 \sim \Nmax$.

To estimate multipliers of the form \eqref{Ymultiplier},  using Corollary 3.13 in \cite{TT-01}, with a finite partition of unity, we can localize as
\begin{equation} \label{local}
\| Y_{N_j, \Tilde{H}, L_j}\|_{[4;\R\times\R]} \lesssim \big\| \chi_{|\Tilde{h}(\xi)| \sim \Tilde{H}} \chi_{|\xi_4 - \xi_4^0| \ll \Nmin} \chi_{|\lambda_4| \sim L_4} \prod_{j=1}^3 \chi_{|\xi_j -\xi_j^0| \ll \Nmed} \chi_{|\lambda_j| \sim L_j} \big\|_{[4; \R\times\R]}
\end{equation}
for some $\xi_j^0$ where $|\xi_1^0| \sim N_1, |\xi_2^0| \sim N_2, |\xi_3^0| \sim N_3, |\xi_4^0| \sim N_4$. It is important to note that the multiplier estimate appearing in \eqref{local} is crucial in our argument as it allows us to eliminate the contribution of $\Lmax$. By Lemma 3.9 in \cite{TT-01}, we have the following characteristic function estimate
\begin{equation} \label{Count}
\| \chi_A(\xi_1) \chi_B(\xi_2) \chi_C(\xi_3) \|_{[4;Z]} \lesssim |\{ (\xi_1,\xi_2) \in A \times B : \xi - \xi_1 -\xi_2 \in C\}|^\frac{1}{2}
\end{equation}
for some $\xi \in Z$ with $\xi_3 =\xi - \xi_1 -\xi_2 $. Generalizing this notion to deal with \eqref{local}, we record the following result.
\begin{lem} \label{CountingEstLem}
Let $A,B,C$ be subsets of $\R$, and order $L_1,L_2,L_3$ such that $L_1 \geq L_2 \geq L_3>0$. Then
\begin{equation} \label{ABC}
\begin{split}
\| \chi_A(\xi_1) \chi_B(\xi_2) \chi_C(\xi_3) \chi_{|\lambda_1| \sim L_1} \chi_{|\lambda_2| \sim L_2} &\chi_{|\lambda_3| \sim L_3} \|_{[4;\R\times\R]} \\ 
&\hspace{-6cm}\lesssim (L_2L_3)^\frac12 |\{ (\xi_1, \xi_2) \in A\times B : h_1(\xi_1) + h_2(\xi_2) + h_3(\xi - \xi_1 - \xi_2) = \tau + O(L_1) \}|^\frac12
\end{split}
\end{equation}
for some $\xi, \tau \in \R$.
\end{lem}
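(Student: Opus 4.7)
The natural approach is to pass to the dual formulation and reduce matters to a direct measure computation on the hyperplane $\Gamma_4 = \Gamma_4(\R\times\R)$. By the definition of the $[4;\R\times\R]$-multiplier norm, the task is to bound
\[
I := \bigg| \int_{\Gamma_4} \chi_A(\xi_1)\chi_B(\xi_2)\chi_C(\xi_3) \prod_{j=1}^3 \chi_{|\lambda_j|\sim L_j} \prod_{j=1}^4 f_j(\xi_j,\tau_j) \bigg|
\]
uniformly over test functions with $\|f_j\|_{L^2}\le 1$. Parameterising $\Gamma_4$ by $(\xi_1,\tau_1,\xi_2,\tau_2,\xi_3,\tau_3)$, this integral is recognised as the $L^2_{\xi,\tau}$ pairing $\int \mathcal{M}(f_1,f_2,f_3)(\xi,\tau)\,\overline{f_4(-\xi,-\tau)}\,d\xi\,d\tau$, where $\mathcal{M}$ is the trilinear convolution-type operator whose kernel is the indicated characteristic function. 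Cauchy--Schwarz then gives $I\le \|\mathcal{M}(f_1,f_2,f_3)\|_{L^2}\|f_4\|_{L^2}$.

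A second application of Cauchy--Schwarz inside the defining integral of $\mathcal{M}$ yields
\[
I^2 \lesssim \Big(\sup_{\xi,\tau\in\R}\mu(\xi,\tau)\Big)\prod_{j=1}^4\|f_j\|_{L^2}^2,
\]
where $\mu(\xi,\tau)$ is the Lebesgue measure of the set of $(\xi_1,\tau_1,\xi_2,\tau_2)\in\R^4$ satisfying $\xi_1\in A$, $\xi_2\in B$, $\xi-\xi_1-\xi_2\in C$, and $|\lambda_j|\sim L_j$ for $j=1,2,3$, with $\lambda_3=(\tau-\tau_1-\tau_2)-h_3(\xi-\xi_1-\xi_2)$. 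The remaining task is therefore to estimate $\mu(\xi,\tau)$ and isolate the factor $L_2L_3$.

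I would perform the $\tau$-integration first, for fixed $(\xi_1,\xi_2)$. For fixed $\tau_2$, the constraint $|\tau_1-h_1(\xi_1)|\sim L_1$ by itself localises $\tau_1$ to an interval of length $\sim L_1$, but intersecting this with the $L_3$-thickening from $|\lambda_3|\sim L_3$ restricts $\tau_1$ to an interval of length $\lesssim L_3$. This is where the ordering hypothesis $L_1\ge L_2\ge L_3$ is essential: the smaller modulation $L_3$ dominates the intersection. The subsequent $\tau_2$-integration is controlled by $|\tau_2-h_2(\xi_2)|\sim L_2$, contributing a factor $\sim L_2$, so the full $\tau$-integration yields $\lesssim L_2L_3$. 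For the residual $(\xi_1,\xi_2)$-integration, summing the three modulation constraints forces
\[
|h_1(\xi_1)+h_2(\xi_2)+h_3(\xi-\xi_1-\xi_2)-\tau|\lesssim L_1,
\]
which is precisely the resonance restriction defining the set on the right-hand side of \eqref{ABC}.

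The main technical delicacy lies in ordering the modulation integrations correctly: one must first exploit the \emph{smallest} modulation $L_3$ to trim the $L_1$-interval for $\tau_1$, rather than the reverse, in order to achieve the $(L_2L_3)^{1/2}$ gain. The largest modulation $L_1$ then appears only through the width of the resonance neighbourhood governing the remaining frequency counting, which produces the stated estimate.
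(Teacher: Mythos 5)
Your proposal is correct and follows essentially the same route as the paper: the double Cauchy--Schwarz you carry out is exactly the content of the characteristic-function estimate \eqref{Count} plus Schur's test that the paper invokes, and your $\tau$-integration (trimming the $\tau_1$-interval by the $L_3$ constraint, then integrating $\tau_2$ over an $L_2$-interval, with the resonance condition $h_1+h_2+h_3=\tau+O(L_1)$ from summing the three modulations) is the same area-$O(L_2L_3)$ computation the paper performs on the rectangle of admissible $(\tau_1,\tau_2)$.
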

\begin{proof}
By \eqref{Count} and the Schur's test, 
the LHS of \eqref{ABC} can be estimated by
\begin{equation} 
\begin{split}
|\{(\xi_1,\tau_1), (\xi_2,\tau_2) \in \R \times \R & : \xi_1 \in A; \xi_2 \in B; \xi - \xi_1 - \xi_2 \in C; \tau_1 = h_1(\xi_1) + O(L_1);  \\
&\tau_2 = h_2(\xi_2) + O(L_2); \tau - \tau_1 - \tau_2 = h_3(\xi - \xi_1 - \xi_2) + O(L_3) \}|^\frac12.
\end{split}
\end{equation}
For fixed $\xi_1,\xi_2$, the possible values of $(\tau_1,\tau_2)$ ranges over a rectangle of area $O(L_2L_3)$, but vanishes unless $h_1(\xi_1) + h_2(\xi_2) + h_3(\xi- \xi_1 - \xi_2) = \tau + O(L_1)$. Hence, we can combine this information to achieve the result.
\end{proof}

The following relation satisfied by the resonance relationship will also be useful in our argument.

\begin{lem}\label{Reso-Lemma}
For $1 < \alpha \leq 2$. Consider the resonance relation 
\[ \Tilde{h}(\xi_1, \xi_2, \xi_3, \xi_4) = |\xi_1|^\alpha - |\xi_2|^\alpha + |\xi_3|^\alpha -|\xi_4|^\alpha,\]
under the condition $\xi_1 + \xi_2 + \xi_3 + \xi_4 = 0$. Then one has the frequency bound
\begin{equation} \label{Resonance}
|\Tilde{h}| \gtrsim |\xi_1 + \xi_2||\xi_2 + \xi_3||\xi_{\text{max}}|^{\alpha-2},
\end{equation}
where $|\xi_\text{max}| := \max\{ |\xi_1|, |\xi_2|,|\xi_3|,|\xi_4|\}$.
\end{lem}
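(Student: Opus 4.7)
The plan is to encode the constraint $\xi_1 + \xi_2 + \xi_3 + \xi_4 = 0$ by setting $a := \xi_1 + \xi_2 = -(\xi_3 + \xi_4)$, so that $\xi_1 = a - \xi_2$ and $\xi_4 = -a - \xi_3$. Writing $f(x) = |x|^\alpha$ and using the evenness $f(-x) = f(x)$ gives
\[
\Tilde h = [f(a-\xi_2) - f(-\xi_2)] - [f(a+\xi_3) - f(\xi_3)].
\]
Two successive applications of the fundamental theorem of calculus, first in a parameter $t$ interpolating $0$ to $a$, and then in a parameter $s$ interpolating $-\xi_2$ to $\xi_3$, rewrite this as
\[
\Tilde h \;=\; -\iint_R f''(t+s)\,ds\,dt,
\]
where $R$ is the oriented rectangle $[0,a] \times [-\xi_2, \xi_3]$, whose side lengths are exactly $|\xi_1 + \xi_2|$ and $|\xi_2 + \xi_3|$.

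For $1 < \alpha \leq 2$ one has $f''(x) = \alpha(\alpha-1)|x|^{\alpha-2}$, which is pointwise positive. Thus $|\Tilde h|$ equals the honest (unsigned) integral of $f''$ over the unoriented rectangle of the same area. On that rectangle, $|t| \leq |\xi_1+\xi_2| \leq 2|\xi_{\max}|$ and $|s| \leq \max(|\xi_2|, |\xi_3|) \leq |\xi_{\max}|$, so $|t+s| \lesssim |\xi_{\max}|$. Because $\alpha - 2 \leq 0$, this upper bound on $|t+s|$ translates to a \emph{lower} bound $|t+s|^{\alpha-2} \gtrsim |\xi_{\max}|^{\alpha-2}$ uniformly on $R$. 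Multiplying by the area $|\xi_1+\xi_2||\xi_2+\xi_3|$ yields \eqref{Resonance}.

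The only subtlety is the integrability of $f''$ near the diagonal $t+s = 0$, which is automatic since $\alpha - 2 > -1$; this is precisely the hypothesis $\alpha > 1$. The upper endpoint $\alpha \leq 2$ is essential because it makes $f''$ a decreasing function of $|x|$, so that an upper bound on $|t+s|$ yields the pointwise lower bound on $f''$ we need; for $\alpha > 2$ the inequality would reverse. The degenerate cases $\xi_1+\xi_2 = 0$ and $\xi_2+\xi_3 = 0$ are trivial since the right-hand side of \eqref{Resonance} vanishes. I do not anticipate any real obstacle beyond a careful bookkeeping of signs in the double-integral representation.
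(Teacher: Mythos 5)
Your proof is correct and follows essentially the same route as the paper: both arguments use the constraint $\xi_1+\xi_2+\xi_3+\xi_4=0$ together with evenness of $f(x)=|x|^\alpha$ to write $\Tilde h$ as a second-order difference, apply the fundamental theorem of calculus twice to obtain a double integral of $f''$ over a rectangle of side lengths $|\xi_1+\xi_2|$ and $|\xi_2+\xi_3|$, and then use the positivity and monotonicity of $f''(x)=\alpha(\alpha-1)|x|^{\alpha-2}$ for $1<\alpha\le 2$ to bound it below pointwise by $|\xi_{\max}|^{\alpha-2}$. Your write-up is in fact slightly more complete than the paper's, which leaves the second differencing step implicit.
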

\begin{proof}
 For simplicity, define the function $f(x) = |x|^\alpha$ for short. By the mean value theorem and $\xi_1 + \xi_2 + \xi_3 + \xi_4 = 0$
\begin{equation} \label{ResA1}
|\xi_1|^\alpha - |\xi_2|^\alpha = (\xi_3 + \xi_4)\int_0^1 f'(\xi_2 + (\xi_3 + \xi_4)\theta ) d\theta 
\end{equation}
\begin{equation} \label{ResA2}
|\xi_3|^\alpha - |\xi_4|^\alpha =  (\xi_1 + \xi_2)\int_0^1 f'(\xi_4 + (\xi_1 + \xi_2)\theta) d\theta = - (\xi_3 + \xi_4)\int_0^1 f'(-\xi_3 + (\xi_3 +  \xi_4)\theta) d\theta.
\end{equation}

Hence, combining \eqref{ResA1} and \eqref{ResA2}, we obtain the required relation \eqref{Resonance}.  
\end{proof}

We now derive the following estimates on our multiplier defined in \eqref{Ymultiplier}.

\begin{prop}[Trilinear multiplier estimates] \label{TrilinearCountingEst}
Let $\Tilde{H}, N_1,N_2,N_3,N_4,L_1,L_2,L_3,L_4$ be dyadic numbers such that $N_4 \ll N_1,N_2,N_3$. Let $1 < \alpha \leq 2$, and $\Tilde{h}(\xi) =|\xi_1|^\alpha - |\xi_2|^\alpha + |\xi_3|^\alpha -|\xi_4|^\alpha$ be the resonance relation, with $Y_{N_j; \Tilde{H}; L_j}$ be as defined in \eqref{Ymultiplier}. Then we have the following:
\begin{itemize}
\item If $\Tilde{H} \ll \Lmax \sim \Lmean$,
\begin{equation} \label{HLL}
\| Y_{N_j, \Tilde{H}, L_j}\|_{[4;\R\times\R]}  \lesssim (\Lmin \Lmed \Nmin \Nmed)^\frac12.
\end{equation}

\item If $N_1 \sim N_3 \sim \Nmax$ and $\Tilde{H} \sim \Lmax \sim L_2$ or $\Tilde{H} \sim \Lmax \sim L_4$,
\begin{equation} \label{N1N3max}
\| Y_{N_j, \Tilde{H}, L_j}\|_{[4;\R\times\R]}  \lesssim (\Lmin \Lmed \Lmean)^\frac12 \Nmin^\frac16 \Nmed^\frac{1}{6} \Nmax^{\frac23 - \frac{\alpha}{2}}.
\end{equation}

\item If $N_1 \sim N_3 \sim \Nmax$ or $N_1 \sim N_2 \sim \Nmax\gg N_3$, $\Tilde{H} \sim \Lmax$ and not as above,
\begin{equation} \label{Count3-1}
\| Y_{N_j, \Tilde{H}, L_j}\|_{[4;\R\times\R]}  \lesssim (\Lmin \Lmed \Lmean)^\frac12 \Nmin^\frac{1}{4}\Nmax^{\frac{3}{4}-\frac{\alpha}{2}}.
\end{equation}

\end{itemize}
\end{prop}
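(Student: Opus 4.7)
The plan is to follow the Tao-style scheme of \cite{TT-01}: localize each frequency $\xi_j$ to a small box via the decomposition \eqref{local}, reduce the resulting $[4;\R\times\R]$-norm to an $L^2$-counting problem through Lemma \ref{CountingEstLem}, and then extract the frequency decay by bounding the measure of the resonance set using Lemma \ref{Reso-Lemma}. Throughout, the identity $\lambda_1+\lambda_2+\lambda_3+\lambda_4 = -\Tilde{h}$ dictates which single modulation is effectively ``consumed'' by the resonance, leaving the remaining modulations to supply the $L^2$ prefactor.

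For \eqref{HLL} (the low-resonance case $\Tilde H \ll \Lmax \sim \Lmean$), the main observation is that $\Lmax$ is essentially forced by the other three modulations, since $|\lambda_1+\lambda_2+\lambda_3+\lambda_4| \lesssim \Tilde H \ll \Lmax$ ties $\Lmax$ and $\Lmean$ together modulo $O(\Tilde H)$. Hence only $\Lmin$ and $\Lmed$ supply independent $L^2$ factors, yielding the $(\Lmin\Lmed)^{1/2}$ prefactor. The resonance tolerance $\Tilde h = O(\Tilde H)$ imposes no effective restriction inside the localized boxes, so the frequency-space area is simply the product of the two smallest $\xi$-widths (the larger pair being coupled through $\xi_1+\xi_2+\xi_3+\xi_4=0$), giving the $(\Nmin\Nmed)^{1/2}$ factor.

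For \eqref{Count3-1} (the generic high-resonance case $\Tilde H \sim \Lmax$), after Lemma \ref{CountingEstLem} the three smaller modulations contribute $(\Lmin\Lmed\Lmean)^{1/2}$, and the counting reduces to bounding the measure of the set
\[
\bigl\{(\xi_a, \xi_b) \in \text{localization boxes} : \Tilde h(\xi) = \tau + O(\Lmax)\bigr\}.
\]
By Lemma \ref{Reso-Lemma} combined with $\Lmax \sim \Tilde H \gtrsim |\xi_1+\xi_2||\xi_2+\xi_3|\Nmax^{\alpha-2}$, a level set of $\Tilde h$ is a thin tube of width $\lesssim \Lmax / (|\xi_1+\xi_2||\xi_2+\xi_3|\Nmax^{\alpha-2})$ in one direction and of extent $\lesssim \min(\Nmin,\Nmed)$ in the other. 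In the configurations $N_1\sim N_3\sim\Nmax$ with $\Lmax\in\{L_1,L_3\}$, or $N_1\sim N_2\sim\Nmax\gg N_3$ with arbitrary $\Lmax$, optimizing over this geometry yields an area of order $\Nmin^{1/2}\Nmax^{3/2-\alpha}$, producing \eqref{Count3-1} after taking square roots.

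The delicate estimate is \eqref{N1N3max}, corresponding to $N_1\sim N_3\sim\Nmax$ together with $\Lmax = L_2$ or $L_4$, i.e.\ the consumed modulation sits on a \emph{small-frequency} variable ($\xi_2$ or $\xi_4$). Here both $|\xi_1+\xi_2|$ and $|\xi_2+\xi_3|$ can be of size $\Nmax$, so the crude tube count from the previous paragraph is too lossy and would only reproduce the weaker \eqref{Count3-1}. To obtain the improved $\Nmin^{1/6}\Nmed^{1/6}\Nmax^{2/3-\alpha/2}$, my plan is to split the integration region according to the relative sizes of the two Jacobian factors in \eqref{Resonance} and interpolate the resulting counts by a H\"older trichotomy with weights $(\tfrac16,\tfrac16,\tfrac23)$, the weight $\tfrac23$ attaching to the large-frequency direction where the Jacobian of $\Tilde h$ is of order $\Nmax^{\alpha-1}$. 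The main obstacle will be clean bookkeeping of the localizations $|\xi_j-\xi_j^0|\ll N_j$ through this interpolation, so that the final count simultaneously respects all four frequency boxes while producing the $\tfrac16$-exponents on $\Nmin$ and $\Nmed$.
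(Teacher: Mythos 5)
Your overall architecture (box localization via \eqref{local}, reduction to a level-set count through Lemma \ref{CountingEstLem}, derivative bounds on the resonance function, interpolation with the trivial count) is the same as the paper's, and your treatment of \eqref{HLL} matches the paper's Case~1. However, there is a genuine gap in your plan for the key estimate \eqref{N1N3max}, and a conceptual error feeding into it. First, Lemma \ref{Reso-Lemma} bounds the \emph{value} of $\Tilde h$ from below, not its gradient; the width of the level set $\{\Tilde h=\tau+O(\Lmax)\}$ in a direction $v$ is $\Lmax/|\partial_v\Tilde h|$, and the relevant directional derivatives (computed in the paper by mean-value factorizations such as $|\partial_{\xi_3}f|\sim|\xi-\xi_4|\int_0^1|\xi_3+\mu(\xi_4-\xi)|^{\alpha-2}\,d\mu$) carry a \emph{single} difference factor, not the product $|\xi_1+\xi_2||\xi_2+\xi_3|$. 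Your proposed tube width $\Lmax/(|\xi_1+\xi_2||\xi_2+\xi_3|\Nmax^{\alpha-2})$ is therefore not the right quantity, and you have also misdiagnosed the difficulty: when $N_1\sim N_3\sim\Nmax$ and $\Lmax\sim L_2$ or $L_4$, the problem is not that the two factors in \eqref{Resonance} are both large (that only makes $\Tilde H\gtrsim\Nmax^\alpha$, which helps), but that the derivative of the counting function in the large-frequency direction, $\partial_{\xi_3}\big(|\xi-\xi_3-\xi_4|^\alpha+|\xi_3|^\alpha\big)=\alpha\big(\xi_3|\xi_3|^{\alpha-2}-\xi_1|\xi_1|^{\alpha-2}\big)$, can \emph{vanish} when $\xi_1\approx\xi_3$, both of size $\Nmax$.

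The paper resolves this degeneracy by splitting the $\xi_3$-range according to $|\xi-\xi_4-2\xi_3|\lessgtr R$ and optimizing $R$, which yields the square-root-type count $\Lmean^{1/2}\Nmax^{(2-\alpha)/2}$ for $\xi_3$ and hence the endpoint bound $(\Lmin\Lmed)^{1/2}\Lmean^{3/4}\Nmax^{1-3\alpha/4}$ of \eqref{Count6Int1}; interpolating this with the universal bound $(\Lmin\Lmed\Nmin\Nmed)^{1/2}$ with weights $2/3$ and $1/3$ is what produces the exponents $\Nmin^{1/6}\Nmed^{1/6}\Nmax^{2/3-\alpha/2}$ and $\Lmean^{1/2}$ in \eqref{N1N3max}. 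Your ``H\"older trichotomy with weights $(\tfrac16,\tfrac16,\tfrac23)$'' reverse-engineers the final exponents but does not supply the second interpolation endpoint, which is the substance of the proof. A similar (smaller) omission occurs in your \eqref{Count3-1}: in the subcase $\Lmax\sim L_3$ with $N_1\sim N_2\sim\Nmax\gg N_3$ the direct count gives $\Lmean^2\Nmed^{-1}\Nmax^{3-2\alpha}$ for the area, which is not pointwise dominated by $\Lmean\Nmin^{1/2}\Nmax^{3/2-\alpha}$; the paper again needs to take a geometric mean with the trivial bound, a step that ``optimizing over this geometry'' does not capture.
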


\begin{proof}  We provide a proof of this result dividing into different cases.\\

\noi
{\bf Case 1}. First consider the case $\Tilde{H} \ll \Lmax \sim \Lmean$. In this case to show \eqref{HLL}, we use the Comparison principle and Lemma 3.6 in \cite{TT-01}  to obtain from \eqref{local} that
\begin{equation}
\| Y_{N_j, \Tilde{H}, L_j}\|_{[4;\R\times\R]} \lesssim \big\| \|\chi_{|\lambda_\text{min}| \sim \Lmin} \chi_{|\lambda_\text{med}| \sim \Lmed} \|_{[4; \R]} \chi_{|\xi_4 - \xi_4^0| \ll \Nmin} \chi_{|\xi_3 - \xi_3^0| \ll \Nmed|} \|_{[4; \R]}.
\end{equation}
By the following calculations
\[ \|\chi_{|\lambda_\text{min}| \sim \Lmin} \chi_{|\lambda_\text{med}| \sim \Lmed} \|_{[4; \R]} \lesssim \|\chi_{|\lambda_\text{min}| \sim \Lmin} \|_{[3; \R]}  \|\chi_{|\lambda_\text{med}| \sim \Lmed} \|_{[3; \R]}  \lesssim (\Lmin\Lmed)^\frac{1}{2}\]
and 
\[ \| \chi_{|\xi_4 - \xi_4^0| \ll \Nmin} \chi_{|\xi_3 - \xi_3^0| \ll \Nmed|} \|_{[4; \R]} \lesssim \| \chi_{|\xi_4 - \xi_4^0| \ll \Nmin} \|_{[3;\R]} \| \chi_{|\xi_3 - \xi_3^0| \ll \Nmed} \|_{[3;\R]} \lesssim (\Nmin \Nmed)^\frac12.\]
we observe that 
\begin{equation} \label{BaseEst}
\| Y_{N_j, \Tilde{H}, L_j}\|_{[4;\R\times\R]}  \lesssim (\Lmin \Lmed \Nmin \Nmed)^\frac12
\end{equation}
always holds, regardless of case, which demonstrates \eqref{HLL}.\\

\noi
{\bf Case 2}. If $N_1 \sim N_3 \sim \Nmax$ and $\Tilde{H} \sim \Lmax \sim L_2$ or $\Tilde{H} \sim \Lmax \sim L_4$, we prove \eqref{N1N3max} by considering each of the two subcases in turn. \\

\noi
{\it Case 2.1}. If $N_1 \sim N_3$ and $\Tilde{H} \sim \Lmax \sim L_2$, we use the Comparison principle (Lemma 3.1 in \cite{TT-01}) and Lemma \ref{CountingEstLem}, to obtain from \eqref{local} that
\begin{equation} \label{Count6Pf}
\begin{split}
\| Y_{N_j, \Tilde{H}, L_j}&\|_{[4;\R\times\R]} \\
&\lesssim (\Lmin\Lmed)^\frac12 |\{ \xi_4,\xi_3 \in \R : |\xi_4 - \xi_4^0| \ll \Nmin; |\xi_3 - \xi_3^0| \ll \Nmed; 
 \\
 &\hspace{4cm}|\xi - \xi_3 - \xi_4|^\alpha + |\xi_3|^\alpha - |\xi_4|^\alpha = \tau + O(\Lmean) \}|^\frac12,
\end{split}
\end{equation}
for some $\tau \in \R $ and $\xi$ such that $|\xi + \xi_2^0| \ll \Nmed$. 

Defining $f(\xi_3,\xi_4) = |\xi - \xi_3 - \xi_4|^\alpha + |\xi_3|^\alpha - |\xi_4|^\alpha$, when considering the interval over which $\xi_3$ ranges, we divide into two cases
\begin{equation}
    |\xi - \xi_4 - 2\xi_3| \leq R \hspace{1cm}\text{ and } \hspace{1cm}|\xi - \xi_4 - 2\xi_3| \geq R.
\end{equation}
For the former case, clearly for fixed $\xi_4$, $\xi_3$ ranges over an interval of length $\les R$. For the latter case
\begin{align*}
|\del_{\xi_3} f| &= \alpha |\xi_3|\xi_3|^{\alpha-2} + (\xi_3 + \xi_4 -\xi)|\xi_3+\xi_4- \xi|^{\alpha-2}| \\
& \sim |\xi - \xi_4 - 2\xi_3| \int_0^1 |\xi_3 + \mu(\xi - \xi_4 - 2\xi_3)|^{\alpha-2} d \mu \gtrsim R\Nmax^{\alpha-2}.
\end{align*} 
 Therefore, by the Mean Value Theorem $\xi_3$ ranges over an interval of length $\Lmean \Nmax^{2-\alpha}R^{-1}$. Hence, choosing $R = \Lmean \Nmax^{2-\alpha}R^{-1}$ gives that $\xi_3$ ranges over an interval of length $\Lmean^\frac12 \Nmax^{\frac{2-\alpha}{2}}$ in both cases.
Furthermore,
\begin{align*}
|\del_{\xi_4} f| &= \alpha |-\xi_4|\xi_4|^{\alpha-2} + (\xi_3 + \xi_4 -\xi)|\xi_3+\xi_4- \xi|^{\alpha-2}| \\
& \sim |\xi_3-\xi| \int_0^1 |\xi_4 + \mu(\xi_3 -\xi)|^{\alpha-2} d \mu \gtrsim \Nmax^{\alpha-1}.
\end{align*} 
Above can be observed from the reduction we did in \eqref{local}, as $|\xi_3 - \xi| = |\xi_3 + \xi_2^0 - (\xi + \xi_2^0)|$ of which $|\xi_3 + \xi_2^0| \sim \Nmax$ and $|\xi + \xi_2^0| \ll \Nmed$. Hence, again by Mean Value Theorem, $\xi_4$ ranges over an interval of length $\Lmean \Nmax^{1-\alpha}$. Combining all this information, \eqref{Count6Pf} yields
\begin{equation} \label{Count6Int1}
\| Y_{N_j, \Tilde{H}, L_j}\|_{[4;\R^2]} \lesssim (\Lmin\Lmed)^\frac12 \Lmean^\frac{3}{4}\Nmax^{1-\frac{3}{4}\alpha}.
\end{equation}
Interpolating between \eqref{Count6Int1} and $\eqref{BaseEst}$ then yields \eqref{N1N3max}.\\

\noi
{\it Case 2.2}. If $N_1 \sim N_3 \sim \Nmax$ and $\Tilde{H} \sim \Lmax \sim L_4$, we can obtain the required estimate again with a similar argument used in {\it Case 2.1} by considering the measure of the set
\begin{equation}
\{\xi_2,\xi_3 \in \R : |\xi_2 - \xi_2^0| \ll \Nmed, |\xi_3 - \xi_3^0| \ll \Nmed; |\xi - \xi_2 -\xi_3|^\alpha - |\xi_2|^\alpha + |\xi_3|^\alpha  = \tau + O(\Lmean)\}
\end{equation}
for some $\tau \in \R$ and $\xi \in \R$ such that $|\xi + \xi_4^0| \ll \Nmin$. \\


\noi
{\bf Case 3}. For all remaining cases such that $\Tilde{H} \sim \Lmax$, we show \eqref{Count3-1} by subdividing into several subcases.\\

\noi 
{\it Case 3.1}. If $\Tilde{H} \sim \Lmax \sim L_1$ and $N_1 \sim N_2 \sim \Nmax \gg \Nmed \sim N_3$ we use the Comparison principle and Lemma \ref{CountingEstLem} to obtain from \eqref{local} that
\begin{equation} \label{Count1Pf}
\begin{split}
\| Y_{N_j, \Tilde{H}, L_j}&\|_{[4;\R\times\R]} \\
&\lesssim (\Lmin\Lmed)^\frac12 |\{ \xi_4,\xi_3 \in \R : |\xi_4 - \xi_4^0| \ll \Nmin; |\xi_3 - \xi_3^0| \ll \Nmed; 
 \\
 &\hspace{4cm}|\xi - \xi_3 - \xi_4|^\alpha - |\xi_3|^\alpha + |\xi_4|^\alpha = \tau + O(\Lmean) \}|^\frac12,
\end{split}
\end{equation}
for some $\tau \in \R$ and $\xi$ such that $|\xi + \xi_1^0| \ll \Nmed$.

Defining $f(\xi_3,\xi_4) = |\xi - \xi_3 -\xi_4 |^\alpha - |\xi_3|^\alpha + |\xi_4|^\alpha$, we have
\begin{align*}
|\del_{\xi_3} f | &= |\alpha((\xi_3 + \xi_4 -\xi)|\xi - \xi_3 - \xi_4|^{\alpha-2} - \xi_3|\xi_3|^{\alpha-2})| \\
& \sim |\xi - \xi_4| \int_0^1 |\xi_3+ \mu(\xi_4 - \xi)|^{\alpha-2} d \mu \gtrsim \Nmax^{\alpha-1},
\end{align*}
for $|\xi_4| \sim \Nmin \ll \Nmax \sim |\xi_1^0|$. Hence, by the Mean Value Theorem,
$\xi_3$ ranges over an interval of length $\Lmean \Nmax^{1-\alpha}$. Clearly, $\xi_4$ ranges over a length of $\Nmin$, hence overall, the RHS of \eqref{Count1Pf} can be bounded by $(\Lmin\Lmed\Lmean)^\frac12 \Nmin^\frac12 \Nmax^{\frac{1-\alpha}{2}}$, which is sufficient for \eqref{Count3-1}.\\

\noi
{\it Case 3.2}.  If $\Tilde{H} \sim \Lmax \sim L_2$ and $N_1 \sim N_2 \sim \Nmax\gg \Nmed \sim N_3$, one can obtain \eqref{Count3-1} similarly as in {\it Case 3.1} by considering the measure of the set
\begin{equation}
\{\xi_3,\xi_4 \in \R: |\xi_3 - \xi_3^0| \ll \Nmed, |\xi_4 - \xi_4^0| \ll \Nmin; |\xi - \xi_3 - \xi_4|^\alpha + |\xi_3|^\alpha - |\xi_4|^\alpha = \tau + 
O(\Lmean)\},
\end{equation}
for some $\tau \in \R$ and $\xi \in \R$ such that $|\xi + \xi_2^0|\ll \Nmed$. \\
  

\noi
{\it Case 3.3}.  If $\Tilde{H} \sim \Lmax \sim L_3$ and $N_1 \sim N_2 \sim \Nmax\gg \Nmed \sim N_3$, similarly to previous cases, using Comparison principle and Lemma \ref{CountingEstLem},  one obtains from \eqref{local} that
\begin{equation} \label{Count3Pf}
\begin{split}
\| Y_{N_j, \Tilde{H}, L_j}&\|_{[4;\R\times\R]} \\
&\lesssim (\Lmin\Lmed)^\frac12 |\{ \xi_4,\xi_2 \in \R : |\xi_4 - \xi_4^0| \ll \Nmin; |\xi_2 - \xi_2^0| \ll \Nmed; 
 \\
 &\hspace{4cm}|\xi - \xi_2 - \xi_4|^\alpha - |\xi_2|^\alpha 
 - |\xi_4|^\alpha = \tau + O(\Lmean) \}|^\frac12,
\end{split}
\end{equation}
for some $\tau \in \R$ and $\xi$ where $|\xi + \xi_3^0| \ll \Nmed$. 

Defining $f(\xi_2,\xi_4) = |\xi - \xi_2 - \xi_4|^\alpha - |\xi_2|^\alpha - |\xi_4|^\alpha$, one gets
\begin{align*}
|\del_{\xi_2} f| & = \alpha |(\xi_2 + \xi_4 -\xi)|\xi_2 + \xi_4 -\xi|^{\alpha-2} - \xi_2|\xi_2|^{\alpha-2}| \\
& \sim |\xi_4- \xi|\int_0^1 |\xi_2 + \mu(\xi_4- \xi)|^{\alpha-2} d \mu \gtrsim \Nmed\Nmax^{\alpha-2}.
\end{align*}
Thus $\xi_2$ ranges over an interval of length $\Lmean \Nmed^{-1} \Nmax^{2-\alpha}$. Furthermore,
\begin{align*}
|\del_{\xi_4} f| & = \alpha |(\xi_2 + \xi_4 -\xi)|\xi_2 + \xi_4 -\xi|^{\alpha-2} - \xi_4|\xi_4|^{\alpha-2}| \\
& \sim |\xi_2 - \xi|\int_0^1 |\xi_4 + \mu(\xi_2- \xi)|^{\alpha-2} d \mu \gtrsim \Nmax^{\alpha-1},
\end{align*}
since $|\xi_2| \sim \Nmax \gg \Nmed \sim |\xi_3^0|$, hence $\xi_4$ ranges over an interval of length $\Lmean\Nmax^{1-\alpha}$. 

Combining all information, \eqref{Count3Pf} yields the estimate
\begin{equation} \label{Count3Int1}
\| Y_{N_j, \Tilde{H}, L_j}\|_{[4;\R\times\R]} \lesssim (\Lmin\Lmed)^\frac12 \Lmean \Nmed^{-\frac{1}{2}}\Nmax^{\frac{3}{2}-\alpha}.
\end{equation}
Finally, interpolating between \eqref{Count3Int1} and \eqref{BaseEst}, yields the required \eqref{Count3-1}.\\

\noi
{\it Case 3.4}. If $\Tilde{H} \sim \Lmax \sim L_4$ and $N_1 \sim N_2 \sim \Nmax\gg \Nmed \sim N_3$, one can obtain \eqref{Count3-1} similarly as in {\it Case 3.3} 
by considering the measure of the set
\begin{equation}
\{\xi_1,\xi_3 \in \R: |\xi_1 - \xi_1^0| \ll \Nmed, |\xi_3 - \xi_3^0| \ll \Nmed; |\xi_1|^\alpha - |\xi - \xi_1 - \xi_3|^\alpha + |\xi_3|^\alpha = \tau + O(\Lmean)\},
\end{equation}
for some $\tau \in \R$ and $\xi \in \R$ such that $|\xi + \xi_4^0| \ll \Nmin$. \\


\noi
{\it Case 3.5}. If $\Tilde{H} \sim \Lmax \sim L_1$ and $N_1 \sim N_3 \sim \Nmax$, one can obtain \eqref{Count3-1} similarly as in {\it Case 3.1} by considering the measure of the set
\begin{equation}
\{\xi_2,\xi_4 \in\R: |\xi_2 - \xi_2^0| \ll \Nmed, |\xi_4 - \xi_4^0| \ll \Nmin; |\xi_2|^\alpha - |\xi - \xi_2 - \xi_4|^\alpha + |\xi_4|^\alpha = \tau + O(\Lmean) \},
\end{equation}
for some $\tau \in \R$ and $\xi \in \R$ such that $|\xi + \xi_1^0| \ll \Nmed$. \\


\noi{\it Case 3.6}. If $\Tilde{H} \sim \Lmax \sim L_3$ and $N_1 \sim N_3 \sim \Nmax$, one can obtain \eqref{Count3-1} similarly as in {\it Case 3.1} by considering the measure of the set
\begin{equation}
\{ \xi_2, \xi_4 \in \R: |\xi_2 - \xi_2^0| \ll \Nmed, |\xi_4 - \xi_4^0| \ll \Nmin; |\xi - \xi_2 - \xi_4|^\alpha - |\xi_2|^\alpha - |\xi_4|^\alpha = \tau +O(\Lmean) \},
\end{equation}
for some $\tau \in \R$ and $\xi \in \R$ such that $|\xi + \xi_3^0| \ll \Nmed$.
\end{proof}

Now we are ready to state the main lemma of this section.

\begin{lemma} \label{claim1}
Let $m$ be defined as in \eqref{tln-e7}, then for $\beta \in (-\frac14, 0)$ and $0 > s > \beta + \frac{5}{8} - \frac{\alpha}{2}$ we have
\begin{equation} \label{tln-e9}
\|m(\xi_1,\tau_1,\dots, \xi_4,\tau_4) \chi_{|\xi_4| = |\xi_\text{min}| \ll |\xi_\text{med}| \lesssim |\xi_\text{mean}|\sim |\xi_\text{max}|} \chi_{|\xi_\text{med}| \geq 1} \|_{[4;\R\times\R]} \lesssim 1.
\end{equation} 
\end{lemma}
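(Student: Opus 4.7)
The plan is to reduce the bound via dyadic decomposition to the trilinear multiplier estimates of Proposition~\ref{TrilinearCountingEst}. Since $\beta\in(-\tfrac14,0)$, I first apply Lemma~\ref{SmoothingEst} to replace each singular factor $|\xi_j|^{\beta}$ by $\langle\xi_j\rangle^{\beta}$, reducing the problem to estimating $\|\widetilde{m}\|_{[4;\R\times\R]}$ with $\widetilde{m}$ as in \eqref{Tri-1}. I then decompose dyadically $|\xi_j|\sim N_j$, $|\lambda_j|\sim L_j$, $|\widetilde h(\xi)|\sim \widetilde H$ and aim to control the sum of the resulting localized multipliers $Y_{N_j;\widetilde H;L_j}$. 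The frequency localization imposed in \eqref{tln-e9} together with $\sum_j\xi_j=0$ forces $N_4=\Nmin$ and $\Nmax\sim\Nmean$, and by symmetry between $\xi_1$ and $\xi_3$ I may assume $N_1\sim\Nmax=:N$. Two subcases remain:
\[
\text{(A)}\ N_1\sim N_3\sim N,\ N_2\sim\Nmed;\qquad \text{(B)}\ N_1\sim N_2\sim N,\ N_3\sim\Nmed.
\]

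For each subcase I split into the low-modulation regime $\widetilde H\ll\Lmax\sim\Lmean$ (using \eqref{HLL}) and the high-modulation regime $\widetilde H\sim\Lmax$ (using \eqref{N1N3max} or \eqref{Count3-1} depending on which $L_j$ attains the maximum and which subcase we are in). In the high-modulation regime the resonance identity of Lemma~\ref{Reso-Lemma}, $\widetilde H\gtrsim|\xi_1+\xi_2|\,|\xi_2+\xi_3|\,N^{\alpha-2}$, yields $\widetilde H\gtrsim N^{\alpha}$ in Case~(A) (where $|\xi_1+\xi_2|\sim|\xi_2+\xi_3|\sim N$) and $\widetilde H\gtrsim \Nmed\,N^{\alpha-1}$ in Case~(B). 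The dyadic summand then reads
\[
\frac{\langle N_4\rangle^{s+\beta}\,N^{2(\beta-s)}\,\langle\Nmed\rangle^{\beta-s}\,\|Y_{N_j;\widetilde H;L_j}\|_{[4;\R\times\R]}}{L_1^{\tfrac12+\eps}L_2^{\tfrac12+\eps}L_3^{\tfrac12+\eps}L_4^{\tfrac12-2\eps}},
\]
and I sum first in the $L_j$ (the $(\Lmin\Lmed\Lmean)^{1/2}$ factor from Proposition~\ref{TrilinearCountingEst} cancels three of the $L_j^{-1/2}$ weights modulo $O(\eps)$ losses, leaving $\Lmax^{-1/2+O(\eps)}$), then in $\widetilde H\sim\Lmax$ down to its resonance lower bound, and finally over $\Nmin\lesssim\Nmed\lesssim N$.

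The controlling contribution comes from Case~(B) with the bound \eqref{Count3-1}: inserting $\widetilde H\sim\Nmed N^{\alpha-1}$, the summand takes the form
\[
\Nmin^{s+\beta+\tfrac14}\,\Nmed^{\beta-s-\tfrac12+O(\eps)}\,N^{2\beta-2s+\tfrac54-\alpha+O(\eps)}.
\]
The critical exponent is the one on $N$, and strict negativity is equivalent to the hypothesis $s>\beta+\tfrac58-\tfrac{\alpha}{2}$; the $\Nmin$- and $\Nmed$-sums are controlled for $\beta\in(-\tfrac14,0)$ by separating $\Nmin\le 1$ from $\Nmin\ge 1$ (and similarly for $\Nmed$), with all resulting $N$-exponents bounded above by $2\beta-2s+\tfrac54-\alpha+O(\eps)$. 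All other branches — Case~(A), Case~(B) with $\widetilde H$ strictly above the resonance floor, or the low-modulation regime — yield strictly smaller $N$-exponents and are absorbed.

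The main obstacle is precisely the most symmetric, least-resonant Case~(B): because $\xi_1+\xi_2+\xi_4=-\xi_3$ has intermediate size $\Nmed$, the resonance gain is only $\Nmed N^{\alpha-1}$ rather than $N^{\alpha}$, and the reduction to a bilinear estimate used for $s\ge 0$ in Section~\ref{sec-3} is no longer available. It is here that the genuinely trilinear bound \eqref{Count3-1}, proved via the counting Lemma~\ref{CountingEstLem}, is essential and is responsible for the threshold in the statement.
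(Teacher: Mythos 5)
Your proposal is correct and follows essentially the same route as the paper's proof: smoothing via Lemma \ref{SmoothingEst}, dyadic decomposition plus Schur's test, the resonance lower bounds $\widetilde H \gtrsim N^{\alpha}$ (resp.\ $\Nmed N^{\alpha-1}$), and the three estimates of Proposition \ref{TrilinearCountingEst}, with the same worst-case exponent $N^{2\beta-2s+\frac54-\alpha+O(\eps)}$ producing the threshold $s>\beta+\frac58-\frac\alpha2$. The only difference is organizational (you group by frequency configuration rather than by which counting estimate applies) and that you assert rather than verify that the non-dominant branches are absorbed, which does check out.
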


\begin{proof}[Proof of Lemma \ref{claim1}]
By an averaging argument, we may restrict to $|\lambda_j| \geq 1$. By a dyadic decomposition and Schur's test (Lemma 3.11 in \cite{TT-01}), for $\Nmax \sim N$,  $\|m(\xi_1,\tau_1,\dots, \xi_4,\tau_4) \chi_{|\xi_4| = |\xi_\text{min}| \ll |\xi_\text{med}| \lesssim |\xi_\text{mean}|\sim |\xi_\text{max}|} \chi_{|\xi_\text{med}| \geq 1} \|_{[4;\R\times\R]}$ is bounded by
\begin{equation} \label{TrilinearToBound}
\begin{split}
 \sup_N \sum_{\substack{\Nmin \ll \Nmed \leq N \\ \Nmed \geq 1}} \sum_{L_j} \sum_{\Tilde{H}} \frac{\abrac{N}^{2\beta -2s} \abrac{\Nmed}^{\beta - s} \abrac{\Nmin}^{\beta + s}}{L_1^{\frac12 + \varepsilon}L_2^{\frac12 + \varepsilon}L_3^{\frac12 + \varepsilon}L_4^{\frac12 - 2\varepsilon}} \| Y_{N_j, \Tilde{H}, L_j}\|_{[4;\R^2]}.
\end{split}
\end{equation}
Note that by the resonance relation \eqref{Resonance} one has 
\begin{equation}\label{ResonanceGeneral}
    \Tilde{H} \gtrsim N^{\alpha-1}\Nmed.
\end{equation}
\noi
{\bf Case 1}.
Consider first the case for which $\Tilde{H} \ll \Lmax \sim \Lmean$. By \eqref{HLL}, we have
\begin{align}
\eqref{TrilinearToBound} & \lesssim  \sum_{\substack{\Nmin \ll \Nmed \leq N \\ \Nmed \geq 1}} \sum_{1 \leq \Lmin\leq \Lmed \leq \Lmax} \frac{\abrac{N}^{2\beta -2s} \abrac{\Nmed}^{\beta - s} \abrac{\Nmin}^{\beta + s}}{\Lmin^{\frac12 + \varepsilon} \Lmed^{\frac12 + \varepsilon} \Lmax^{1 - \varepsilon}} (\Lmin\Lmed\Nmin\Nmed)^\frac{1}{2} \nonumber\\
& \lesssim \sum_{\substack{\Nmin \ll \Nmed \leq N \\ \Nmed \geq 1}} \abrac{N}^{2\beta -2s} \abrac{\Nmed}^{\frac12 + \beta - s} \abrac{\Nmin}^{\beta + s}\Nmin^\frac12  N^{(1-\alpha) + \varepsilon(\alpha-1)} \Nmed^{-1 + \varepsilon} \nonumber\\
& \lesssim \sum_{\substack{\Nmin \ll \Nmed \leq N \\ \Nmed \geq 1}} \abrac{N}^{2\beta -2s + (1- \alpha) +\varepsilon(\alpha-1)} \abrac{\Nmed}^{\beta - \frac12 - s + \varepsilon} \abrac{\Nmin}^{\beta + s}\Nmin^\frac12.   \label{Bound1LHS}
\end{align}

For $\Nmin \leq 1$, we can bound this as
\begin{equation} \label{TriCompute1}
\eqref{Bound1LHS} \lesssim \sum_{1 \leq  \Nmed \leq N } \abrac{N}^{2\beta -2s + (1- \alpha) +\varepsilon(\alpha-1)} \abrac{\Nmed}^{\beta - \frac12 - s + \varepsilon} \leq N^{2\beta - 2s + (1-\alpha) + \varepsilon(\alpha-1)}
\end{equation}
since $\beta - \frac{1}{2} - s < 0$ for $s,\beta$ in the range considered. Observe that \eqref{TriCompute1} is bounded for $s > \beta + \frac{1-\alpha}{2}$. 

For $\Nmin \geq 1$, if $\beta + s + \frac12 < 0$, we can follow  the computation as above. Otherwise 
\begin{equation}
\eqref{Bound1LHS} \lesssim \sum_{1 \leq  \Nmed \leq N } \abrac{N}^{2\beta -2s + (1- \alpha) +\varepsilon(\alpha-1)} \abrac{\Nmed}^{2\beta + \varepsilon} \leq N^{2\beta - 2s + (1-\alpha) + \varepsilon(\alpha-1)},
\end{equation}
giving us the same bound as before.\\

\noi
{\bf Case 2}. We consider the second case for which $N_1 \sim N_3 \sim N$ and $\Tilde{H} \sim \Lmax \sim L_2$ or $\Tilde{H} \sim \Lmax \sim L_4$. Observe that due to the restriction $N_1 \sim N_3 \sim N$, by \eqref{Resonance} we have an improvement on \eqref{ResonanceGeneral} with $\Tilde{H} \gtrsim N^\alpha$. By \eqref{N1N3max}, we have the following bound on \eqref{TrilinearToBound}.
\begin{align}
& \sum_{\substack{\Nmin \ll \Nmed \leq N \\ \Nmed \geq 1}} \sum_{L_j: j \in \{1,2,3,4\}} \frac{\abrac{N}^{2\beta -2s} \abrac{\Nmed}^{\beta - s} \abrac{\Nmin}^{\beta + s}}{\Lmin^{\frac12 + \varepsilon}\Lmed^{\frac12 + \varepsilon} \Lmean^{\frac12 + \varepsilon} \Lmax^{\frac12 - 2\varepsilon}} (\Lmin\Lmed\Lmean)^\frac12 \Nmin^\frac16 \Nmed^\frac16 N^{\frac{2}{3}-\frac{\alpha}{2}} \\
& \les \sum_{\substack{\Nmin \ll \Nmed \leq N \\ \Nmed \geq 1}} \abrac{N}^{2\beta-2s +\frac{2}{3} - \frac{\alpha}{2}}\abrac{\Nmed}^{\beta-s}\abrac{\Nmin}^{\beta+s}\Nmin^\frac16 \Nmed^\frac16 N^{-\frac{\alpha}{2} +2\alpha\varepsilon} \\
& = \sum_{\substack{\Nmin \ll \Nmed \leq N \\ \Nmed \geq 1}} \abrac{N}^{2\beta-2s +\frac{2}{3} - \alpha + 2\alpha\varepsilon}\abrac{\Nmed}^{\beta-s + \frac16}\abrac{\Nmin}^{\beta+s}\Nmin^\frac16. \label{Bound2LHS}
\end{align}

For $\Nmin \leq 1$, we get
\begin{equation}
\eqref{Bound2LHS} \lesssim \sum_{1 \leq \Nmed \leq N} \abrac{N}^{2\beta - 2s + \frac23 - \alpha + 2\alpha\varepsilon} \abrac{\Nmed}^{\beta-s+\frac{1}{6}}.\label{Bound2RHS}
\end{equation}
If $\beta -s + \frac16 < 0$ then we get a uniform bound for $s > \beta +\frac{1}{3} - \frac{\alpha}{2}$. Otherwise 
\begin{equation}
    \eqref{Bound2RHS} \les N^{3\beta - 3s + \frac{5}{6} - \alpha + 2\alpha\varepsilon}
\end{equation}
which is uniformly bounded for $s > \beta + \frac{5}{18} - \frac{\alpha}{3}$. Note that this is implied by $s > \beta + \frac{5}{8} - \frac{\alpha}{2}$ in the ranges we consider. 

For $\Nmin \geq 1$, if $\beta + s + \frac{1}{6} < 0$, we reduce to the estimates above. Otherwise, we bound by
\begin{equation}
\eqref{Bound2LHS} \les \sum_{1 \leq \Nmed \leq N} \abrac{N}^{2\beta - 2s + \frac{2}{3} - \alpha + 2\alpha\varepsilon} \abrac{\Nmed}^{2\beta + \frac{1}{3}}.
\end{equation}
If $2\beta + \frac13 < 0$ then we have a uniform bound for $s > \beta +\frac{1}{3} - \frac{\alpha}{2}$. Otherwise, we have a uniform bound for $s > 2\beta + \frac{1-\alpha}{2}$, which is again sufficient, completing the case. \\

\noi
{\bf Case 3}. We now consider the remaining cases for $\Lmax \sim \Tilde{H}$. Taking into consideration the estimate \eqref{Count3-1} from Proposition \ref{TrilinearCountingEst}, it is sufficient to estimate the following
\begin{align}
&\sum_{\substack{\Nmin \ll \Nmed \leq N \\ \Nmed \geq 1}} \sum_{L_j: j \in\{1,2,3,4\}} \frac{\abrac{N}^{2\beta -2s} \abrac{\Nmed}^{\beta - s} \abrac{\Nmin}^{\beta + s}}{\Lmin^{\frac12 + \varepsilon}\Lmed^{\frac12 + \varepsilon} \Lmean^{\frac12 + \varepsilon} \Lmax^{\frac12 - 2\varepsilon}} (\Lmin\Lmed\Lmean)^\frac12 \Nmin^\frac14 N^{\frac{3}{4} - \frac{\alpha}{2}} \nonumber \\
& \lesssim \sum_{\substack{\Nmin \ll \Nmed \leq N \\ \Nmed \geq 1}} \abrac{N}^{2\beta -2s + \frac34 - \frac{\alpha}{2}} \abrac{\Nmed}^{\beta - s} \abrac{\Nmin}^{\beta + s}\Nmin^\frac{1}{4} N^{\frac{1-\alpha}{2} +2(\alpha-1)\varepsilon}\Nmed^{-\frac12 + 2\varepsilon} \nonumber \\
& \lesssim \sum_{\substack{\Nmin \ll \Nmed \leq N \\ \Nmed \geq 1}}\abrac{N}^{2\beta -2s + \frac54 - \alpha + 2(\alpha-1)\varepsilon} \abrac{\Nmed}^{\beta - s - \frac{1}{2} + 2\varepsilon} \abrac{\Nmin}^{\beta + s}\Nmin^\frac{1}{4} .\label{Bound3LHS}
\end{align}

For $\Nmin \leq 1$, we get
\begin{equation}
\eqref{Bound3LHS} \lesssim \sum_{1 \leq \Nmed \leq N} \abrac{N}^{2\beta -2s + \frac54 - \alpha + 2(\alpha-1)\varepsilon} \abrac{\Nmed}^{\beta - s - \frac{1}{2} + 2\varepsilon} \lesssim N^{2\beta - 2s + \frac54 - \alpha + 2(\alpha-1)\varepsilon},
\end{equation}
again where $\beta - s - \frac{1}{2} < 0$, giving a uniform bound for $s > \beta + \frac{5}{8} - \frac{\alpha}{2}$.

For $\Nmin \geq 1$, again if $\beta + s + \frac{1}{4} < 0$, we reduce to the estimates above. Otherwise, we can bound by
\begin{equation}
\eqref{Bound3LHS} \lesssim 
 \sum_{1 \leq \Nmed \leq N} \abrac{N}^{2\beta -2s + \frac54 - \alpha + 2(\alpha-1)\varepsilon} \abrac{\Nmed}^{2\beta - \frac{1}{4} + 2\varepsilon} \lesssim N^{2\beta - 2s + \frac54 - \alpha + 2(\alpha-1)\varepsilon},
\end{equation}
leaving the same requirement on $s,\beta, \alpha$ and finishing the proof.
\end{proof}

Now, we are in a position to provide a proof of the trilinear estimate for negative regularities.

\begin{proof}[Proof of Proposition \ref{prop-1} for $s < 0$]
Returning to the context of the proof of Proposition \ref{prop-1}, in the case where $s<0$ we cannot use the original argument to cancel the $\abrac{\xi_4}$ term. Instead we can perform a ``half-cancellation''. We may assume that $|\xi_4| \ll |\xi_1|, |\xi_2|, |\xi_3|$ as otherwise we can perform a full cancellation as in \eqref{mult-1}. Since $\xi_1 + \xi_2 + \xi_3 + \xi_4 = 0$, we have three cases to consider $|\xi_2| \lesssim |\xi_1| \sim |\xi_3|$, $|\xi_1| \lesssim |\xi_3| \sim |\xi_2|$ and $|\xi_3| \lesssim |\xi_2| \sim |\xi_1|$. In the first case we can observe
\begin{equation}
\frac{\abrac{\xi_1}^{-s}\abrac{\xi_2}^{-s}\abrac{\xi_3}^{-s}}{\abrac{\xi_4}^{-s}} \lesssim \abrac{\xi_1}^{-\frac32s} \abrac{\xi_3}^{-\frac32s}.
\end{equation}
Arguing again, by the same bilinear estimate reduction, we can recover the trilinear estimate for negative regularity, but now for $\frac32s \geq 2\beta + \frac{2-\alpha}{4}$ instead. The cases $|\xi_1| \lesssim |\xi_3| \sim |\xi_2|$ and $|\xi_3| \lesssim |\xi_2| \sim |\xi_1|$ follow in a completely analogous fashion.

However, for $\beta \in (\frac78-\alpha,0)$, 
we can improve on this by using Lemma \ref{claim1}.
To see this, we may assume $|\xi_4| \ll |\xi_1|,|\xi_2|,|\xi_3|$ and $|\xi_i| \sim |\xi_k| \sim \max_{j=1,2,3} |\xi_j|$ for $i \neq k$, $i,k \in\{1,2,3\}$. If we order indices in terms of size $|\xi_{\text{max}}| \geq |\xi_{\text{mean}}| \geq |\xi_{\text{med}}| \geq |\xi_{\text{min}}|$, then we can reduce to the bilinear estimate as before in all cases other than $|\xi_4| = |\xi_{\text{min}}| \ll |\xi_{\text{med}}| \lesssim |\xi_{\text{mean}}| \sim |\xi_{\text{max}}|$. Note further, if $|\xi_\text{med}| \leq 1$ then $\abrac{\xi_\text{min}}^s \sim \abrac{\xi_\text{med}}^s \sim 1$, in which case we can again reduce to the bilinear estimate.
We may therefore also assume that $|\xi_\text{med}| > 1$ in the following. 
Therefore, it only remains to prove the multiplier estimate \eqref{tln-e8} under the conditions
\[
\{|\xi_4| = |\xi_\text{min}| \ll |\xi_\text{med}| \lesssim |\xi_\text{mean}|\sim |\xi_\text{max}| \} \cap \{|\xi_\text{med}| \geq 1 \} ,
\]
which follows from Lemma \ref{claim1}. This completes the proof.
\end{proof}

\section{Ill-posedness and optimal parameter thresholds}\label{sec-5}

In this section, we prove the ill-posedness result stated in Theorem~\ref{Th-ill-Line}.
Recall that the theorem asserts that, outside the range
\[
 s \;\ge\; 2\beta + \frac{2-\alpha}{4}, \qquad -\frac14 \;\le\; \beta \;\le\; \frac{\alpha-1}{2}, 
\]
there cannot exist a local solution theory for the IVP~\eqref{MMTEquation} whose
data-to-solution map is $\mathcal C^3$ at the origin in $H^s(\R)$.

More precisely, we will show that if $s,\alpha,\beta$ violate any of the three conditions
\[
 s \;\ge\; 2\beta + \frac{2-\alpha}{4}, \qquad \beta \;\ge\; -\frac14, \qquad \beta \;\le\; \frac{\alpha-1}{2},
\]
then the flow map
\[
 \Phi_t : H^s(\R)\to H^s(\R),\qquad \Phi_t(u_0)=u(t),
\]
cannot be $\mathcal C^3$ at the origin. Here $u$ is a (hypothetical) solution to the IVP~\eqref{MMTEquation}
with $u(0)=u_0$. As usual, the notion of $\mathcal C^3$ is that of Fr\'echet differentiability.
In particular, this rules out the possibility of constructing a local solution by a standard
contraction mapping argument in $H^s$; such a scheme would necessarily produce a smooth
data-to-solution map.

\subsection{Strategy of the proof}

Let $u_0\in H^s(\R)$ and $\delta>0$ be a small parameter. We consider the IVP
\begin{equation*}
 \begin{cases}
  i\partial_t u + (-\partial_x^2)^{\alpha/2} u
    = D_x^\beta \bigl(|D_x^\beta u|^2 D_x^\beta u\bigr),\qquad x,t\in\R,\\[1mm]
  u(x,0) = \delta\,u_0(x),
 \end{cases}
\end{equation*}
and denote the corresponding solution (when it exists) by $u=u^\delta(t)$.
Formally, we may write $u$ using Duhamel's formula as
\begin{equation}\label{C3-Duhamel}
 u^\delta(t)
   = \delta\,S(t)u_0
     - i \int_0^t S(t-t_1)\, D_x^\beta\bigl(|D_x^\beta u^\delta(t_1)|^2 D_x^\beta u^\delta(t_1)\bigr)\,dt_1,
\end{equation}
where $S(t)$ is the linear propagator
\[
 S(t)u_0 := \bigl(e^{it|\xi|^\alpha}\widehat{u_0}(\xi)\bigr)^\vee.
\]

Assume, for contradiction, that for some $s\in\R$ and $T>0$ small, the IVP~\eqref{MMTEquation}
is locally well-posed in $H^s(\R)$ and the solution map
\[
 \Phi_t : H^s(\R)\to H^s(\R),\qquad \Phi_t(u_0)=u(t),
\]
is $\mathcal C^3$ at the origin for all $t\in[0,T]$. In particular, the mapping
\[
 u_0 \mapsto u^\delta(t)
\]
is $\mathcal C^3$ in $\delta$ near $\delta=0$ with values in $H^s(\R)$.

Differentiating~\eqref{C3-Duhamel} in $\delta$ at $\delta=0$ (or, equivalently,
expanding $u$ in powers of $\delta$ and using the cubic nature of the nonlinearity)
shows that the third Fr\'echet derivative $d_0^3\Phi_t$ at the origin is given by
\begin{equation}\label{C3-d3Phi}
 d_0^3\Phi_t(u_0,u_0,u_0)
  = -i \int_0^t S(t-t_1)\,
      D_x^\beta\bigl(
        S(t_1)D_x^\beta u_0 \cdot \overline{S(t_1)D_x^\beta u_0} \cdot S(t_1)D_x^\beta u_0
      \bigr)\,dt_1.
\end{equation}
Since the nonlinearity is trilinear in $D_x^{\beta}u$, the first and second derivatives vanish at
the origin and $d^3_0\Phi_t$ is the first non-trivial derivative.

If $\Phi_t$ is $\mathcal C^3$ at $0$, there exists a constant $C_t>0$ such that
\begin{equation}\label{C3-bound}
 \bigl\|d_0^3\Phi_t(u_0,u_0,u_0)\bigr\|_{H^s}
   \le C_t \,\|u_0\|_{H^s}^3
 \qquad \text{for all } u_0\in H^s(\R).
\end{equation}
Our aim is to construct carefully chosen high- or low-frequency initial data $u_0$ for which
this estimate fails, depending on the parameters $(s,\alpha,\beta)$. We will do this by working
in Fourier space and localising $u_0$ to suitable frequency boxes. The analysis splits into
three different frequency configurations, each of which produces a different constraint on $s$
and $\beta$.

\subsection{Fourier representation}

Let us compute the Fourier transform in $x$ of the quantity in~\eqref{C3-d3Phi}. Denote
\[
 \omega(\xi) := |\xi|^\alpha,
\]
and recall 
\[
 \widehat{D_x^\beta f}(\xi) = |\xi|^\beta \widehat{f}(\xi) \quad\mathrm{and}\quad \widehat{S(t)f}(\xi) = e^{it\omega(\xi)}\widehat{f}(\xi).
\]
Using this and applying the Fourier transform to~\eqref{C3-d3Phi}, we obtain
\begin{equation}\label{Est-F1}
\begin{split}
 & \widehat{d_0^3  \Phi_t(u_0,u_0,u_0)}(\xi) \\
 & = i\,|\xi|^\beta e^{it\omega(\xi)}
    \int_{\xi_1-\xi_2+\xi_3=\xi} 
      \frac{e^{it \Omega(\vec{\xi})}-1}{\Omega(\vec{\xi})}\,
      |\xi_1|^\beta \widehat{u_0}(\xi_1)\,
      |\xi_2|^\beta \overline{\widehat{u_0}(\xi_2)}\,
      |\xi_3|^\beta \widehat{u_0}(\xi_3)\,d\xi_1\,d\xi_2,
\end{split}
\end{equation}
where we use the shorthand
\[
 \vec{\xi} = (\xi,\xi_1,\xi_2,\xi_3),\qquad
 \Omega(\vec{\xi}) := \omega(\xi) - \omega(\xi_1) + \omega(\xi_2) - \omega(\xi_3).
\]
The factor
\[
 \frac{e^{it \Omega(\vec{\xi})}-1}{\Omega(\vec{\xi})}
\]
is harmless whenever $|\Omega(\vec{\xi})|$ is not too small: for fixed $t$ it behaves like $t$ when
$\Omega(\vec{\xi})$ is small and like $1/\Omega(\vec{\xi})$ when $\Omega(\vec{\xi})$ is large. In each of our
counterexamples, we will choose frequencies so that $\Omega(\vec{\xi})$ is small but non-zero and
can be approximated by a quantity whose size we can control precisely.

The $H^s$-norm of $d_0^3\Phi_t(u_0,u_0,u_0)$ is thus
\begin{equation}\label{Hs-norm-d3Phi}
 \bigl\|d_0^3\Phi_t(u_0,u_0,u_0)\bigr\|_{H^s}^2
  = \int_{\R} \langle \xi\rangle^{2s}\,
     \bigl|\widehat{d_0^3\Phi_t(u_0,u_0,u_0)}(\xi)\bigr|^2\,d\xi.
\end{equation}
In the constructions below, we will take $u_0$ as a superposition of functions whose Fourier
transforms are indicator functions of short intervals in frequency space. This allows us to
evaluate~\eqref{Hs-norm-d3Phi} explicitly up to multiplicative constants and to detect
growth in the parameter $N$ that contradicts~\eqref{C3-bound}.

\subsection{Optimality of \texorpdfstring{$s\ge 2\beta + \frac{2-\alpha}{4}$}{s≥2β+(2−α)/4}}\label{subsec:opt-s}

We first show that the lower bound
\[
 s \;\ge\; 2\beta + \frac{2-\alpha}{4}
\]
is necessary for the existence of a $\mathcal C^3$ flow map at the origin. This corresponds
to the high--high-high to high interaction.

Let $N\in 2^{\N}$ be a large dyadic frequency and $0<\lambda\ll N$ a small parameter to be chosen
as a negative power of $N$. Define the intervals
\[
 I_1 := [N, N+\lambda],\qquad
 I_2 := [N-4\lambda, N-3\lambda],
\]
and set
\[
 \widehat{\phi_{1,N}}(\xi) := \widehat{\phi_{3,N}}(\xi)
   := \chi_{I_1}(\xi)\, N^{-s}\lambda^{-1/2},
 \qquad
 \widehat{\phi_{2,N}}(\xi) := \chi_{I_2}(\xi)\, N^{-s}\lambda^{-1/2}.
\]
We then take
\[
 u_0 := \phi_{1,N} + \phi_{2,N} + \phi_{3,N}.
\]
By construction,
\begin{align*}
 \|\phi_{j,N}\|_{H^s}^2
   &= \int_{\R} \langle \xi\rangle^{2s} |\widehat{\phi_{j,N}}(\xi)|^2\,d\xi
    \sim \langle N\rangle^{2s} N^{-2s}\lambda^{-1}\lambda
    \sim 1,
\end{align*}
uniformly in $N$, for $j=1,2,3$. Hence
\begin{equation}\label{u0-Hs-normalised}
 \|u_0\|_{H^s} \sim 1.
\end{equation}

If $\xi_1,\xi_3\in I_1$ and $\xi_2\in I_2$, then
\[
 \xi_1-\xi_2\in [3\lambda,5\lambda], \qquad
 \xi_3-\xi_2\in [3\lambda,5\lambda],
\]
and $\xi := \xi_1-\xi_2+\xi_3$ lies in an interval of length $\sim \lambda$ centred at frequency
$\sim N$. More precisely,
\[
 \xi \in [N+3\lambda, N+6\lambda].
\]
Thus the interaction
\[
 S(t_1)D_x^\beta \phi_{1,N}\,
 \overline{S(t_1)D_x^\beta \phi_{2,N}}\,
 S(t_1)D_x^\beta \phi_{3,N}
\]
produces an output Fourier support contained in $[N+3\lambda, N+6\lambda]$, and this support
is disjoint from the Fourier supports of all other possible cubic interactions formed from
$\phi_{1,N},\phi_{2,N},\phi_{3,N}$. Consequently, in the computation of $d_0^3\Phi_t(u_0,u_0,u_0)$, 
we may isolate the contribution of this specific interaction and ignore all others.

On the set $\xi_1,\xi_3\in I_1$, $\xi_2\in I_2$, we have
\[
 |\xi|\sim N,\qquad |\xi_j|\sim N,\quad j=1,2,3,
\]
and $|\xi_1-\xi_2|\sim|\xi_3-\xi_2|\sim \lambda\ll N$. A Taylor  theorem of $\omega(\xi)=|\xi|^\alpha$
around $\xi\sim N$ yields
\[
\begin{split}
 \omega(\xi_1) & = \omega(\xi) + \omega'(\xi)(\xi_1-\xi) + \frac12 (\xi_1-\xi)^2 \int_0^1 \omega'' (\xi + \theta (\xi_1 - \xi)) d\theta\\
 & = \omega(\xi) + \omega'(\xi)(\xi_1-\xi) + O(\lambda^2 N^{\alpha-2}),
\end{split}
\]
and similarly for $\omega(\xi_2)$ and $\omega(\xi_3)$. Using that
$ \xi_1-\xi_2+\xi_3 = \xi$,
one checks that the first-order terms in $(\xi_j-\xi)$ cancel in the expression $\Omega(\vec{\xi}) = \omega(\xi)-\omega(\xi_1)+\omega(\xi_2)-\omega(\xi_3)$,
and one is left with a second-order contribution of the form
\[
 |\Omega(\vec{\xi})| \sim \lambda^2 N^{\alpha-2}.
\]
Choosing
\[
 \lambda = N^{\frac{2-\alpha}{2}-\varepsilon} \ll N,\qquad \varepsilon>0 \text{ small},
\]
we obtain
\[
 |\Omega(\vec{\xi})| \sim N^{-2\varepsilon},
\]
uniformly in the interaction region.

For fixed $t>0$ and $N \gg 1$, we therefore have that the quantity
\[
 \left|\frac{e^{it \Omega(\vec{\xi})}-1}{\Omega(\vec{\xi})}\right| \sim |t|,
\]
up to an error of order $N^{-2\varepsilon}$ which can be absorbed
for large $N$.

Restricting~\eqref{Est-F1} to the specific interaction above and using the previous estimates,
we find, for $\xi$ in the output interval $[N+3\lambda,N+6\lambda]$,
\[
 \bigl|\mathcal F_x ( {d_0^3\Phi_t(u_0,u_0,u_0)})(\xi)\bigr|
  \gtrsim |t|\,
     |\xi|^\beta \, N^{3\beta}\, N^{-3s}\,\lambda^{-3/2}\, \lambda^2
 \sim |t|\, N^{4\beta-3s}\,\lambda^{1/2},
\]
where the factor $\lambda^2$ comes from the measure of the integration region in $(\xi_1,\xi_2)$,
and $\lambda^{-3/2}$ from the three factors $\lambda^{-1/2}$ in the amplitudes of
$\widehat{\phi_{j,N}}$. Since $\xi\sim N$ on this set, we also have $\langle\xi\rangle^s\sim N^s$.
Hence
\[
 \|d_0^3\Phi_t(u_0,u_0,u_0)\|_{H^s}^2
  \gtrsim |t|^2 \int_{N+3\lambda}^{N+6\lambda} N^{2s}\, N^{8\beta-6s}\,\lambda\, d\xi
 \sim |t|^2 N^{8\beta-4s}\,\lambda^2.
\]
Recalling that $\lambda = N^{\frac{2-\alpha}{2}-\varepsilon}$, we finally obtain
\begin{equation}\label{CounterEx2Norm}
 \|d_0^3\Phi_t(u_0,u_0,u_0)\|_{H^s}
  \gtrsim |t|\, N^{4\beta + \frac{2-\alpha}{2} - 2s - \varepsilon}.
\end{equation}

Since $\|u_0\|_{H^s}\sim 1$ by~\eqref{u0-Hs-normalised}, the $\mathcal C^3$ bound~\eqref{C3-bound} would
force the right-hand side of~\eqref{CounterEx2Norm} to remain bounded as $N\to\infty$.
This is impossible when
\[
 4\beta + \frac{2-\alpha}{2} - 2s >0
 \quad\Longleftrightarrow\quad
 s < 2\beta + \frac{2-\alpha}{4}.
\]
We conclude that if $s < 2\beta + \frac{2-\alpha}{4}$, the map $\Phi_t$ cannot be $\mathcal C^3$ at $0$,
which proves the optimality of the regularity threshold in Theorem~\ref{Th-ill-Line}.

\subsection{Optimality of \texorpdfstring{$\beta\le(\alpha-1)/2$}{β ≤ (α−1)/2}}\label{subsec:opt-beta-upper}

We now show that the condition
\[
 \beta \le \frac{\alpha-1}{2}
\]
is necessary. This corresponds to a high--low--low to high frequency interaction.

Let $N\in 2^{\N}$ be large and $0<\lambda\ll 1$ be a small parameter.
Define the intervals
\[
 I_1 := [N, N+\lambda],\qquad
 I_2 := [1+\lambda, 1+2\lambda],\qquad
 I_3 := [1+4\lambda, 1+5\lambda],
\]
and set
\[
 \widehat{\phi_{1,N}}(\xi) := \chi_{I_1}(\xi)\, N^{-s}\lambda^{-1/2},\qquad
 \widehat{\phi_{2,N}}(\xi) := \chi_{I_2}(\xi)\,\lambda^{-1/2},
 \qquad
 \widehat{\phi_{3,N}}(\xi) := \chi_{I_3}(\xi)\,\lambda^{-1/2}.
\]
Thus $\phi_{2,N}$ and $\phi_{3,N}$ are supported at frequencies of size $\sim 1$, while
$\phi_{1,N}$ is supported near $N$. As before, one checks easily that
\[
 \|\phi_{j,N}\|_{H^s} \sim 1,\qquad j=1,2,3,
\]
so that $u_0:=\phi_{1,N}+\phi_{2,N}+\phi_{3,N}$ satisfies $\|u_0\|_{H^s}\sim 1$.

If $\xi_1\in I_1$, $\xi_2\in I_2$ and $\xi_3\in I_3$, then
\[
 \xi_3-\xi_2 \in [2\lambda, 4\lambda], \quad \textup{ and } \quad
 \xi := \xi_1-\xi_2+\xi_3 \in [N+2\lambda, N+5\lambda].
\]
Thus the output frequency is again $\xi\sim N$ and the region of interaction in $(\xi_1,\xi_2,\xi_3)$
has measure $\sim \lambda^3$. A mean value theorem  
\[
|\xi_1|^\alpha - |\xi|^\alpha  = \al (\xi_1 - \xi) \int_0^1 |\xi + \theta(\xi_1 - \xi)|^{\al - 1} d\theta
\] 
yields
\[
 |\Omega(\vec{\xi})| \sim \lambda N^{\alpha-1}.
\]
Choosing $\lambda := N^{1-\alpha-\varepsilon}$, 
we have $|\Omega(\vec{\xi})|\sim N^{-\varepsilon}$. For fixed $t>0$, it follows again that
\[
 \left|\frac{e^{it \Omega(\vec{\xi})}-1}{\Omega(\vec{\xi})}\right|
 \sim |t|
\]
for large $N$.

Inserting this configuration into~\eqref{Est-F1} and arguing as before, we obtain on
$\xi\in[N+2\lambda,N+5\lambda]$,
\[
 \bigl|\mathcal F_x ( {d_0^3\Phi_t(u_0,u_0,u_0)}) (\xi) \bigr|
  \gtrsim |t|\, N^\beta \cdot N^\beta \cdot 1 \cdot \;
          N^{-s}\lambda^{-3/2}\,\lambda^2
  \sim |t|\, N^{2\beta-s}\lambda^{1/2},
\]
and hence
\begin{equation}\label{CounterEx3Norm}
 \|d_0^3\Phi_t(u_0,u_0,u_0)\|_{H^s}
  \gtrsim   |t|\, N^{2\beta}\lambda \sim |t|\, N^{1-\alpha + 2\beta -\varepsilon},
\end{equation}
with $\lambda = N^{1-\alpha-\varepsilon}$.
If $\beta > (\alpha-1)/2$, then the exponent on $N$ in~\eqref{CounterEx3Norm} is positive,
so the right-hand side diverges as $N\to\infty$, while $\|u_0\|_{H^s}\sim 1$ remains bounded.
This contradicts the $\mathcal C^3$ bound~\eqref{C3-bound}, and therefore the flow map cannot be $\mathcal C^3$
at the origin when $\beta>(\alpha-1)/2$. This proves the optimality of the upper bound
$\beta\le (\alpha-1)/2$.

\subsection{Optimality of \texorpdfstring{$\beta\ge -1/4$}{β ≥ −1/4}}\label{subsec:opt-beta-lower}

Finally, we show that $\beta\ge -\tfrac14$ is necessary. Here the relevant interaction is
low--low--low to low, in which the frequencies remain close to zero.

Let $N\in 2^{\N}$ be large and set
 $\lambda := N^{-1}$.
Define the interval
 $I := [2\lambda,3\lambda]$,
and let
\[
 \widehat{\phi_{1,N}}(\xi) = \widehat{\phi_{2,N}}(\xi) = \widehat{\phi_{3,N}}(\xi)
  := \chi_{I}(\xi)\,\lambda^{-1/2}.
\]
Thus each $\phi_{j,N}$ is supported in a small interval near the origin of length $\lambda$,
and its Fourier amplitude is of size $\lambda^{-1/2}$. A simple computation shows that
\[
 \|\phi_{j,N}\|_{H^s}^2
   = \int_I \langle\xi\rangle^{2s} \lambda^{-1}\,d\xi
   \sim \lambda^{-1}\lambda \sim 1,
\]
uniformly in $N$, since $|\xi|\lesssim\lambda\ll1$ on $I$. Hence again $\|u_0\|_{H^s}\sim1$ for
$u_0:=\phi_{1,N}+\phi_{2,N}+\phi_{3,N}$.

For $\xi_1,\xi_2,\xi_3\in I$, we have $|\xi_j|\sim\lambda$ and
\[
 \xi := \xi_1-\xi_2+\xi_3 \in [\lambda,4\lambda],
\]
so that the output frequency also satisfies $|\xi|\sim \lambda$.
Using the homogeneity of order $\alpha>1$ of $\omega(\xi)=|\xi|^{\alpha}$, we find $|\Omega(\vec{\xi})|
  = \bigl||\xi|^{\alpha}-|\xi_1|^{\alpha}+|\xi_2|^{\alpha}-|\xi_3|^{\alpha}\bigr|
  \lesssim \lambda^{\alpha}
  = N^{-\alpha}$.
In particular, for fixed $t>0$, the factor
\[
\left| \frac{e^{it\Omega(\vec{\xi})}-1}{\Omega(\vec{\xi})}\right| \sim |t|
\]
for large $N$.

On the interaction set, we have $|\xi|\sim\lambda$, $|\xi_j|\sim\lambda$ and
\[
 |\xi|^\beta \sim \lambda^\beta,\qquad |\xi_j|^\beta \sim \lambda^\beta.
\]
The support of the interaction in $(\xi_1,\xi_2)$ has measure $\sim \lambda^2$ and the amplitudes
contribute $\lambda^{-3/2}$. Hence from~\eqref{Est-F1} we obtain
\[
 \bigl| \mathcal F_x ({d_0^3\Phi_t(u_0,u_0,u_0)})(\xi)\bigr|
  \gtrsim |t|\,
    \lambda^\beta \cdot \lambda^{3\beta} \cdot \lambda^{-3/2} \cdot \lambda^2
  = |t|\,\lambda^{4\beta + \frac12},
\]
for $\xi$ in an interval of length $\sim\lambda$ with $|\xi|\sim\lambda$.
Since $\langle\xi\rangle^s\sim 1$ for such small $\xi$, we find
\[
 \|d_0^3\Phi_t(u_0,u_0,u_0)\|_{H^s}^2
  \gtrsim |t|^2 \int_{\lambda}^{4\lambda} \lambda^{2(4\beta+1/2)}\,d\xi
  \sim |t|^2 \lambda^{8\beta+1}\,\lambda,
\]
and so
\begin{equation}\label{CounterEx4Norm}
 \|d_0^3\Phi_t(u_0,u_0,u_0)\|_{H^s}
  \gtrsim |t|\,\lambda^{4\beta+1}
  = |t|\,N^{-(4\beta+1)}.
\end{equation}
If $\beta<-1/4$, then $4\beta+1<0$ and the right-hand side of~\eqref{CounterEx4Norm}
diverges as $N\to\infty$, whereas $\|u_0\|_{H^s}\sim1$ remains bounded. This again contradicts
the $\mathcal C^3$ bound~\eqref{C3-bound} and shows that one must require $\beta\ge -1/4$.

\subsection{Proof of Theorem~\ref{Th-ill-Line}}

Combining the three counterexamples above, we obtain:

\begin{itemize}
 \item[(i)] If $s < 2\beta + \frac{2-\alpha}{4}$, then the high--high--high to high example
   in Subsection~\ref{subsec:opt-s} shows that $d_0^3\Phi_t$ cannot satisfy~\eqref{C3-bound}.
 \item[(ii)] If $\beta > (\alpha-1)/2$, then the high--low--low to high example in
   Subsection~\ref{subsec:opt-beta-upper} violates~\eqref{C3-bound}.
 \item[(iii)] If $\beta < -1/4$, then the low--low--low to low example in
   Subsection~\ref{subsec:opt-beta-lower} also violates~\eqref{C3-bound}.
\end{itemize}

In each of these cases, the solution map $\Phi_t$ cannot be $\mathcal C^3$ at the origin from $H^s(\R)$
to itself. This proves Theorem~\ref{Th-ill-Line}.
\qedhere

\medskip

The first counterexample shows that, for $\beta>\frac18-\frac{\alpha}{4}$, the regularity
requirement
\[
 s \;\ge\; 2\beta + \frac{2-\alpha}{4},
\]
obtained in Remark~\ref{RMK:LWP}, is sharp. For $\beta\le\frac18-\frac{\alpha}{4}$,
the well-posedness threshold in Remark~\ref{RMK:LWP} is strictly above the scaling index,
but our current approach does not decide whether it is optimal. This is related to the need
to treat negative regularity exponents by a direct trilinear estimate rather than reducing
to the bilinear estimate via the simple inequality~\eqref{est-i1}.

The third counterexample shows that the contraction mapping argument in $H^s$ cannot
cover the case $\beta<-1/4$, but does not rule out the possibility of establishing local
well-posedness at the endpoint $\beta=-1/4$ by different methods. This endpoint corresponds,
in physical applications, to the turbulent regime of the original MMT model~\cite{MMT-97},
and it would be very interesting to determine whether a well-posedness theory can be developed
there.



\bigskip
\subsection*{Acknowledgment} The first author would like to thank FAPESP Brazil for financial support under grant (\#2024/10613-4) and the School of Mathematics, University of Birmingham, UK, for hospitality where this work was developed. Y.W. was supported by the EPSRC Mathematical Sciences Small Grant (grant no. UKRI1116).\\


\noindent
{\bf Conflict of interest statement.} 
On behalf of all authors, the corresponding author states that there is no conflict of interest.\\

\noindent 
{\bf Data availability statement.} 
The datasets generated during and/or analyzed during the current study are available from the corresponding author on reasonable request.


\end{document}